\pgfplotsset{compat=1.18}
\newcommand{\ls}{\leqslant}
\newcommand{\gr}{\geqslant}
\newcommand{\E}{\mathbb{E}}
\newcommand{\R}{\mathbb{R}}
\newcommand{\C}{\mathbb{C}}
\numberwithin{equation}{section} 
\newtheorem{theorem}{Theorem}[section] 
\newtheorem{lemma}[theorem]{Lemma}
\newtheorem{corollary}[theorem]{Corollary}
\newtheorem{proposition}[theorem]{Proposition}
\theoremstyle{remark}
\newtheorem{remark}[theorem]{Remark}
\newtheorem*{claim}{Claim}
\theoremstyle{definition} 
\newtheorem*{informaltheorem}{Theorem}
\newtheorem{conjecture}[theorem]{Conjecture}
\theoremstyle{definition}
\newtheorem{defn}[theorem]{Definition}
\title{Sharp inequalities for symmetric polynomials, Hunter’s conjecture, and moments of exponential random variables}
\author{Silouanos Brazitikos}
\author{Christos Pandis}
\address{Department of Mathematics \& Applied Mathematics, University of Crete, Voutes Campus, 70013 Heraklion, Greece}
\email{silouanb@uoc.gr, chrpandis@gmail.com}
\subjclass[2020]{Primary 05E05, 60E15, 26D15; Secondary 15A60, 52A40.}
\begin{document}

\maketitle
\begin{abstract}
We prove Hunter's conjecture on complete homogeneous symmetric polynomials.
For even $n$ and every integer $k\ge1$, we show that under the constraint
$\sum_{i=1}^n a_i^2=1$ the global minimum of the even-degree polynomial
$h_{2k}(a_1,\dots,a_n)$ is attained precisely at the half-plus/half-minus
vector and we compute the optimal value in closed form. The proof combines
algebraic properties of $h_{2k}$ with the probabilistic representation
$k!\,h_k(a)=\mathbb{E}(\sum_{i=1}^n a_iX_i)^k$, where
$X_1,\dots,X_n$ are i.i.d. standard exponential random variables with density
$e^{-x}\mathbf{1}_{x>0}$ and a combinatorial identity. This viewpoint further yields sharp upper and lower
bounds for $\mathbb{E}|\sum_{i=1}^n a_iX_i|^{q}$ under natural constraints on the coefficients, including the spherical
constraint $\sum a_i^2=1$ combined with the non-negative regime $a_i\ge0$, or the centred regime $\sum a_i=0$. Moreover, we determine the exact minimum of $h_{2k}$ on the $\ell_\infty$-sphere 
$S_\infty = \{a \in \mathbb{R}^n : \|a\|_\infty = 1\}$, which yields sharp norm 
comparison inequalities between the matrix norms induced by complete homogeneous 
symmetric polynomials and the classical operator and Schatten norms.
\end{abstract}

\tableofcontents
\section{Introduction}

Complete homogeneous symmetric (CHS) polynomials play a fundamental role in
algebraic combinatorics, representation theory, and the study of moment
inequalities in probability. Let $h_k$ denote the degree-$k$ CHS polynomial in
$n$ real variables $a_1,\dots,a_n$, defined by
\[
h_k(a_1,\dots,a_n)
= \sum_{1 \le i_1 \le \cdots \le i_k \le n} a_{i_1}\cdots a_{i_k}.
\]
We adopt the convention that $h_0(a)=1$. The interplay between the algebraic
structure of $h_k$ and the analytic properties of polynomials on $\R^n$ is
particularly transparent in low degrees. For instance,
\[
h_1(a)=\sum_{i=1}^n a_i, \qquad
h_2(a)=\sum_{i=1}^n a_i^2 + \sum_{1 \le i < j \le n} a_i a_j.
\]
Crucially, the quadratic polynomial admits the sum-of-squares decomposition
\[
h_2(a)
=\frac{1}{2}\sum_{i=1}^n a_i^2
+\frac{1}{2}\Big(\sum_{i=1}^n a_i\Big)^2.
\]
Consequently, $h_2$ is manifestly positive definite. It is natural to inquire
whether higher even-degree CHS polynomials enjoy similar positivity properties.
While positivity fails trivially for odd degrees (since
$h_{2k+1}(-a)=-h_{2k+1}(a)$), Hunter initiated the systematic study of this
phenomenon for even degrees in \cite{hunter1975some}. In \cite{hunter1977positive},
he established that for any integer $k \ge 1$,
\begin{equation}\label{Hunter og}
h_{2k}(a) \ge \frac{1}{k! \cdot 2^k} \left(\sum_{i=1}^n a_i^2\right)^k.
\end{equation}
Equality holds in \eqref{Hunter og} if and only if $k = 1$ and
$\sum_{i=1}^n a_i = 0$. Hunter's positivity theorem has since been revisited and re-proved by several authors using various techniques (see, e.g., \cite{Tao2017Schur, bouthat2023hunter}). 
More recently, Bouthat, Ch\'avez and Garcia \cite{bouthat2023hunter} developed a systematic probabilistic and operator-theoretic framework around Hunter's theorem, interpreting even-degree CHS polynomials as building blocks of ``random vector norms'' on spaces of matrices and surveying many of the existing proofs and generalizations.

In the same work \cite{hunter1977positive}, Hunter also \emph{conjectured} a substantially stronger
statement. Specifically, he conjectured that when $n$ is even, under the
normalization $\sum a_i^2=1$, the global minimum of $h_{2k}$ is attained at the
“half-plus/half-minus’’ vector
\[
\tilde{a}
=\Big(\underbrace{\tfrac{1}{\sqrt{n}}, \dots, \tfrac{1}{\sqrt{n}}}_{n/2},
\underbrace{-\tfrac{1}{\sqrt{n}}, \dots, -\tfrac{1}{\sqrt{n}}}_{n/2}\Big).
\]
Progress on this conjecture has been incremental. Baston \cite{baston1978two}
sharpened Hunter's original bound by adding a correction term depending on
$(\sum a_i)^{2k}$. More recently, Tao \cite{Tao2017Schur} demonstrated that each
$h_{2k}$ is Schur-convex on $\R^n$, which implies that if one fixes
$\sum a_i=1$, then the minimum is attained at the flat vector
$(1/n,\dots,1/n)$. However, the global minimization problem on the sphere
$\mathbb{S}^{n-1}$ has remained open.

A different line of work was recently initiated by Garcia and Vol\v{c}i\v{c} \cite{garcia2025noncommutative}, who introduced noncommutative complete homogeneous symmetric (NCHS) polynomials and proved a noncommutative Hunter-type theorem. They obtained optimal operator-valued lower bounds
\[
H_{2k}(X_1,\ldots,X_n) \succeq \mu_{n,k}\,(X_1^{2k}+\cdots+X_n^{2k})
\]
for Hermitian operators $X_i$, together with explicit sum-of-hermitian-squares representations. In the commutative scalar case this yields new inequalities of the form
\[
h_{2k}(a_1,\ldots,a_n) \ge \frac{\mu_{n,k}}{n^{k-1}} \,\|a\|_2^{2k},
\]
which improve Hunter's original constant whenever $d$ is sufficiently large compared to $n$. Nevertheless, the exact best lower bound for scalar CHS polynomials---that is, the optimal constant and the extremizing configurations on the Euclidean sphere---remained unknown in \cite{garcia2025noncommutative}.

\subsection{Probabilistic and geometric perspectives}

Beyond their algebraic utility, CHS polynomials possess an elegant probabilistic
representation, which will be central to our approach
(see, e.g., \cite{Tao2017Schur,bouthat2023hunter}). If
$X_1,\dots,X_n$ are independent and identically distributed (i.i.d.) standard
exponential random variables (with density $e^{-x}\mathbf{1}_{x>0}$), then for
every integer $k \ge 0$,
\begin{equation}\label{CHS-moment}
k! \, h_k(a) = \E\Big(\sum_{j=1}^n a_j X_j\Big)^k.
\end{equation}
This identity bridges algebraic combinatorics with the study of optimal moment
inequalities: the problem of minimizing $h_{2k}$ on the sphere becomes the
problem of minimizing the even moments of linear combinations of independent
exponential random variables under a fixed variance constraint.

This probabilistic framework also interfaces naturally with the geometry of
convex bodies. Moments of such sums are closely related to the volume of
hyperplane sections of the regular simplex $\Delta_n$. From the geometric
viewpoint (see, for example, \cite{webb1996central}), the constraint
$\sum a_i=0$ is precisely the condition that the hyperplane $a^\perp$ passes
through the centroid of the simplex, and questions about extremal moments in
the zero-sum regime are inherently linked to central sections of $\Delta_n$.

There are thus two particularly compelling reasons to single out the zero-sum
hyperplane
\[
\sum_{i=1}^n a_i = 0.
\]
Geometrically, as just noted, it encodes central sections of the simplex. From
the probabilistic viewpoint, if $X_1,\dots,X_n$ are i.i.d.\ standard exponentials
with $\E X_i=1$, then
\[
\E\Big(\sum_{i=1}^n a_iX_i\Big)
=\Big(\sum_{i=1}^n a_i\Big)\E X_1,
\]
so the linear form $\sum a_i X_i$ is centred if and only if $\sum a_i=0$.
Thus the zero-sum regime is simultaneously the natural geometric setting for
simplex slicing and the natural probabilistic setting for sharp moment
inequalities of centred exponential
distributions.

\subsection{Main contributions}

In this paper, we leverage the probabilistic perspective to provide a complete
description of the extremal behaviour of complete homogeneous symmetric
polynomials under natural constraints. Our analysis is organized into four
regimes, according to the structure of the coefficient vector
$a=(a_1,\dots,a_n)$.

\medskip
\noindent\textbf{1. The unconditional regime (Hunter's conjecture).}
Our first main result is an affirmative resolution of Hunter's conjecture for
all even degrees.

\begin{informaltheorem}[Informal version of Theorem~\ref{Hunter conj}]
Let $n$ be an even integer and $k \ge 1$. For any $a \in \R^n$ with
$\sum a_i^2=1$,
\[
h_{2k}(a) \ge h_{2k}(\tilde a),
\]
where $\tilde a$ is the half-plus/half-minus vector. The explicit value of this
minimum is given by
\[
h_{2k}(\tilde a)
=\frac{(n/2+k-1)!}{k!\,(n/2-1)!\,n^k},
\]
and equality holds if and only if $a$ is a permutation of $\tilde a$.
\end{informaltheorem}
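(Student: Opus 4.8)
The plan is to convert the spherical minimisation into a moment problem via \eqref{CHS-moment}, expand the resulting even moment through the moment–cumulant identity into a polynomial in the power sums $p_j(a)=\sum_i a_i^j$, and then minimise that polynomial by separating the (well-behaved) even power sums from the (problematic) odd ones.

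Set $S=\sum_{j=1}^n a_jX_j$ with $X_1,\dots,X_n$ i.i.d.\ standard exponential, so that $(2k)!\,h_{2k}(a)=\E S^{2k}$ and $\E e^{tS}=\prod_j(1-a_jt)^{-1}=\exp\!\big(\sum_{p\ge1}\tfrac{p_p(a)}{p}t^p\big)$; hence the cumulants of $S$ are $\kappa_p=(p-1)!\,p_p(a)$ and the constraint $\sum_i a_i^2=1$ is exactly $\kappa_2=p_2(a)=1$. The moment–cumulant formula (the combinatorial identity I would use) gives
\[
\E S^{2k}=\sum_{\pi}\prod_{B\in\pi}\kappa_{|B|}=\sum_{\lambda\,\vdash\,2k}c_\lambda\prod_{j\ge1}p_j(a)^{m_j(\lambda)},\qquad c_\lambda=\frac{(2k)!}{\prod_j j^{m_j(\lambda)}\,m_j(\lambda)!}>0,
\]
the first sum over set partitions $\pi$ of $\{1,\dots,2k\}$, the second over integer partitions $\lambda$ of $2k$ with $m_j(\lambda)$ the multiplicity of the part $j$. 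Equivalently, factor the generating function as $\prod_j(1-a_jt)^{-1}=E(t)\,\mathcal O(t)$, where $E(t)=\exp\!\big(\sum_{j\ge1}\tfrac{p_{2j}(a)}{2j}t^{2j}\big)=\prod_j(1-a_j^2t^2)^{-1/2}$ collects the even power sums (and so has nonnegative Taylor coefficients) and $\mathcal O(t)=\exp\!\big(tC(t^2)\big)$ with $C(s)=\sum_{j\ge0}\tfrac{p_{2j+1}(a)}{2j+1}s^j$ collects the odd ones. Since $E(t)$ is even in $t$, only the even part of $\mathcal O$—namely $\cosh(tC(t^2))$—contributes to the even-degree coefficients, so
\[
h_{2k}(a)=[t^{2k}]\big(E(t)\cosh(tC(t^2))\big)=[t^{2k}]E(t)\;+\;[t^{2k}]\big(E(t)\,(\cosh(tC(t^2))-1)\big).
\]

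Now I would bound the two pieces. For the first, $[t^{2k}]E(t)=(2k)!^{-1}\sum_{\lambda\ \text{all parts even}}c_\lambda\prod_j p_{2j}(a)^{m_j}$, and the iterated Cauchy–Schwarz (power-mean) inequality $p_{2j}(a)=\sum_i a_i^{2j}\ge(\sum_i a_i^2)^j/n^{\,j-1}=n^{1-j}$—with equality iff $|a_i|=n^{-1/2}$ for all $i$—together with $c_\lambda>0$ gives $[t^{2k}]E(t)\ge h_{2k}(\tilde a)$. For the second piece one must prove that $[t^{2k}]\big(E(t)(\cosh(tC(t^2))-1)\big)\ge0$ for every $a\in\R^n$; since $\cosh x-1=\sum_{l\ge1}x^{2l}/(2l)!$, after the substitution $s=t^2$ this reads
\[
\sum_{l=1}^{k}\frac{1}{(2l)!}\,[s^{k-l}]\Big(\textstyle\prod_j(1-a_j^2s)^{-1/2}\;C(s)^{2l}\Big)\ \ge\ 0.
\]
This is the technical heart of the proof and the step I expect to be the main obstacle: $\prod_j(1-a_j^2s)^{-1/2}$ has nonnegative coefficients, but $C(s)^{2l}=(C(s)^l)^2$ is only a square and may have negative coefficients, so positivity alone does not suffice. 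The argument has to use the inequalities that a genuine power-sum vector must satisfy—Cauchy–Schwarz bounds like $p_{2r+1}(a)^2\le p_2(a)\,p_{4r}(a)$ and Newton-type inequalities—to show that the positive slack already present in the first (even) piece more than compensates for any negative contributions here. (In the centred regime $\sum_i a_i=0$ one has $p_1(a)=0$, so $C(s)$ is divisible by $s$ and the relevant degrees drop, making this step substantially cleaner; it may therefore be advantageous to first reduce the spherical minimisation to the hyperplane $\sum_i a_i=0$, e.g.\ via Tao's Schur-convexity of $h_{2k}$ combined with the shift identity $h_{2k}\big(\tfrac{\sigma}{n}\mathbf 1+b\big)=\sum_{m=0}^{2k}\binom{n+2k-1}{2k-m}\big(\tfrac{\sigma}{n}\big)^{2k-m}h_m(b)$, and then run the above on that hyperplane.)

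Finally I would pin down the minimum value and the equality cases. At $\tilde a$ every odd power sum vanishes (so $C\equiv0$) and $p_{2j}(\tilde a)=n^{1-j}$, hence $\sum_{k\ge0}h_{2k}(\tilde a)t^{2k}=\prod_i(1-\tilde a_i^{\,2}t^2)^{-1}=(1-t^2/n)^{-n/2}$ and therefore $h_{2k}(\tilde a)=\binom{n/2+k-1}{k}n^{-k}=\frac{(n/2+k-1)!}{k!\,(n/2-1)!\,n^k}$. For equality: the first piece equals $h_{2k}(\tilde a)$ only if $p_4(a)=n^{-1}$ (for $k\ge2$ the all-even partition $4+2+\cdots+2$ of $2k$ contributes a term $\propto p_4(a)$; the case $k=1$ is Hunter's theorem), which forces $|a_i|=n^{-1/2}$ for all $i$; writing $a=n^{-1/2}\varepsilon$ with $\varepsilon\in\{\pm1\}^n$ turns $C(s)$ into the integer $\#\{i:\varepsilon_i=1\}-\#\{i:\varepsilon_i=-1\}$ times $n^{-1/2}$ and a series with nonnegative coefficients, so the second piece is strictly positive unless the two sign classes have equal size—which, $n$ being even, happens exactly when $a$ is a permutation of $\tilde a$.
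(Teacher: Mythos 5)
Your decomposition of $h_{2k}(a)=[t^{2k}]E(t)+[t^{2k}]\big(E(t)(\cosh(tC(t^2))-1)\big)$ and the bound $[t^{2k}]E(t)\ge h_{2k}(\tilde a)$ are both correct (the latter follows from $p_{2j}(a)\ge n^{1-j}$ and the nonnegativity of the coefficients in $\exp\!\big(\sum_m p_{2m}s^m/(2m)\big)$), and your computation of $h_{2k}(\tilde a)$ via $\prod_i(1-\tilde a_i^2t^2)^{-1}=(1-t^2/n)^{-n/2}$ is fine. But the step you flag as the ``technical heart''---showing $[t^{2k}]\big(E(t)(\cosh(tC(t^2))-1)\big)\ge0$---is simply \emph{false}, even for even $n$. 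Take $n=4$, $k=2$, $a=(1,1,1,-2.9)/\sqrt{11.41}$, so $p_2=1$. Then $p_1\approx0.0296$, $p_3\approx-0.5551$, and
\[
h_4(a)-[t^4]E(t)=\tfrac{p_1^4}{24}+\tfrac{p_1^2p_2}{4}+\tfrac{p_1p_3}{3}\approx-0.0053<0,
\]
so the odd piece is negative here (one verifies $h_4(a)\approx0.261$ while $[t^4]E\approx0.267$; Hunter still holds since $h_4(\tilde a)=3/16=0.1875$). Your fallback phrasing---that the ``positive slack'' in the even piece must ``more than compensate''---is not a proof strategy but a restatement of the theorem: you give no mechanism for transferring slack from one piece to the other, and the Cauchy--Schwarz/Newton inequalities you cite do not obviously accomplish this, since $p_1$ and $p_3$ can have opposite signs with $|p_3|$ much larger than $p_1$.

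The paper avoids precisely this difficulty. It first uses Lagrange multipliers together with the identity $\partial_{x_i}h_{2k}=h_{2k-1}(x,x_i)$ and Hunter's positivity to show that any minimiser on $\mathbb{S}^{n-1}$ takes at most three distinct values (Proposition~\ref{Lag integers unconditional}), then runs an induction on $k$ with the $h_4$ case as base (Proposition~\ref{Hunter h_4}). Within the induction, the power-sum comparison you attempt is used---but only in the sub-case $\gamma_1\ge\gamma_2+\gamma_3$, where it genuinely works because all \emph{odd} power sums of $\boldsymbol{x}$ are then nonnegative and thus automatically dominate those of $\tilde a$ (which vanish). The complementary sub-case $\gamma_1\le\gamma_2+\gamma_3$ is handled by an entirely different mechanism: the Lagrange relation $h_{2k}(\boldsymbol{x})=h_{2k-2}(\boldsymbol{x},a,b)/(2k)$ combined with the bound $a^2+b^2\ge 2/n$ established in the $h_4$ analysis, which feeds the inductive hypothesis. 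Your approach collapses these two regimes into one and applies the power-sum comparison universally, which is exactly where it breaks. To salvage your plan you would need, at minimum, a structural reduction (such as the Lagrange-multiplier classification or the Schur-convexity shift you mention in passing) to restrict to coefficient vectors whose odd power sums are controlled---at which point you would essentially be reproducing the paper's case split.
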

On the upper side, it is clear that the maximizers of $h_{2k}$ under the
constraint $\sum a_i^2=1$ coincide with those in the non-negative regime,
which we describe in detail below.

\medskip
\noindent\textbf{2. The non-negative regime.}
We next analyze the behaviour of moments and CHS polynomials when the
coefficients are constrained to be non-negative, $a_i \ge 0$, with
$\sum a_i^2=1$. For small degrees, we use Schur-type arguments to show that for
$k\le4$ the map
\[
(x_1,\dots,x_n)\mapsto \E\Big(\sum_{j=1}^n \sqrt{x_j}\,X_j\Big)^k
\]
is Schur-concave on $\R_+^n$, yielding sharp two-sided bounds for $h_k$ in terms
of extremal vectors with either one nonzero coordinate or all coordinates
equal. Schur-concavity (or convexity) breaks for $k>4$, thus for higher degrees we turn to an explicit interpolation formula for
$\E\big(\sum a_i X_i\big)^k$ in terms of the coefficients $a_i$ and
Gamma functions. A detailed analysis of this formula shows that, for each fixed
integer $k$ and under the constraint $\sum a_i^2=1$, every minimizer among
non-negative vectors has a very rigid structure: it is supported on a subset of
coordinates on which all entries are equal, and all remaining coordinates are
zero. In other words, all minimizers are of the form
\[
(a_1,\dots,a_n) = (\underbrace{t,\dots,t}_{m\ \text{times}},0,\dots,0),
\]
for some $m\in\{1,\dots,n\}$ and $t>0$ determined by the normalization. The
optimal support size $m$ is characterized via a one-dimensional function. Dually, we prove that all maximizers in the non-negative
regime are vectors with exactly $n-1$ equal coordinates, that is, of the form
\[
(a_1,\dots,a_n) = (s,\underbrace{t,\dots,t}_{n-1\ \text{times}}),
\]
with $t\leq s$ and $(s,t)$ explicitly determined by $k$ and $n$ as a root of an explicit polynomial. These maximizers are, of course, maximizers for the unconditional regime. 

\medskip
\noindent\textbf{3. The centred (geometric) regime.}
Motivated by the simplex slicing problem and the probabilistic setting of
centred random variables, we analyze in detail the case where
\[
\sum_{i=1}^n a_i = 0, \qquad \sum_{i=1}^n a_i^2 = 1.
\]
We prove that for even $n$ the lower bound for $h_{2k}$ under the zero-sum
constraint coincides with the unconditional Hunter bound, and is again attained
at the half-plus/half-minus vector. We also determine the exact maximizers
under the same constraint, which turn out to be vectors with $n-1$ equal
coordinates and one opposite coordinate.

When dealing with the sum of three exponential random variables, we combine these ideas with Fourier-analytic formulas for
moments to obtain sharp upper and lower bounds for nearly all exponents
$q\in(-1,\infty)$. This leads to a complete description of the extremizers for
$\E|\sum a_i X_i|^q$ under the zero-sum constraint.

\medskip
\noindent\textbf{4. Matrix-norm inequalities.}
A further motivation for our work comes from unitarily invariant norms on matrices
induced by complete homogeneous symmetric polynomials. Following
Aguilar, Ch\'avez, Garcia and Vol\v{c}i\v{c} \cite{AguilarKonrad}, given
$A\in M_n(\C)$ with singular values $s_1(A)\ge\cdots\ge s_n(A)\ge0$ and an even
integer $d=2k$, one can define the CHS–norm
\[
\|A\|_{H_d}:=h_d\big(s_1(A),\ldots,s_n(A)\big)^{1/d}.
\]
These norms interpolate between classical Schatten norms, and the authors proved
two-sided comparisons with the operator norm $\|\cdot\|_{\mathrm{op}}$;
see in particular \cite[Theorem~38]{AguilarKonrad}. 
The dependence of their constants on $d$ and $n$ is not optimal for the lower bound, and the
authors explicitly asked for the sharp form of such inequalities.
Our results in Section~6 answer this question, see Theorem~\ref{thm:min-h-2k-linfty}, and lead to the
optimal order of the best constant in the comparison between
$\|\cdot\|_{H_d}$ and $\|\cdot\|_{\mathrm{op}}$.

\subsection*{Organization of the paper}

The rest of the paper is organized as follows: In Section~2, we collect some preliminaries and further develop the necessary background, focusing on Schur-convexity and majorization, which are central to the properties of the complete homogeneous symmetric  polynomials and the Fourier-analytic formulas for moments. In Section~3, we provide an affirmative answer to Hunter's conjecture. Section~4 is devoted to the case of non-negative coefficients. Section~5 addresses the centred case and in Section~6 we study the minimisation of complete homogeneous symmetric polynomials under the constraint $\|a\|_\infty = 1$.

\section{Preliminaries and Background}\label{sec:prelim}

\subsection{Schur-convexity and majorization}

Schur-convexity-type arguments have recently appeared in probabilistic settings
(see, for example, \cite{chasapis2024haagerup,chasapis2023entropies}), leading to
sharp results ranging from moment comparison inequalities to entropy inequalities.
For a concise exposition on majorization and Schur-convexity, we refer to
Chapter~II of \cite{bhatia2013matrix}. We recall here the basic notions that
will be used throughout the paper.

\begin{defn}[Decreasing rearrangement]
Given $x = (x_1, \ldots, x_n)\in\R^n$, we denote by
$x^{\ast} = (x_1^{\ast}, \ldots, x_n^{\ast})$ its decreasing rearrangement, i.e.
\[
x_1^{\ast} \ge x_2^{\ast} \ge \cdots \ge x_n^{\ast}.
\]
\end{defn}

\begin{defn}[Majorization]
For any two vectors $x, y \in \mathbb{R}^n$, we say that $x$ is majorized by $y$,
and write $x \prec y$, if
\[
\sum_{i=1}^n x_i = \sum_{i=1}^n y_i
\quad \text{and} \quad
\sum_{i=1}^k x_i^{\ast} \le \sum_{i=1}^k y_i^{\ast}
\quad \text{for every } k = 1, 2, \ldots, n.
\]
\end{defn}

As a direct consequence, for every vector $a = (a_1, \ldots, a_n) \in \mathbb{R}^n_+$
such that $\sum_{i=1}^n a_i = 1$, we have
\[
\left( \frac{1}{n}, \ldots, \frac{1}{n} \right)
\prec (a_1, \ldots, a_n)
\prec (1, 0, \ldots, 0).
\]
More specifically, if $\sum_{i=1}^n a_i^2 = 1$, then
\begin{equation}\label{maj seq}
\left( \frac{1}{n}, \ldots, \frac{1}{n} \right)
\prec (a_1^2, \ldots, a_n^2)
\prec (1, 0, \ldots, 0).
\end{equation}

\begin{defn}[Schur-convexity/concavity]
A function $f : \mathbb{R}^n \to \mathbb{R}$ is said to be Schur-convex (resp.
Schur-concave) if $x \prec y$ implies $f(x) \le f(y)$ (resp. $f(x) \ge f(y)$).
\end{defn}

A central criterion for establishing the Schur-convexity or Schur-concavity of
a function is due to Schur and Ostrowski.

\begin{theorem}[Schur--Ostrowski]\label{thm:Schur-Ostrowski}
Let $f:\mathbb{R}^n\rightarrow \mathbb{R}$ be a symmetric function with
continuous partial derivatives. Then $f$ is Schur-convex (resp. Schur-concave)
if and only if
\[
(x_i - x_j) \left( \frac{\partial f}{\partial x_i} -
\frac{\partial f}{\partial x_j} \right) \ge 0
\quad (\text{resp. } \le 0)
\]
for all $x \in \mathbb{R}^n$ and for all $1 \le i, j \le n$.
\end{theorem}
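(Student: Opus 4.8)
The plan is to establish both implications by mass-transfer (``Robin Hood'') arguments along explicit one-parameter paths, invoking for the sufficiency direction the classical fact that majorisation is generated by $T$-transforms. Write $\partial_i f = \partial f/\partial x_i$, and for distinct indices $i,j$ and a parameter $t$ let $x^{(t)}$ denote the vector that agrees with $x$ except that its $i$-th coordinate is $x_i-t$ and its $j$-th coordinate is $x_j+t$. Since $f$ is $C^1$ on $\R^n$, the map $t\mapsto f(x^{(t)})$ is differentiable with derivative $-\partial_i f(x^{(t)})+\partial_j f(x^{(t)})$. The one structural remark I would use repeatedly is that if $x_i>x_j$ and $0\le t\le (x_i-x_j)/2$, then $x^{(t)}=Tx$ for the doubly stochastic matrix $T=(1-\alpha)I+\alpha Q_{ij}$ with $\alpha=t/(x_i-x_j)$, where $Q_{ij}$ is the transposition of the two coordinates; consequently $x^{(t)}\prec x$ by the Hardy--Littlewood--P\'olya theorem.

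For necessity, I would assume $f$ Schur-convex, fix $x$ and indices $i,j$, and note that when $x_i=x_j$ the asserted inequality is trivial, so we may assume (by symmetry of $f$) that $x_i>x_j$. For small $t>0$ we then have $x^{(t)}\prec x$, hence $f(x^{(t)})\le f(x)=f(x^{(0)})$, and differentiating at $t=0^+$ gives $-\partial_i f(x)+\partial_j f(x)\le 0$, which is precisely the required inequality $(x_i-x_j)(\partial_i f(x)-\partial_j f(x))\ge 0$. Since $x$ is arbitrary this holds on all of $\R^n$, and the Schur-concave statement follows by reversing every inequality.

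For sufficiency, I would assume the differential condition and take $x\prec y$. Using the standard decomposition of a majorisation relation into finitely many $T$-transforms (Chapter~II of \cite{bhatia2013matrix}), together with the symmetry of $f$, it suffices to prove $f(x)\le f(y)$ in the elementary case where $x$ and $y$ differ only in two coordinates $i,j$ with $y_i\ge y_j$ and $x=y^{(s)}$ for some $0\le s\le (y_i-y_j)/2$ --- if a single $T$-transform overshoots the midpoint one first applies $Q_{ij}$ to $x$, which alters neither $f(x)$ nor the relation $x\prec y$ and replaces $s$ by $(y_i-y_j)-s$. Along the path $t\mapsto y^{(t)}$, $t\in[0,s]$, the $i$-th coordinate $y_i-t$ never drops below the $j$-th coordinate $y_j+t$, so the hypothesis yields $\partial_i f(y^{(t)})-\partial_j f(y^{(t)})\ge 0$ and hence $\tfrac{d}{dt}f(y^{(t)})\le 0$; integrating over $[0,s]$ gives $f(x)=f(y^{(s)})\le f(y^{(0)})=f(y)$, and chaining over the finitely many $T$-transforms completes the argument. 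Once more, the Schur-concave case is obtained by reversing all inequalities.

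I do not expect a serious obstacle here: modulo the structural lemma on $T$-transforms, the whole proof is the fundamental theorem of calculus applied to the transfer path, combined with the elementary observation that a transfer stopping at the midpoint preserves the order of the two affected coordinates (which is exactly what lets the differential hypothesis be applied uniformly along the path). If a fully self-contained treatment were desired, the $T$-transform lemma itself can be proved by induction on the number of coordinates in which $x$ and $y$ differ, at each step moving mass between an over-large coordinate of $y$ and an over-small one until one of them meets the corresponding coordinate of $x$; but it is cleaner to quote it.
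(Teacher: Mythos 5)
Your proof is correct and is the standard argument for the Schur--Ostrowski criterion. The paper itself does not prove Theorem~\ref{thm:Schur-Ostrowski} --- it is quoted as a classical preliminary with a pointer to Chapter~II of \cite{bhatia2013matrix} --- so there is no in-text proof to compare against; your argument is precisely the one that reference supplies. Both directions are sound: necessity differentiates $f$ along a mass-transfer path at $t=0^+$ after noting $x^{(t)}\prec x$ for small $t>0$, and sufficiency reduces to a single $T$-transform, reflects through the transposition $Q_{ij}$ when the transform overshoots the midpoint (so that the two moving coordinates never cross), and integrates the non-positive derivative of $t\mapsto f(y^{(t)})$. The one point you leave implicit is the endpoint $t=(y_i-y_j)/2$ where the two coordinates coincide and the hypothesis reads $0\ge 0$; there the symmetry of $f$ forces $\partial_i f=\partial_j f$, so the path derivative vanishes and the integral bound is unaffected --- worth a word, but not a gap.
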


\subsection{Complete homogeneous symmetric polynomials}

In the Introduction we defined the complete homogeneous symmetric polynomial $h_k(a_1,\ldots,a_n)$ by
\[
h_k(a_1,a_2,\dots,a_n)
:=\sum_{1\leq i_1 \leq i_2 \leq \cdots \leq i_k\le n}
a_{i_1}a_{i_2}\dots a_{i_k}.
\]
One can also define all the complete homogeneous symmetric polynomials of $n$
variables simultaneously by means of the generating function:
\begin{equation}\label{CHS by mgf means}
\sum_{k=0}^{\infty}h_k(a_1,a_2,\dots,a_n)t^k
= \frac{1}{(1-ta_1)(1-ta_2)\dots(1-ta_n)}.
\end{equation}
As a direct consequence of the generating function representation, we obtain the
following two important properties.

\begin{lemma}[Lemmas~1 and 2 in \cite{hunter1977positive}]\label{Hunter lemma}
If $a\neq b$, then 
\begin{equation}\label{CHS diff}
    h_{k-1}(x,a)-h_{k-1}(x,b)=(a-b)h_{k-2}(x,a,b),
\end{equation}
and
\begin{equation}\label{CHS der}
   \frac{\partial}{\partial x_i}h_{k}(x)=h_{k-1}(x,x_i)
\end{equation}
for every $k\ge1$.
\end{lemma}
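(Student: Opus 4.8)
The plan is to read off both identities directly from the generating-function formula \eqref{CHS by mgf means}, exploiting the observation that adjoining one extra variable $c$ to the list $(x_1,\dots,x_n)$ merely multiplies the generating series by the linear factor $(1-tc)^{-1}$. Throughout, write $F(t)=\prod_{i=1}^{n}(1-tx_i)^{-1}$ for the generating series in the variables $x=(x_1,\dots,x_n)$, so that $\sum_{k\ge0}h_k(x)\,t^k=F(t)$, and note that $\sum_{j\ge0}h_j(x,c)\,t^j=F(t)/(1-tc)$ and $\sum_{j\ge0}h_j(x,c,c')\,t^j=F(t)/\bigl((1-tc)(1-tc')\bigr)$.

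For \eqref{CHS diff}, I would subtract the generating series attached to $a$ and to $b$ and use $(1-tb)-(1-ta)=t(a-b)$:
\begin{align*}
\sum_{j\ge0}\bigl(h_j(x,a)-h_j(x,b)\bigr)t^j
&=F(t)\left(\frac{1}{1-ta}-\frac{1}{1-tb}\right)\\
&=\frac{t(a-b)\,F(t)}{(1-ta)(1-tb)}
= t(a-b)\sum_{j\ge0}h_j(x,a,b)\,t^j.
\end{align*}
Comparing the coefficients of $t^{k-1}$ on the two sides yields $h_{k-1}(x,a)-h_{k-1}(x,b)=(a-b)h_{k-2}(x,a,b)$; with the convention $h_j\equiv0$ for $j<0$ the identity is valid in the boundary case $k=1$ as well.

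For \eqref{CHS der}, I would differentiate $\sum_{k\ge0}h_k(x)\,t^k=F(t)$ with respect to $x_i$. Since $\frac{\partial}{\partial x_i}(1-tx_i)^{-1}=t(1-tx_i)^{-2}$, the right-hand side becomes $t\,F(t)/(1-tx_i)$, which is exactly $t$ times the generating series in the list $x$ with one additional copy of $x_i$ appended; matching the coefficient of $t^k$ gives $\frac{\partial}{\partial x_i}h_k(x)=h_{k-1}(x,x_i)$. Alternatively, \eqref{CHS der} can be deduced from \eqref{CHS diff}: apply it with $x$ replaced by the reduced list $(x_1,\dots,x_{i-1},x_{i+1},\dots,x_n)$, with $a=x_i$ and $b$ a free parameter, divide by $x_i-b$, and let $b\to x_i$, the left side converging to $\frac{\partial}{\partial x_i}h_k(x)$ and the right side to $h_{k-1}(x,x_i)$.

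There is no genuine obstacle in this argument: both statements are formal consequences of \eqref{CHS by mgf means} together with a comparison of power-series coefficients, which is exactly the ``direct consequence'' the text anticipates. The only points that merit an explicit line are the interpretation of ``adjoining a variable'' as multiplication by $(1-tc)^{-1}$, which is immediate from the product form of \eqref{CHS by mgf means}, and bookkeeping the convention $h_j\equiv0$ for negative $j$ so that the coefficient identifications remain correct in the lowest degrees.
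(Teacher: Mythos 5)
Your argument is correct and is precisely the fleshing out of what the paper merely asserts: it attributes the lemma to Hunter and remarks that both identities are ``a direct consequence of the generating function representation'' \eqref{CHS by mgf means}, without spelling out the coefficient comparison. Your derivation of both \eqref{CHS diff} and \eqref{CHS der} (including the alternative route to \eqref{CHS der} via a limit in \eqref{CHS diff}) matches this intended approach.
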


Another well-known formula for CHS polynomials is the Lagrange interpolation
formula
\begin{equation}\label{lagr_pol}
h_k(x_1, \ldots, x_n)
= \sum_{i=1}^n \frac{x_i^{n+k - 1}}{\prod_{j\neq i} (x_i - x_j)}.
\end{equation}

We also recall the probabilistic representation already used in the
introduction. Let $X_1, \ldots, X_n$ be i.i.d.\ standard exponential random
variables. Then for any $k \in \mathbb{N}$ we have
\begin{align*}
\mathbb{E}\Big(a_1X_1 + \cdots + a_nX_n\Big)^k
&= \mathbb{E} \left( \sum_{m_1 + \cdots + m_n = k}
\frac{k!}{m_1! \cdots m_n!} a_1^{m_1} \cdots a_n^{m_n} X_1^{m_1} \cdots X_n^{m_n} \right) \\
&= k! \sum_{m_1 + \cdots + m_n = k}
\frac{\mathbb{E}(X_1^{m_1}) \cdots \mathbb{E}(X_n^{m_n})}{m_1! \cdots m_n!}
a_1^{m_1} \cdots a_n^{m_n} \\
&= k! \cdot h_k(a_1, \ldots, a_n),
\end{align*}
where in the last step we used the definition of $h_k$ and the moment identity
$\E(X_i^{m_i})=m_i!$. This is exactly the representation
\eqref{CHS-moment}.

In \cite{hunter1977positive}, Hunter was the first to show that even-degree CHS
polynomials are positive definite.

\begin{theorem}[Hunter]\label{thm:hunter-positivity}
Let $n,k$ be non-negative integers. Then $h_{2k}(x_1,\ldots,x_n)$ is a positive
definite function on $\mathbb{R}^n$, i.e.\ $h_{2k}(x_1,\ldots,x_n)>0$ for all
$x\neq0$.
\end{theorem}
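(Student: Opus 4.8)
The plan is to read the positivity of $h_{2k}$ straight off the probabilistic identity \eqref{CHS-moment}, which was derived above by a purely combinatorial manipulation of moments and makes no reference to any positivity statement, so there is no circularity. Fix $x\in\R^n\setminus\{0\}$ and put $Y:=\sum_{j=1}^n x_jX_j$, where $X_1,\dots,X_n$ are i.i.d.\ standard exponentials. Since the $X_j$ have finite moments of all orders, so does $Y$, and \eqref{CHS-moment} applied with $2k$ in place of $k$ gives $(2k)!\,h_{2k}(x)=\E Y^{2k}\ge0$. The only thing left is to exclude equality.

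Suppose $\E Y^{2k}=0$. Then $Y^{2k}=0$ almost surely, hence $Y=0$ almost surely, and in particular $\Var(Y)=0$. But $X_1,\dots,X_n$ are independent with $\Var(X_j)=1$, so $\Var(Y)=\sum_{j=1}^n x_j^2>0$ because $x\neq0$, a contradiction. Therefore $\E Y^{2k}>0$, i.e.\ $h_{2k}(x)>0$; together with the trivial case $h_0\equiv1$ this proves the theorem. There is essentially no serious obstacle in this route — the entire content sits in the moment identity, which is available to us — so the only point requiring care is the observation just made that \eqref{CHS-moment} is itself proved without invoking positivity.

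For completeness I would also record Hunter's original variational argument \cite{hunter1977positive}, which is the one we later refine to pin down the extremizers. Argue by induction on $k$, uniformly in the number of variables, the base case $k=1$ following from the identity $h_2(x)=\tfrac12\sum_i x_i^2+\tfrac12(\sum_i x_i)^2$. For the inductive step, $h_{2k}$ is continuous and $2k$-homogeneous, so it attains a minimum on $\{\|x\|_2=1\}$ at some $x^\ast$. If all coordinates of $x^\ast$ agree, then $x^\ast=\pm n^{-1/2}(1,\dots,1)$ and $h_{2k}(x^\ast)=n^{-k}h_{2k}(1,\dots,1)>0$. Otherwise pick $i\neq j$ with $x_i^\ast\neq x_j^\ast$; writing $\lambda$ for the Lagrange multiplier, the stationarity conditions $\partial_i h_{2k}(x^\ast)=\lambda x_i^\ast$ together with \eqref{CHS der} and \eqref{CHS diff} give
\[
\lambda\,(x_i^\ast-x_j^\ast)=h_{2k-1}(x^\ast,x_i^\ast)-h_{2k-1}(x^\ast,x_j^\ast)=(x_i^\ast-x_j^\ast)\,h_{2k-2}(x^\ast,x_i^\ast,x_j^\ast),
\]
so $\lambda=h_{2k-2}(x^\ast,x_i^\ast,x_j^\ast)>0$ by the inductive hypothesis applied in $n+2$ variables, the argument vector being nonzero since $\|x^\ast\|_2=1$. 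Finally Euler's relation for homogeneous functions yields $2k\,h_{2k}(x^\ast)=\sum_i x_i^\ast\,\partial_i h_{2k}(x^\ast)=\lambda\|x^\ast\|_2^2=\lambda>0$, so the minimum of $h_{2k}$ on the sphere is positive and, by homogeneity, $h_{2k}>0$ on $\R^n\setminus\{0\}$.
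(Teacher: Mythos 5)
Both of your arguments are correct. Note first that the paper itself does not prove Theorem~\ref{thm:hunter-positivity} — it states it as Hunter's theorem and cites \cite{hunter1977positive} (and, for alternative proofs, \cite{bouthat2023hunter}). So there is no ``paper proof'' to match against; rather, both of your routes are valid, and it is worth saying which one fits the paper better.

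Your probabilistic argument is sound: the derivation of $(2k)!\,h_{2k}(x)=\E\big(\sum_j x_jX_j\big)^{2k}$ in Section~\ref{sec:prelim} is a direct multinomial expansion plus $\E X_j^{m}=m!$, valid for arbitrary real $x$, and makes no appeal to any positivity property, so there is indeed no circularity. The passage from $\E Y^{2k}=0$ to $Y=0$ a.s.\ (via $Y^{2k}\ge0$) and then to $\Var(Y)=\sum_j x_j^2=0$ (using independence and $\Var(X_j)=1$) correctly forces $x=0$, and the trivial case $h_0\equiv1$ is dispatched. This is the route most in the spirit of the paper, which takes \eqref{CHS-moment} as its engine throughout; it is essentially the probabilistic proof the paper attributes to the literature surveyed in \cite{bouthat2023hunter}. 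Your second, variational argument is also correct, including the slightly delicate point that the induction is on $k$ uniformly in the number of variables, so that $h_{2(k-1)}$ in $n+2$ variables is available; this is Hunter's original argument and, notably, it is exactly the template the paper later sharpens into Propositions~\ref{Lag integers unconditional}, \ref{Lagrange integers positive} and the proof of Theorem~\ref{Hunter conj}. The only small thing I would tighten in the variational version is that the base case $k=1$ needs the further observation that the SOS form $\frac12\sum_i x_i^2+\frac12(\sum_i x_i)^2$ vanishes only at $x=0$, which is immediate but should be said.
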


Tao established the positive definiteness and Schur-convexity of the CHS polynomials in
\cite{Tao2017Schur}.

\begin{theorem}[Tao]\label{thm:Tao}
Let $n,k$ be non-negative integers. Then, for any $x \in \mathbb{R}^n$, the
following hold.
\begin{enumerate}
    \item[(i)] \textbf{Positive definiteness:}
    $h_{2k}(x)\ge0$, with equality if and only if $x=0$.
    \item[(ii)] \textbf{Schur-convexity:}
    $h_{2k}(x)\le h_{2k}(y)$ whenever $x\prec y$. Moreover, equality holds if
    and only if $x$ is a permutation of $y$.
    \item[(iii)] \textbf{Schur--Ostrowski criterion:}
    For every $1\le i<j\le n$,
    \[
    (x_i-x_j)\Big(\frac{\partial}{\partial x_i}-\frac{\partial}{\partial x_j}\Big)
    h_{2k}(x)\ge0,
    \]
    with strict inequality unless $x_i=x_j$.
\end{enumerate}
\end{theorem}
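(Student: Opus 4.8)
The plan is to obtain all three statements from Hunter's positivity theorem (Theorem~\ref{thm:hunter-positivity}), the differentiation and difference identities of Lemma~\ref{Hunter lemma}, and the Schur--Ostrowski criterion (Theorem~\ref{thm:Schur-Ostrowski}). Part (i) I would dispatch first, since it is essentially a reformulation: for $k\ge1$ the polynomial $h_{2k}$ is homogeneous of degree $2k>0$, so $h_{2k}(0)=0$, while Theorem~\ref{thm:hunter-positivity} gives $h_{2k}(x)>0$ for every $x\neq0$; hence $h_{2k}\ge0$ with equality precisely at the origin. (Alternatively one could read this off \eqref{CHS-moment}: $h_{2k}(a)=\tfrac1{(2k)!}\E(\sum_i a_iX_i)^{2k}\ge0$, and $\E(\sum_i a_iX_i)^{2k}=0$ forces $\sum_i a_iX_i=0$ a.s., whence $\Var(\sum_i a_iX_i)=\sum_i a_i^2=0$.)

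For part (iii) I would compute the Schur--Ostrowski expression in closed form. By the differentiation identity \eqref{CHS der}, $\partial_i h_{2k}(x)=h_{2k-1}(x,x_i)$, the complete homogeneous polynomial of degree $2k-1$ in the $n{+}1$ arguments $x_1,\dots,x_n,x_i$; applying the difference identity \eqref{CHS diff} with $a=x_i$, $b=x_j$ then yields
\[
(\partial_i-\partial_j)h_{2k}(x)=h_{2k-1}(x,x_i)-h_{2k-1}(x,x_j)=(x_i-x_j)\,h_{2k-2}(x,x_i,x_j),
\]
so that
\[
(x_i-x_j)\Big(\tfrac{\partial}{\partial x_i}-\tfrac{\partial}{\partial x_j}\Big)h_{2k}(x)=(x_i-x_j)^2\,h_{2k-2}(x_1,\dots,x_n,x_i,x_j)\ \ge\ 0,
\]
the inequality coming from Theorem~\ref{thm:hunter-positivity} (or trivially from $h_0\equiv1$ when $k=1$). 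For the equality discussion I would note that $h_{2k-2}$ of an $(n{+}2)$-tuple vanishes only when that tuple is the zero vector, i.e.\ only when $x=0$, in which case $x_i=x_j=0$ as well; hence the displayed product is strictly positive whenever $x_i\neq x_j$. This is exactly (iii).

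Part (ii) I would then split into the inequality and the equality case. Because $h_{2k}$ is a symmetric polynomial, hence symmetric with continuous partial derivatives, the inequality $h_{2k}(x)\le h_{2k}(y)$ for $x\prec y$ is immediate from (iii) and Theorem~\ref{thm:Schur-Ostrowski}. For the equality case, suppose $x\prec y$ but $x$ is not a permutation of $y$; sorting both decreasingly we have $x\neq y$, and the standard first step of the $T$-transform description of majorization (see \cite{bhatia2013matrix}) supplies indices $i<j$ with $y_i>y_j$ and a number $\delta>0$ with $2\delta\le y_i-y_j$ such that $z:=y-\delta(e_i-e_j)$ still satisfies $x\prec z\prec y$. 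Along the segment $y(t)=y-t(e_i-e_j)$, $t\in[0,\delta]$, one has $y(t)_i-y(t)_j=(y_i-y_j)-2t>0$ for $t\in[0,\delta)$, so $y(t)\neq0$ there, and (iii) gives $(\partial_i-\partial_j)h_{2k}(y(t))>0$ on $[0,\delta)$. Consequently $\tfrac{d}{dt}h_{2k}(y(t))=-(\partial_i-\partial_j)h_{2k}(y(t))<0$ on $[0,\delta)$, and by continuity $h_{2k}(z)=h_{2k}(y(\delta))<h_{2k}(y(0))=h_{2k}(y)$. Combining with the weak inequality already established, $h_{2k}(x)\le h_{2k}(z)<h_{2k}(y)$, contradicting $h_{2k}(x)=h_{2k}(y)$; hence equality in (ii) forces $x$ to be a permutation of $y$.

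The one genuinely delicate point is the equality case of (ii): one must extract a single no-overshoot transfer $y\mapsto z$ with $x\prec z\prec y$, so that the strict sign furnished by (iii) is available throughout the interior of the connecting segment, and one must handle the endpoint $t=\delta$, where $y(\delta)_i$ and $y(\delta)_j$ may coincide, by the limiting argument above rather than by the sign of the derivative at $\delta$. Both of these are routine once (iii) is in hand; the computation of (iii) itself, being a direct application of the two identities in Lemma~\ref{Hunter lemma} together with Hunter's positivity theorem, presents no real obstacle.
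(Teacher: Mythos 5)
The paper states this theorem as a cited result from \cite{Tao2017Schur} and does not reprove it, so there is no in-text proof to compare against. Your argument is correct and follows the standard (essentially Tao's) line: positivity (i) is a restatement of Theorem~\ref{thm:hunter-positivity} with the degenerate case at the origin supplied by homogeneity; the Schur--Ostrowski expression (iii) is computed in closed form via the two identities of Lemma~\ref{Hunter lemma} as $(x_i-x_j)^2\,h_{2k-2}(x,x_i,x_j)$, whose sign is then governed by Hunter's positivity (with $h_0\equiv1$ handling $k=1$); and (ii) then follows from Theorem~\ref{thm:Schur-Ostrowski}. The only genuinely delicate part is the equality case of (ii), and you treat it carefully: the choice $2\delta\le y_i-y_j$ for the $T$-transform guarantees $y(t)_i-y(t)_j>0$ on $[0,\delta)$, hence strict negativity of $\tfrac{d}{dt}h_{2k}(y(t))$ there, and integrating gives $h_{2k}(z)<h_{2k}(y)$; combined with $h_{2k}(x)\le h_{2k}(z)$ this rules out equality when $x$ is not a permutation of $y$. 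I would only remark that the implicit sign identity in (iii) should be read as a polynomial identity (valid also when $x_i=x_j$, by continuity), since Lemma~\ref{Hunter lemma} technically assumes $a\ne b$; and that the theorem, as literally quoted, allows $k=0$, where (i) and (iii) degenerate ($h_0\equiv1$) --- your ``for $k\ge1$'' caveat in (i) is the right reading of the statement.
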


Hunter’s positivity theorem for CHS polynomials has been rediscovered and proved
many times; for additional proofs and extensions we refer to
\cite{bouthat2023hunter} and the references therein.

\subsection{Power-sum symmetric polynomials}

The power-sum symmetric polynomial of degree $m$ in the variables
$x_1,\ldots,x_n$ is defined, for $m\in \mathbb{N}$, by
\[
p_m(x_1,\ldots,x_n)=x_1^m+x_2^m+\cdots+x_n^m,
\]
often written $p_m(\mathbf{x})$ or simply $p_m$ when the variables are clear
from context. The CHS polynomials and the power-sum polynomials are connected by
the following well-known identity (see, for instance,
\cite{macdonald1998symmetric,stanley1999enumerative}):
\begin{equation}\label{CHP-PSP}
h_k(\mathbf{x})
= \sum_{\substack{m_1 + 2m_2 + \cdots + km_k = k \\ m_1\ge 0, \ldots, m_k\ge 0}}
\prod_{i=1}^k \frac{p_i(\mathbf{x})^{m_i}}{m_i ! \, i^{m_i}}.
\end{equation}
All coefficients in this expansion are nonnegative. This combinatorial identity
will play a crucial role in our proof of Hunter’s conjecture, as it allows us to
compare $h_{2k}$ on different vectors by comparing only the corresponding power
sums.

\subsection{Fourier-analytic formulas}

Fourier-analytic formulas for moments and negative moments of random vectors
have played a crucial role in the study of various slicing problems in convex
geometry; see, for example,
\cite{chasapis2021ball,chasapis2022slicing,chasapis2024haagerup}. We recall
here the classical formulas that we shall need.

\begin{lemma}[Lemma~3 in \cite{gorin1991generalizations}]
Let $X$ be a random vector in $\mathbb{R}^d$ and let $p\in(0,d)$. Then
\[
\mathbb{E}\|X\|^{-p}
= b_{p,d}\int_{\mathbb{R}^d}\phi_X(t) \|t\|^{p-d}\,dt,
\]
provided that the right-hand side integral exists, where
$\phi_X(t)= \mathbb{E}e^{i\langle t , X \rangle}$ is the characteristic function
of $X$, $\|\cdot\|$ is the Euclidean norm on $\mathbb{R}^d$, and
\[
b_{p,d}
= 2^{-p}\pi^{-d/2}\frac{\Gamma\big((d-p)/2\big)}{\Gamma(p/2)}.
\]
\end{lemma}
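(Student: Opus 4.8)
The plan is to derive the identity from the classical computation of the (distributional) Fourier transform of the Riesz kernel $t\mapsto\|t\|^{-\alpha}$ on $\R^d$, combined with Fubini's theorem. Set $\alpha:=d-p$, so that $\alpha\in(0,d)$ and $\|t\|^{p-d}=\|t\|^{-\alpha}$. The first step is the Gaussian subordination formula
\[
\|t\|^{-\alpha}=(\|t\|^2)^{-\alpha/2}=\frac{1}{\Gamma(\alpha/2)}\int_0^\infty s^{\alpha/2-1}e^{-s\|t\|^2}\,ds,
\]
valid for $\alpha>0$, which realizes the kernel as a superposition of Gaussians.

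Plugging this into $\int_{\R^d}\phi_X(t)\|t\|^{p-d}\,dt$, writing $\phi_X(t)=\E e^{i\langle t,X\rangle}$, and interchanging the order of the three integrations (the expectation, the $ds$-integral, and the $dt$-integral), one is left with the Gaussian Fourier integral $\int_{\R^d}e^{i\langle t,X\rangle}e^{-s\|t\|^2}\,dt=(\pi/s)^{d/2}e^{-\|X\|^2/(4s)}$. After the substitution $u=\|X\|^2/(4s)$ the remaining $s$-integral becomes a Gamma integral in $u$ that evaluates to $2^{p}\,\Gamma(p/2)\,\|X\|^{-p}$, using $d-\alpha=p$. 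Collecting the constants $\pi^{d/2}$, $\Gamma(\alpha/2)^{-1}=\Gamma((d-p)/2)^{-1}$ and $2^p\Gamma(p/2)$ yields
\[
\int_{\R^d}\phi_X(t)\,\|t\|^{p-d}\,dt=\frac{2^{p}\,\pi^{d/2}\,\Gamma(p/2)}{\Gamma((d-p)/2)}\;\E\|X\|^{-p},
\]
which is exactly the claimed formula with $b_{p,d}=2^{-p}\pi^{-d/2}\Gamma((d-p)/2)/\Gamma(p/2)$.

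The main technical point is justifying the interchange of integrals: although $\|t\|^{p-d}$ is locally integrable near the origin (because $p>0$), it fails to be integrable at infinity, while $\phi_X$ need only be bounded, so Tonelli does not apply directly to $|\phi_X(t)|\,\|t\|^{p-d}$. Under the standing hypothesis that the right-hand side exists, the clean way around this is to regularize: for $\e>0$ insert the factor $e^{-\e\|t\|^2}$, carry out the above computation (now all integrands are absolutely integrable, so Fubini is legitimate), and then let $\e\to0^+$, controlling the right-hand side by monotone/dominated convergence and the left-hand side by the assumed convergence of $\int_{\R^d}\phi_X(t)\|t\|^{p-d}\,dt$. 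Equivalently one may truncate the kernel to an annulus $\{r\le\|t\|\le R\}$, which corresponds to truncating the $s$-integral, and pass to the limits $r\to0^+$, $R\to\infty$. This is the argument of Gorin and collaborators; since in our applications we only use the formula in regimes where absolute convergence of the Fourier-side integral is transparent, we will not dwell on these limiting details.
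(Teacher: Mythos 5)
The paper does not actually prove this lemma: it is recorded in Section~\ref{sec:prelim} purely as a cited background result (Lemma~3 of Gorin), so there is no in-text proof for your argument to be measured against. On its own terms, though, your proof is correct and follows the standard route. Setting $\alpha=d-p\in(0,d)$, the subordination identity $\|t\|^{-\alpha}=\Gamma(\alpha/2)^{-1}\int_0^\infty s^{\alpha/2-1}e^{-s\|t\|^2}\,ds$, the Gaussian Fourier transform $\int_{\R^d}e^{i\langle t,x\rangle}e^{-s\|t\|^2}\,dt=(\pi/s)^{d/2}e^{-\|x\|^2/(4s)}$, and the substitution $u=\|x\|^2/(4s)$ produce exactly $\Gamma\!\big(\tfrac{d-\alpha}{2}\big)(\|x\|^2/4)^{-(d-\alpha)/2}=2^{p}\Gamma(p/2)\|x\|^{-p}$, and the constants assemble into $b_{p,d}^{-1}$ as you claim. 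The one place where the write-up is deliberately loose is the Fubini step: since $\|t\|^{p-d}$ is not integrable at infinity and $\phi_X$ is only bounded, Tonelli cannot be applied directly, and the meaning of ``the integral exists'' matters (it may converge only as an improper integral). Your regularization sketch — inserting $e^{-\varepsilon\|t\|^2}$ or, equivalently, truncating the $s$-integral to $[\delta,M]$ (which makes the $t$-integrand absolutely integrable because the kernel then has genuine Gaussian decay) and passing to the limit — is the right fix, but as written it is an outline rather than a complete argument; in particular the dominated-convergence step as $\varepsilon\to0^{+}$ needs the a priori finiteness $\E\|X\|^{-p}<\infty$ or a monotonicity argument on the $s$-side to control the inner integral. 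Since the paper treats this as a quoted classical fact, I would either keep your computation as a remark and leave the limiting details to the cited reference, or state explicitly the absolute-integrability regime in which the interchange is immediate, which is the only regime the paper actually uses.
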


There are also Fourier-type formulas for positive moments.

\begin{lemma}[Lemmas~2.3 and 2.4 in \cite{haagerup1981best}]
Let
\[
C_p = \frac{2}{\pi} \Gamma(1 + p) \sin\left( \frac{p\pi}{2} \right).
\]
For a real-valued random variable $X$ with characteristic function
$\phi_X(t) = \mathbb{E}(e^{itX})$, we have, for $p \in (0,2)$,
\begin{equation}\label{Fourier (0,2)}
\mathbb{E}|X|^p = C_p \int_{0}^{\infty} \frac{1 - \Re(\phi_X(t))}{t^{p+1}} \, dt.
\end{equation}
For $p \in (2,4)$, assuming $\mathbb{E}(X^4) < \infty$, it holds that
\begin{equation}\label{Fourier (2,4)}
\mathbb{E}|X|^p = -C_p \int_{0}^{\infty}
\left( \Re(\phi_X(t)) - 1 + \frac{1}{2} \mathbb{E}(X^2) t^2 \right)
t^{-(p+1)} \, dt.
\end{equation}
\end{lemma}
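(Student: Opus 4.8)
The plan is to reduce both identities to a single deterministic integral representation of the power function $x\mapsto|x|^p$ on $\R$ and then integrate against the law of $X$ by Tonelli's theorem, using throughout that $\Re\phi_X(t)=\E\cos(tX)$. The two cases differ only in which Taylor jet of $\cos$ one subtracts so that the resulting kernel is non-negative and integrable against $t^{-p-1}$ in the relevant range of $p$.

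\emph{The range $p\in(0,2)$.} For fixed $x\neq0$, the substitution $u=t|x|$ together with the evenness of cosine gives
\[
\int_0^\infty\frac{1-\cos(tx)}{t^{p+1}}\,dt=|x|^p\,\kappa_p,\qquad \kappa_p:=\int_0^\infty\frac{1-\cos u}{u^{p+1}}\,du,
\]
where $\kappa_p$ is finite precisely because $1-\cos u=O(u^2)$ near $0$ (here $p<2$ is used) while $1-\cos u$ stays bounded (here $p>0$ is used); the identity is trivial at $x=0$. So it suffices to prove $\kappa_p=1/C_p$. Integrating by parts against $dv=u^{-p-1}\,du$, the boundary terms vanish at $0$ (since $p<2$) and at $\infty$ (since $p>0$), leaving $\kappa_p=\tfrac1p\int_0^\infty u^{-p}\sin u\,du$. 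The Mellin evaluation $\int_0^\infty u^{s-1}\sin u\,du=\Gamma(s)\sin(\pi s/2)$, applied with $s=1-p$, gives $\kappa_p=\tfrac1p\,\Gamma(1-p)\cos(p\pi/2)$; combining the reflection formula $\Gamma(1-p)\Gamma(p)=\pi/\sin(\pi p)$, the identity $\Gamma(1+p)=p\Gamma(p)$, and $\sin(\pi p)=2\sin(p\pi/2)\cos(p\pi/2)$ yields $\kappa_p=\pi/\bigl(2\Gamma(1+p)\sin(p\pi/2)\bigr)=1/C_p$. Finally, since $1-\cos(tX)\ge0$, Tonelli lets us interchange $\E$ and $\int_0^\infty$, and \eqref{Fourier (0,2)} follows.

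\emph{The range $p\in(2,4)$.} The argument is identical with the kernel $1-\cos u$ replaced by $g(u):=\cos u-1+\tfrac12u^2$, which satisfies $g\ge0$ on $[0,\infty)$ because $g(0)=0$ and $g'(u)=u-\sin u\ge0$. The same scaling $u=t|x|$ gives
\[
\int_0^\infty\frac{\cos(tx)-1+\tfrac12t^2x^2}{t^{p+1}}\,dt=|x|^p\,\lambda_p,\qquad \lambda_p:=\int_0^\infty\frac{g(u)}{u^{p+1}}\,du,
\]
with $\lambda_p$ finite because $g(u)=O(u^4)$ near $0$ (here $p<4$) and $g(u)\sim\tfrac12u^2$ near $\infty$ (here $p>2$, which is exactly why the plain $1-\cos$ kernel fails and one must subtract the quadratic term). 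Two integrations by parts --- all boundary terms vanishing precisely because $2<p<4$ --- reduce this to $\lambda_p=\tfrac{1}{p(p-1)}\int_0^\infty\tfrac{1-\cos u}{u^{p-1}}\,du=\tfrac{\kappa_{p-2}}{p(p-1)}$. Inserting the value of $\kappa_{p-2}$ from the previous step and using $\sin((p-2)\pi/2)=-\sin(p\pi/2)$ together with $\Gamma(p+1)=p(p-1)\Gamma(p-1)$ gives $\lambda_p=-\pi/\bigl(2\Gamma(1+p)\sin(p\pi/2)\bigr)=-1/C_p$ (note $C_p<0$ on this range, and $\lambda_p>0$ consistently). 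Hence $|x|^p=-C_p\int_0^\infty t^{-p-1}\bigl(\cos(tx)-1+\tfrac12t^2x^2\bigr)\,dt$, and integrating against the law of $X$ via Tonelli (legitimate since $g\ge0$, and meaningful since $\E X^4<\infty$ forces $\E X^2<\infty$ and $\E|X|^p<\infty$) produces \eqref{Fourier (2,4)}.

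\emph{Main obstacle.} The only genuinely delicate points are (i) justifying the Mellin evaluation $\int_0^\infty u^{s-1}\sin u\,du=\Gamma(s)\sin(\pi s/2)$ on the whole relevant strip of $s$ --- cleanest via analytic continuation, since both sides are holomorphic in $s$ wherever the integral converges and agree for $\Re s\in(0,1)$, which is the classically elementary range --- and (ii) verifying that every boundary term produced by the integrations by parts genuinely vanishes, which is exactly where the hypotheses $p<2$, respectively $2<p<4$, are consumed. Everything else is routine Gamma-function bookkeeping and a single application of Tonelli's theorem.
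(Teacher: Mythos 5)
Your argument is correct, and it is exactly the method the paper itself uses to prove the adjacent Lemma~\ref{Fourier (4,6)}: the scaling substitution $u=t|x|$, integration by parts against $u^{-p-1}$ to reduce the kernel to $1-\cos u$, the base evaluation $\int_0^\infty(1-\cos u)u^{-q-1}\,du=1/C_q$ for $q\in(0,2)$, the recurrence $p(p-1)C_{p-2}=-C_p$, and a final Fubini/Tonelli step against the law of $X$. The paper does not reprove the stated lemma but cites it directly from Haagerup \cite{haagerup1981best}, whose proof is this same one, so your proposal coincides with the intended argument.
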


Using the method introduced in \cite{haagerup1981best} to prove the above lemmas  we can actually prove the following:
\begin{lemma} \label{Fourier (4,6)}
    Let $X$ be a real-valued  random variable that satisfies $\mathbb{E}X^6<\infty$. For $p\in(4,6)$ we have 
    \[
     \mathbb{E}|X|^p=C_p \int_{0}^{\infty}(-\Re(\phi_X(t))+1-\frac{1}{2}\mathbb{E}(X^2)t^2+\frac{1}{4!}\mathbb{E}(X^4)t^4)t^{-(p+1)}dt,
    \]
    where $C_p$ is the previous constant.
\end{lemma}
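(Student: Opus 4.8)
The plan is to follow the contour/integration-by-parts scheme of Haagerup \cite{haagerup1981best} that produces \eqref{Fourier (0,2)} and \eqref{Fourier (2,4)}, pushing the Taylor expansion of $\Re\phi_X$ one order further to absorb the $t^2$ and $t^4$ obstructions to integrability near $t=0$. The starting point is the elementary Fourier identity
\begin{equation*}
|x|^p = \kappa_p \int_0^\infty \frac{1-\cos(xt)}{t^{p+1}}\,dt
\end{equation*}
valid for $0<p<2$ with an explicit constant $\kappa_p$; the idea is to generate the case $p\in(4,6)$ by repeatedly integrating by parts in $t$, each step trading two powers of $t$ in the denominator for two derivatives hitting $\cos(xt)$ (producing the factors $x^2$, then $x^4$), and checking that all boundary terms at $0$ and $\infty$ vanish. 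This yields a scalar identity of the form
\begin{equation*}
|x|^p = C_p \int_0^\infty \Big(-\cos(xt)+1-\tfrac{1}{2}x^2 t^2+\tfrac{1}{4!}x^4 t^4\Big) t^{-(p+1)}\,dt,
\end{equation*}
with $C_p = \frac{2}{\pi}\Gamma(1+p)\sin(p\pi/2)$ as before; the sign of $C_p$ on $(4,6)$ is exactly what makes the right-hand side positive, since the bracketed function behaves like $+\tfrac{1}{6!}x^6t^6$ near $t=0$ and is $O(1)$ for large $t$.

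Once the pointwise identity in $x$ is established, I would integrate it against the law of $X$ and justify interchanging $\mathbb{E}$ with $\int_0^\infty$ by Fubini–Tonelli. For this one needs an integrable dominating function for $\big|-\cos(xt)+1-\tfrac12 x^2t^2+\tfrac1{4!}x^4t^4\big|\,t^{-(p+1)}$: near $t=0$ use the Taylor bound $\le \tfrac{1}{6!}x^6 t^6$, which after dividing by $t^{p+1}$ is integrable on $(0,1)$ precisely because $p<6$ and is controlled in expectation by the hypothesis $\mathbb{E}X^6<\infty$; for $t\ge 1$ the bracket is bounded by $1+\tfrac12 x^2t^2+\tfrac1{4!}x^4t^4$, and after dividing by $t^{p+1}$ with $p>4$ the worst term $t^{4}\cdot t^{-(p+1)}=t^{3-p}$ is integrable on $(1,\infty)$, again with an $X^4\le X^6+1$ moment bound. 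Combining the two regimes gives a dominating function of the form $\min\{c_1 x^6 t^{5-p},\, c_2 (1+x^2 t^{1-p}+x^4 t^{3-p})\}$ whose expectation is finite, so Fubini applies and $\mathbb{E}(-\Re\phi_X(t))=\mathbb{E}(-\cos(tX))$ can be substituted, noting $\Re\phi_X(t)=\mathbb{E}\cos(tX)$.

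The main obstacle is purely bookkeeping rather than conceptual: verifying that every boundary term generated in the successive integrations by parts genuinely vanishes. At $t=\infty$ this is immediate from the decay of the negative powers of $t$ (and oscillation); at $t=0$ one must check that the residual terms after each integration by parts vanish to sufficiently high order, which is where the Taylor polynomial $1-\tfrac12 x^2t^2+\tfrac1{4!}x^4t^4$ is exactly tuned — it is the order-$5$ Taylor polynomial of $\cos(xt)$ (the odd terms being absent), so the integrand vanishes like $t^6$ at the origin and the scheme terminates cleanly. I would also double-check the constant $C_p$ by matching against the known Gamma-function evaluation $\int_0^\infty (1-\cos u)u^{-p-1}\,du$ analytically continued in $p$, which gives back the same $\frac{2}{\pi}\Gamma(1+p)\sin(p\pi/2)$ and confirms that no spurious factors appear when the formula is extended across $p=2$ and $p=4$.
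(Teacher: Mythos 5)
Your plan is correct and follows essentially the same route as the paper: establish the scalar identity $|x|^p = C_p\int_0^\infty(-\cos(xt)+1-\tfrac12 x^2t^2+\tfrac1{4!}x^4t^4)t^{-p-1}\,dt$ by reducing to the $(0,2)$ case via four integrations by parts (the paper additionally normalizes by the substitution $u=|x|t$ first, but this is cosmetic), and then pass to expectations via Fubini and $\Re\phi_X(t)=\mathbb{E}\cos(tX)$. Your domination argument for Fubini is in fact a bit more explicit than the paper's, which simply checks finiteness of the one-variable integral by Taylor behaviour near $0$ and $O(t^4)$ growth near $\infty$ and then invokes Fubini.
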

\begin{proof} 

Let $x\in \mathbb{R}$, we will compute 
\[
M:=\int_{0}^{\infty}(-\cos(xt)+1-\frac{1}{2!}x^2t^2+\frac{1}{4!}x^4t^4)t^{-p-1} \, dx.
\]
Notice that  $-\cos(u)+1-\frac{1}{2}u^2+\frac{1}{4!}u^4>0$ for  $u>0$. Since,  $\cos(t)=1-\frac{1}{2}t^2+\frac{1}{4!}t^4+ O(t^6)$ for $t\to 0$ and $\cos(t)=1-\frac{1}{2}t^2+\frac{1}{4!}t^4+ O(t^4)$ for $t\to \infty$ we see that $M$ is finite. 
Using the substitution $u=|x|t$ and integrating by parts, we get
  \begin{align*}
M 
&= |x|^{p}\int_{0}^{\infty}
(-\cos u+1-\tfrac{1}{2!}u^{2}+\tfrac{1}{4!}u^{4})u^{-p-1}\, du \\[4pt]
&= \frac{|x|^{p}}{p(p-1)(p-2)(p-3)}
\int_{0}^{\infty}
(1-\cos u)u^{-p+3}\, du \\[4pt]
&= \frac{|x|^{p}}{p(p-1)(p-2)(p-3)}\,\frac{1}{C_{p-4}}
= |x|^{p}\,\frac{1}{C_{p}} .
\end{align*}

In the last steps we used the facts that $p(p-1)C_{p-2}=-C_p$,  $0<p-4<2$ and 
\[
\int_{0}^{\infty}(1-\cos u)u^{-q-1}=\frac{1}{C_q},
\]
for $0<q<2$ (see \cite{haagerup1981best}).
Thus,
 \[
 |x|^p=C_p  \int_{0}^{\infty}(-\cos(xt)+1-\frac{1}{2}x^2t^2+\frac{1}{4!}x^4t^4)t^{-p-1}
 \]

 The result follows from the fact $\Re(\phi_X(t))=\Re(\mathbb{E}(e^{itx}))=\mathbb{E}(\cos(tX))$ combined with Fubini's Theorem. 
\end{proof}

\subsection{Weighted sums of exponential random variables}

We next recall an explicit representation of the density of weighted sums of
independent exponentials. It is a folklore result (see, e.g.,
\cite{brzezinski2013volume}) that the density of the linear combination
$a_1 X_1 + \cdots + a_n X_n$, denoted by $G$, where
$X_1, \ldots, X_n$ are i.i.d.\ standard exponential random variables, is given
by
\begin{align*}
G(t)
&= \sum_{\substack{j=1 \\ a_j > 0}}^{n} 
\frac{1}{a_j} 
\prod_{\substack{k=1 \\ k \neq j}}^{n} 
\frac{a_j}{a_j - a_k} 
e^{-t/a_j} \, \mathbbm{1}_{[0, \infty)}(t) \\
&= - \sum_{\substack{j=1 \\ a_j < 0}}^{n} 
\frac{1}{a_j} 
\prod_{\substack{k=1 \\ k \neq j}}^{n} 
\frac{a_j}{a_j - a_k} 
e^{-t/a_j} \, \mathbbm{1}_{(-\infty, 0]}(t),
\end{align*}
that is, for $t\neq 0$,
\begin{equation}\label{density of a_iX_i}
G(t)
= \sum_{\substack{j=1\\a_j>0}}^{n}
 \frac{1}{a_j} \prod_{\substack{k=1\\k \neq j}}^{n}
 \frac{a_j}{a_j - a_k} e^{-t/a_j} \mathbbm{1}_{[0,\infty)}(t)
- \sum_{\substack{j=1\\a_j<0}}^{n}
 \frac{1}{a_j} \prod_{\substack{k=1\\k \neq j}}^{n}
 \frac{a_j}{a_j - a_k} e^{-t/a_j} \mathbbm{1}_{(-\infty,0]}(t),
\end{equation}
and
\begin{equation}\label{density of a_iX_i at 0}
G(0)
=\frac{1}{2}\left(
\sum_{\substack{j=1\\a_j>0}}^{n} \frac{1}{a_j}
\prod_{\substack{k=1\\k \neq j}}^{n} \frac{a_j}{a_j - a_k} 
- \sum_{\substack{j=1\\a_j<0}}^{n} \frac{1}{a_j}
\prod_{\substack{k=1\\k \neq j}}^{n} \frac{a_j}{a_j - a_k} \right).
\end{equation}

This, in turn, implies the following interpolation formula:
\begin{equation}\label{interpol}
\mathbb{E}\left|\sum_{j=1}^n a_j X_j \right|^q
= \Gamma(1+q)\cdot \left(
\sum_{j=1}^n |a_j|^q \prod_{i \neq j} \frac{a_j}{a_j - a_i} \right),
\end{equation}
which remains valid for all $q+1 > 0$, and also for $q+1<0$ provided that
$q+1$ is not an integer. Here $\Gamma$ denotes the Euler gamma function,
defined by
\[
\Gamma(a)=\int_{0}^{\infty}t^{a-1}e^{-t}\,dt \quad \text{for $\Re(a)>0$},
\]
and extended to all $a<0$ except at its poles $\{0,-1,-2,\ldots\}$ by the
recurrence $\Gamma(a)=\Gamma(a+1)/a$.
\subsection{Palindromic and anti-palindromic polynomials}

Finally, we record a simple algebraic notion that will be used in some auxiliary
arguments.

\begin{defn}
Given a polynomial $P(x)=a_0+a_1x+\ldots+a_nx^n$, we say that it is
\emph{palindromic} if $a_i=a_{n-i}$ for all $i=0,1,\ldots,n$, i.e.\ if its
coefficients, when the polynomial is written in the order of ascending or
descending powers, form a palindrome.

Similarly, a polynomial $P$ of degree $n$ is called \emph{anti-palindromic} if
$a_i=-a_{n-i}$ for all $i=0,1,\ldots,n$.
\end{defn}

An immediate property of an anti-palindromic polynomial $P(x)$ is that $x = 1$
is always a root. 

\section{A Proof of Hunter's Conjecture}\label{sec:hunter}

In this section we prove our first main result, which gives a complete solution
to Hunter's conjecture in the scalar case. We begin with the precise
description of the extremizers for $h_4$, and then proceed to all even degrees.

\begin{proposition}\label{Hunter h_4} 
 Let $a_1, \ldots, a_n$ be real numbers such that $\sum_{i=1}^n a_i^2 = 1$. Then $h_4$ attains its maximum at the vector 
\[
\overline{a} = \left( \frac{1}{\sqrt{n}}, \ldots, \frac{1}{\sqrt{n}} \right),
\]
while it attains its minimum at the “half-plus/half-minus” vector
\[
\tilde{a} = \left(\underbrace{\tfrac{1}{\sqrt{n}}, \dots, \tfrac{1}{\sqrt{n}}}_{n/2}, \underbrace{-\tfrac{1}{\sqrt{n}}, \dots, -\tfrac{1}{\sqrt{n}}}_{n/2}\right)
\]
when $n$ is even, and at a vector of the form
\[
\left( \underbrace{a, \ldots, a}_{\frac{n-1}{2}}, \underbrace{b, \ldots, b}_{\frac{n+1}{2}} \right),
\]
when $n$ is odd. Here $a$ appears $\frac{n-1}{2}$ times, $b$ appears $\frac{n+1}{2}$ times, and $\frac{a}{b}$ minimizes the function
\[
\frac{x^2 + 1 + \left( \frac{n+1}{2}x + \frac{n+3}{2} \right)^2}{\frac{n-1}{2}x^2 + \frac{n+1}{2}}.
\]
\end{proposition}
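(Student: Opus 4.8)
The plan is to exploit the probabilistic representation $24\,h_4(a)=\E\big(\sum_i a_iX_i\big)^4$ together with an exact rewriting of this quantity, and then to treat the maximization and minimization separately. First I would record the identity that drives everything. Writing $S=\sum_i a_iX_i$, whose mean is $p_1=\sum_i a_i$, and expanding in the central moments of the standard exponentials (for which $\E(X-1)^2=1$, $\E(X-1)^3=2$, $\E(X-1)^4=9$), one obtains $24\,h_4(a)=p_1^4+6p_1^2p_2+8p_1p_3+6p_4+3p_2^2$ in terms of the power sums $p_j=\sum_i a_i^j$; imposing $p_2=1$ and completing the square coordinatewise this becomes
\[
24\,h_4(a)=3+\tfrac{10}{3}\,p_1^{2}+p_1^{4}+6\sum_{i=1}^{n}a_i^{2}\Big(a_i+\tfrac23 p_1\Big)^{2}.
\]
(The same identity can be read off from \eqref{CHP-PSP}.) Both extrema will follow from this formula and its consequences.

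For the maximum, note that $h_4$ has only nonnegative coefficients and $h_4\ge0$ by Theorem~\ref{thm:hunter-positivity}, so $h_4(a)\le h_4(|a|)$ and it suffices to maximize over $a$ with nonnegative entries. After the substitution $a_i=\sqrt{x_i}$ one is left to show that $x\mapsto h_4(\sqrt{x_1},\dots,\sqrt{x_n})$ is Schur-concave on $\R^n_{\ge0}$; granting this, \eqref{maj seq} forces the maximum at $x=(1/n,\dots,1/n)$, i.e.\ $a=\overline a$. Schur-concavity I would establish from Theorem~\ref{thm:Schur-Ostrowski} and \eqref{CHS der}: with $u=\sqrt{x_i}$, $v=\sqrt{x_j}$ the Schur--Ostrowski quantity equals $(u-v)^2(u+v)$ times a term which, after using $\sum_k\sqrt{x_k}\ge u+v$ and similar bounds from the nonnegativity of the remaining coordinates, is nonpositive because $s^3+t^3\ge st(s+t)$ (equivalently $s^3+t^3-st(s+t)=(s-t)^2(s+t)\ge0$) for all $s,t\ge0$.

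For the minimum when $n$ is even, Cauchy--Schwarz applied to the last term of the identity gives $\sum_i a_i^{2}(a_i+\tfrac23 p_1)^{2}\ge\frac1n\big(\sum_i(a_i^{2}+\tfrac23 p_1a_i)\big)^{2}=\frac1n\big(1+\tfrac23 p_1^{2}\big)^{2}$, hence
\[
24\,h_4(a)\ \ge\ 3+\tfrac{10}{3}p_1^{2}+p_1^{4}+\tfrac6n\big(1+\tfrac23 p_1^{2}\big)^{2}\ \ge\ 3+\tfrac6n,
\]
the last step because the middle expression is increasing in $p_1^{2}\ge0$. Equality forces $p_1=0$ and $a_i^{2}+\tfrac23 p_1a_i=a_i^{2}$ constant, i.e.\ $|a_i|\equiv1/\sqrt n$ together with $\sum_i a_i=0$; this is possible precisely because $n$ is even, and the equality cases are exactly the permutations of $\tilde a$, giving $h_4(\tilde a)=\frac{n+2}{8n}$.

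For the minimum when $n$ is odd the Cauchy--Schwarz bound is no longer attained, and I would proceed through the critical-point structure. At a minimizer $\nabla h_4=2\mu a$, and using \eqref{CHS der} together with $h_3(a,t)=h_3(a)+t\,h_2(a)+t^2p_1+t^3$, every coordinate of $a$ is a root of the cubic $Q(t)=t^3+p_1t^2+(h_2(a)-2\mu)t+h_3(a)$, so $a$ takes at most three distinct values. To rule out three genuinely distinct values $r_1>r_2>r_3$, I would use the second-order condition $\langle v,(D^2h_4-2\mu I)v\rangle\ge0$ for $v$ tangent to the sphere, computing the needed Hessian entries from \eqref{CHS diff} ($\partial_j^2h_4=2h_2(a,a_j,a_j)$, $\partial_j\partial_kh_4=h_2(a,a_j,a_k)$, with $h_2(a,s,s)=h_2(a)+2sp_1+3s^2$ and $h_2(a,s,t)=h_2(a)+(s+t)p_1+s^2+st+t^2$): testing $v=e_j-e_k$ inside the group of a repeated middle value gives $\langle v,(D^2h_4-2\mu I)v\rangle=2(r_2-r_1)(r_2-r_3)<0$, so the middle value is simple; a further test on the plane spanned by an $e_j-e_k$ move inside an outer group and the (degenerate) three-coordinate move then forces both outer groups to be singletons, hence $n=3$, which is handled directly (for fixed $\sum_i a_i$ the extreme value of $a_1\cdots a_n$ on the sphere occurs when two coordinates coincide). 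Thus for odd $n$ the minimizer has exactly two distinct values $(\underbrace{a,\dots,a}_{m},\underbrace{b,\dots,b}_{n-m})$, and---using Euler's identity $4h_4=\sum_i a_i\partial_i h_4=2\mu$, Vieta's formulas for $Q$, and the sum-of-squares decomposition $h_2(a)=\frac12(\sum a_i)^2+\frac12\sum a_i^2$, which yield $8\,h_4=a^2+b^2+c^2+1$ on the sphere with $c=-(m+1)a-(n-m+1)b$ the third root of $Q$---one reduces $h_4$, via $x=a/b$, to a one-variable rational function of $x$. Minimizing over $x$ and comparing across $m$ yields the optimal split $m=\frac{n-1}{2}$, $n-m=\frac{n+1}{2}$ (the split closest to the even-case balance $n/2:n/2$) and the displayed function. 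The main obstacle is exactly this odd case: the three-coordinate second variation vanishes identically, so excluding three distinct values requires combining it with an intra-group move; the base case $n=3$ needs its own argument; and pinning down the optimal split together with the one-variable optimization in $x$ demands careful bookkeeping.
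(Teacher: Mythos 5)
Your even-$n$ lower bound is correct and, in fact, a genuinely different and cleaner route than the paper's. Writing $24h_4=p_1^4+6p_1^2p_2+8p_1p_3+6p_4+3p_2^2$ and, under $p_2=1$, completing the square coordinatewise to get $24h_4=3+\tfrac{10}{3}p_1^2+p_1^4+6\sum_i a_i^2\bigl(a_i+\tfrac23p_1\bigr)^2$ is right, the Cauchy--Schwarz step $\sum_i\bigl[a_i(a_i+\tfrac23p_1)\bigr]^2\ge\tfrac1n\bigl(1+\tfrac23p_1^2\bigr)^2$ is right, and monotonicity in $p_1^2$ closes it with equality iff $p_1=0$ and $|a_i|\equiv1/\sqrt n$, i.e.\ a permutation of $\pm\tilde a$. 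The paper instead runs Lagrange multipliers and a discriminant analysis to reach the same bound; your identity-plus-Cauchy--Schwarz argument is shorter and more transparent for even $n$. For the maximum you are proposing essentially the paper's own route (reduce to $a_i\ge0$, then Schur-concavity of $x\mapsto h_4(\sqrt{x_1},\dots,\sqrt{x_n})$), so that part is fine modulo the Schur--Ostrowski details, which the paper handles via a Taylor expansion in Theorem~\ref{Schur-gamma}.

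The odd-$n$ case is where the real work lies, and there the proposal has genuine gaps. First, the claim that a second-variation test ``forces both outer groups to be singletons'' does not follow from what you wrote: by the same computation you used for the middle value, testing $v=e_j-e_k$ inside an outer group with common value $r_1$ gives $\langle v,(D^2h_4-2\mu I)v\rangle=2Q'(r_1)=2(r_1-r_2)(r_1-r_3)>0$, which is \emph{consistent} with a local minimum, so the intra-group test alone rules out nothing; the ``degenerate three-coordinate move'' argument you allude to would have to be a genuine two-dimensional Hessian computation, and it is neither written out nor obviously correct. (Note that the paper does not rule out three-value critical points as local minima at all; it simply \emph{bounds} $a^2+b^2+c^2\ge 2/(n-1)$ at any such point, via the discriminant computation in the proof of Proposition~\ref{Hunter h_4}.) Second, the reduction from the two-value case ``$(a,\dots,a,b,\dots,b)$, $x=a/b$'' to the specific split $m=(n-1)/2$ and the displayed one-variable function is asserted but not carried out; this is exactly the content of the paper's estimate $a^2+b^2+\bigl[(\gamma_1+1)a+(\gamma_2+1)b\bigr]^2\ge\rho_1(n)\bigl(\gamma_1a^2+\gamma_2b^2\bigr)$ and the analysis of the discriminant as a function of $\gamma_1$, which is the bulk of the proof. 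Finally, you also need to compare the two-value minimum against the value at the degenerate boundary cases (a value equal to $0$), which the paper handles and you do not mention. As it stands, the odd case is an outline of the shape of the argument rather than a proof, and the parts you defer to ``careful bookkeeping'' are precisely the parts that make the proposition nontrivial.
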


Our main theorem settles Hunter's conjecture for all even degrees.

\begin{theorem}\label{Hunter conj}
Let $n$ be an even integer, and let $a_1, \ldots, a_n$ be real numbers such that $\sum_{i=1}^n a_i^2 = 1$. Then, for every integer $r\ge1$,
\[
h_{2r}(a_1, \ldots, a_n) \geq \frac{\big(\frac{n}{2} + r - 1\big)!}{r! \cdot \big(\frac{n}{2} - 1\big)! \cdot n^r},
\]
and this inequality is sharp, with equality achieved if and only if
$(a_1,\dots,a_n)$ is a permutation of the half-plus/half-minus vector $\tilde{a}$.
\end{theorem}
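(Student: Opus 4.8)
The plan is to reduce the minimization of $h_{2r}$ on the Euclidean sphere to a comparison of power sums via the combinatorial identity \eqref{CHP-PSP}, and then to show that the half-plus/half-minus vector $\tilde a$ minimizes every power sum $p_{2m}$ that appears (and is the unique minimizer up to permutation whenever the power sum is not constant on the constraint set). Concretely, writing $b_i = a_i^2$, the constraint $\sum a_i^2 = 1$ becomes $\sum b_i = 1$ with $b_i \ge 0$, and $p_{2m}(a) = \sum_i b_i^m = p_m(b)$. Since the map $b \mapsto \sum b_i^m$ is strictly Schur-convex on the simplex $\{b \ge 0,\ \sum b_i = 1\}$ for $m \ge 2$, and by \eqref{maj seq} the flat vector $(1/n,\dots,1/n)$ is majorized by every admissible $b$, each even power sum $p_{2m}(a)$ with $m\ge 2$ is minimized exactly when $b = (1/n,\dots,1/n)$, i.e. when $|a_i| = 1/\sqrt n$ for all $i$ --- in particular at $\tilde a$. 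The odd power sums $p_{2m+1}(a) = \sum_i a_i^{2m+1}$ are not sign-definite, so the key additional input is the following: \emph{among all vectors with $|a_i| = 1/\sqrt n$ for every $i$}, the contribution of the terms in \eqref{CHP-PSP} involving odd power sums is minimized precisely when the signs are balanced, $n/2$ plus and $n/2$ minus. Here I would first argue that on the constraint set the expansion \eqref{CHP-PSP} can be organized so that lowering $h_{2r}$ forces $b$ to the flat vector, and then, having fixed $|a_i| = 1/\sqrt n$, evaluate $h_{2r}$ directly.

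For the second part --- the evaluation once $|a_i| = 1/\sqrt n$ is forced --- I would use the generating function \eqref{CHS by mgf means}. If exactly $p$ of the $a_i$ equal $1/\sqrt n$ and $q = n - p$ equal $-1/\sqrt n$, then
\[
\sum_{k\ge 0} h_k(a)\, t^k = \frac{1}{(1 - t/\sqrt n)^p\,(1 + t/\sqrt n)^q},
\]
so $h_{2r}(a)$ is the coefficient of $t^{2r}$ in $(1 - s)^{-p}(1+s)^{-q}$ with $s = t/\sqrt n$, hence equals $n^{-r}$ times that coefficient with $s = t$. It therefore suffices to show that the function
\[
(p,q) \longmapsto [t^{2r}]\,(1-t)^{-p}(1+t)^{-q}, \qquad p + q = n,
\]
is minimized at $p = q = n/2$, and that its value there is $\binom{n/2 + r - 1}{r}$. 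The value at the balanced point is immediate: $(1-t^2)^{-n/2} = \sum_{r\ge0}\binom{n/2+r-1}{r} t^{2r}$, which gives exactly the claimed closed form $\dfrac{(n/2+r-1)!}{r!\,(n/2-1)!\,n^r}$ after reinstating the factor $n^{-r}$. For the minimality in $(p,q)$, I would write $[t^{2r}](1-t)^{-p}(1+t)^{-q} = \sum_{j=0}^{2r} \binom{p+j-1}{j}\binom{q + 2r - j - 1}{2r - j}(-1)^{2r-j}$ and compare the cases $(p,q)$ and $(p-1, q+1)$; by symmetry it is enough to show the coefficient decreases as $(p,q)$ moves toward the diagonal. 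An alternative, cleaner route is to use \eqref{CHS-moment}: with $N = \sum a_i X_i = \frac{1}{\sqrt n}\big(\sum_{i \le p} X_i - \sum_{i > p} X_i\big)$ and $S = \sum_{i\le p} X_i \sim \Gamma(p,1)$, $T = \sum_{i>p} X_i \sim \Gamma(q,1)$ independent, we have $(2r)!\, h_{2r}(a) = n^{-r}\,\E(S - T)^{2r}$, and $\E(S-T)^{2r}$ is minimized over $p + q = n$ at $p = q = n/2$ because $S - T$ is then symmetric and has the smallest even moments among the family (its variance is $p + q = n$, fixed, but higher even moments shrink as the skewness is killed). I would make this precise by noting $\E(S-T)^{2r} = \sum_{\ell} \binom{2r}{2\ell}\E S^{2r - 2\ell}\,\ldots$ — actually cleanest is to expand $\E(S-T)^{2r} = \sum_{m=0}^{2r}\binom{2r}{m}(-1)^m \E S^{2r-m}\E T^m$ with $\E S^{j} = (p)_j := p(p+1)\cdots(p+j-1)$ (rising factorials) and show term-by-term, or via a convexity/log-convexity argument on rising factorials, that the sum is Schur-convex in $(p,q)$.

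The main obstacle is the coupling between the two reductions: strictly speaking, one cannot first push $b = (a_i^2)$ to the flat vector and then independently optimize the signs, because the odd power sums in \eqref{CHP-PSP} depend on the signs and are not minimized at the flat $|a_i|$. The honest argument must treat $h_{2r}$ as a whole. I expect the cleanest fix is to bypass \eqref{CHP-PSP} entirely for the final evaluation and instead prove two separate inequalities: (a) for \emph{arbitrary} $a$ on the sphere, $h_{2r}(a) \ge \min_{\text{sign patterns}} h_{2r}(\text{flat vector with those signs})$ --- this is where Schur-convexity of $h_{2r}$ in the squared coordinates, or a direct argument via \eqref{CHS-moment} conditioning on $|a_i|$ and using Jensen/symmetrization in the $X_i$, does the work; and (b) the discrete optimization over sign patterns of the flat vector, handled by the generating-function or Gamma-moment computation above. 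Establishing (a) rigorously --- showing the worst sphere vector is always a $\pm 1/\sqrt n$ vector --- is the delicate point; I would attempt it by fixing the signs $\e_i = \sgn(a_i)$ and showing $a \mapsto h_{2r}(a)$ restricted to the orthant-face $\{\e_i a_i \ge 0\}\cap\mathbb{S}^{n-1}$ attains its minimum where all $|a_i|$ are equal, using the Schur--Ostrowski criterion (Theorem \ref{thm:Tao}(iii)) applied to the variables $a_i^2$ together with the Lagrange-multiplier stationarity condition $h_{2r-1}(a, a_i) = \lambda a_i$, which forces equal $|a_i|$ at any interior minimizer. Uniqueness of the extremizer (a permutation of $\tilde a$) then follows from the strictness in Theorem \ref{thm:Tao}(ii)--(iii) and the strict inequality in the discrete sign optimization.
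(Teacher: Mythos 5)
You correctly identify the central obstacle: one cannot first push the absolute values to the flat configuration and then independently balance the signs, because the odd power sums in \eqref{CHP-PSP} couple the two. The trouble is that your proposed fix for step~(a) does not close the gap. You claim that within each orthant-face of the sphere the Lagrange stationarity conditions force all $|a_i|$ to be equal. This is false. The stationarity equations $h_{2r-1}(a,a_i)=2\lambda a_i$, combined with the difference identity \eqref{CHS diff}, only force the coordinates to take at most \emph{three} distinct values — this is precisely the content of Proposition~\ref{Lag integers unconditional}, and one cannot improve it to a single absolute value. Concretely, for $h_4$ on the positive orthant the constrained minimum is attained at $(1,0,\dots,0)$, not at the flat vector $(1/\sqrt n,\dots,1/\sqrt n)$ — compare Corollary~\ref{Cor Gamma pos k}, which gives $\mathbb{E}X_1^4\le \mathbb{E}(\sum a_jX_j)^4$ for non-negative $a$, with equality at the one-spike vector. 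More generally Theorem~\ref{pos k > n+2} shows the non-negative minimizer is a vector supported on $m$ equal coordinates for some $m$ depending on $r$ and $n$, not the flat vector. So the orthant-by-orthant reduction to flat vectors cannot be made rigorous by the tools you invoke; the worst point in a fixed orthant is simply not where you want it.

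The paper's proof takes a genuinely different route that sidesteps this difficulty. It proceeds by induction on $r$, first classifying critical points on the sphere as vectors with at most three distinct values via Lagrange multipliers and Hunter positivity (Proposition~\ref{Lag integers unconditional}), with Propositions~\ref{Hunter h_4} and the case $r=1$ as base cases. For the inductive step it splits into two regimes according to the multiplicities $\gamma_1,\gamma_2,\gamma_3$. When $\gamma_1\ge\gamma_2+\gamma_3$ (so the largest absolute value dominates), the odd power sums are non-negative and the even power sums are bounded below by their value at $\tilde a$ via the power-mean inequality, so the expansion \eqref{CHP-PSP} gives the result term-by-term; this is close in spirit to your plan, but only works in the lopsided regime. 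When $\gamma_1\le\gamma_2+\gamma_3$, the paper instead uses the stationarity identity $2r\,h_{2r}(\boldsymbol{x})=h_{2r-2}(\boldsymbol{x},x_i,x_j)$ together with the key scalar bound $a^2+b^2\ge 2/n$ (which was the work of Proposition~\ref{Hunter h_4}) to reduce to the $(r-1)$ case for a rescaled vector in $\mathbb{R}^{n+2}$, invoking the inductive hypothesis. Your step~(b), the discrete sign optimization of $\mathbb{E}(S-T)^{2r}$ with $S\sim\Gamma(p)$, $T\sim\Gamma(q)$, is plausible and would indeed give the closed form at $\tilde a$, but it does not appear in the paper because the induction bypasses the need for it; and in any case it would still have to be proved, not merely asserted via ``Schur-convexity in $(p,q)$.''
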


One may now ask about the maximum of $h_{2r}$ under the normalization condition $\sum_i a_i^2 = 1$. To this end, we observe that a straightforward application of the triangle inequality in identity~\eqref{CHS-moment} reduces the problem to the case where $a_i \geq 0$ and $\sum_i a_i^2 = 1$. We will elaborate on this reduction later when treating the case involving positive coefficients.

Before proceeding to the proof of the conjecture, we present a useful Proposition suggesting that the extrema of $h_{2k}$ are attained under specific structural conditions.
\begin{proposition} \label{Lag integers unconditional}
    Let $n\gr1$ and $d>3$  be a non-negative even  integer. Then the extrema of $h_d$ on the unit sphere $\mathbb{S}^{n-1}$ are of the form $$\boldsymbol{x}=\left(\underbrace{a, \ldots, a}_{\gamma_1 }, \underbrace{b, \ldots, b}_{\gamma_2 },\underbrace{c,\ldots,c}_{\gamma_3 
 }\right).$$ Here, $ a $ appears $ \gamma_1 $ times, $ b $ appears $\gamma_2 $ times, and $ c $ appears $ \gamma_3 $ times, subject to the constraints $\gamma_1 a^2 + \gamma_2 b^2 + \gamma_3 c^2 = 1$ and $\gamma_1 + \gamma_2 + \gamma_3 = n$. 
\end{proposition}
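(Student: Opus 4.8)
The plan is to use Lagrange multipliers to show that any critical point of $h_d$ on $\mathbb{S}^{n-1}$ takes at most three distinct values among its coordinates. Since $\mathbb{S}^{n-1}$ is compact and $h_d$ is smooth, the extrema are critical points, so it suffices to analyze the critical point equations. At a critical point $\boldsymbol{x}$ there is $\lambda\in\R$ with $\partial_i h_d(\boldsymbol{x})=2\lambda x_i$ for every $i$. By \eqref{CHS der} in Lemma~\ref{Hunter lemma}, $\partial_i h_d(\boldsymbol{x})=h_{d-1}(\boldsymbol{x},x_i)$, where $h_{d-1}(\boldsymbol{x},x_i)$ denotes the CHS polynomial of degree $d-1$ in the $n+1$ variables $x_1,\dots,x_n,x_i$. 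Thus the critical point equations read $h_{d-1}(\boldsymbol{x},x_i)=2\lambda x_i$ for all $i$.

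The key step is to convert this into a polynomial identity in one variable. Fix the vector $\boldsymbol{x}$ and consider the function $t\mapsto h_{d-1}(\boldsymbol{x},t)$, i.e. the degree-$(d-1)$ CHS polynomial in the $n+1$ variables $x_1,\dots,x_n,t$, viewed as a polynomial in $t$ alone. From the generating function \eqref{CHS by mgf means}, $\sum_{k\ge0}h_k(\boldsymbol{x},t)s^k=\frac{1}{1-ts}\prod_{j=1}^n\frac{1}{1-x_js}$, so $h_{d-1}(\boldsymbol{x},t)$ is a polynomial in $t$ of degree $d-1$ whose coefficients are CHS polynomials in $\boldsymbol{x}$; explicitly $h_{d-1}(\boldsymbol{x},t)=\sum_{m=0}^{d-1}h_{d-1-m}(\boldsymbol{x})\,t^{m}$. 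Hence the single-variable polynomial
\[
Q(t):=h_{d-1}(\boldsymbol{x},t)-2\lambda t=\Big(\sum_{m=0}^{d-1}h_{d-1-m}(\boldsymbol{x})\,t^{m}\Big)-2\lambda t
\]
has degree exactly $d-1$ (its leading coefficient is $h_0(\boldsymbol{x})=1$), and every coordinate $x_i$ of a critical point is a root of $Q$. Therefore $\boldsymbol{x}$ has at most $d-1$ distinct coordinate values. This already gives a bound, but to get down to three we sharpen the count: $Q$ is a real polynomial of degree $d-1$, and we want to show it can have at most three \emph{distinct real roots} — or, more precisely, that the multiset of coordinate values of an extremizer can be partitioned into at most three constant blocks.

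Here is where the real work lies, and I expect it to be the main obstacle. The naive degree bound gives $d-1$ distinct values, not $3$. To improve it, I would exploit the special structure of $Q(t)=R(t)-2\lambda t$ where $R(t)=\sum_{m=0}^{d-1}h_{d-1-m}(\boldsymbol{x})t^m$ is a \emph{fixed} polynomial depending only on $\boldsymbol{x}$ — in fact $R(t)=t^{d-1}\cdot\big(\text{value at }1/t\text{ of }\sum_k h_k(\boldsymbol{x})u^k\big)$, and by \eqref{CHS by mgf means} the generating series $\sum_k h_k(\boldsymbol{x})u^k=\prod_j(1-x_ju)^{-1}$ is a rational function with poles only at $u=1/x_j$. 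The upshot is that $R(t)-2\lambda t$ differs from the ``truncation'' of a function with strong sign/convexity control. A cleaner route, which I would pursue in parallel, is the deformation argument: suppose an extremizer had four distinct values $a>b>c>e$ occurring with multiplicities; considering the restriction of $h_d$ to the two-parameter family that moves two of these blocks while keeping $\sum x_i^2$ fixed and all else frozen, the second-order conditions (using positive definiteness of $h_{d-2}$ from Theorem~\ref{thm:hunter-positivity} and the Schur--Ostrowski sign information in Theorem~\ref{thm:Tao}(iii)) force a contradiction — one shows the Hessian of the constrained problem restricted to such a block-merging direction is sign-definite, so no interior critical configuration with four blocks can be a local extremum. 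Concretely, along the curve that transfers mass between the $a$-block and the $c$-block (holding the $b$- and $e$-blocks fixed and renormalizing), $h_d$ becomes a one-variable function whose critical points and convexity are governed by lower-degree CHS polynomials evaluated at the configuration; the Schur-convexity of $h_d$ (Theorem~\ref{thm:Tao}(ii)) shows this function is monotone in the ``spread'' of the two blocks once the other coordinates are fixed, so the extremum is pushed to the boundary where two of the four values coincide. Iterating collapses any configuration to at most three distinct values. The delicate point is handling the renormalization (the sphere constraint couples all blocks) and checking that the monotonicity direction is unambiguous; I would isolate this in a lemma about the two-block function
\[
(s,t)\longmapsto h_d\big(\underbrace{s,\dots,s}_{p},\underbrace{t,\dots,t}_{q},\text{fixed rest}\big)
\]
on the arc $ps^2+qt^2=\text{const}$, showing its only interior critical points have $s=t$ unless the ``fixed rest'' is empty, which is exactly the $d=2$ exceptional case excluded by the hypothesis $d>3$.
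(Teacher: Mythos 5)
Your setup is right: Lagrange multipliers plus the derivative identity \eqref{CHS der} gives the critical-point equations $h_{d-1}(\boldsymbol{x},x_i)=2\lambda x_i$, and observing that all coordinates are roots of a single degree-$(d-1)$ polynomial $Q$ is a correct (if weak) consequence. But from there you openly concede you have not closed the argument: the degree bound gives $d-1$ distinct values, and neither of your two sketched routes to sharpen it to $3$ is carried out. This is where the proposal has a genuine gap, and the missing idea is in fact already in your hands. Apply the \emph{difference} identity \eqref{CHS diff} of Lemma~\ref{Hunter lemma} iteratively to the critical equations rather than viewing them as one polynomial in $t$. For distinct $x_i\neq x_j$, subtract the two critical equations and divide by $x_i-x_j$ to obtain $h_{d-2}(\boldsymbol{x},x_i,x_j)=2\lambda$; this is independent of the pair $(i,j)$. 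If a third distinct value $x_k$ exists, subtract the $(i,j)$-equation from the $(i,k)$-equation and divide again to get $h_{d-3}(\boldsymbol{x},x_i,x_j,x_k)=0$. If a fourth distinct value $x_l$ exists, one more application gives $h_{d-4}(\boldsymbol{x},x_i,x_j,x_k,x_l)=0$. Since $d>3$ and $d$ is even, $d-4\ge 0$ is even, the extended vector $(\boldsymbol{x},x_i,x_j,x_k,x_l)$ is nonzero (as $\boldsymbol{x}\in\mathbb{S}^{n-1}$), and Hunter's positivity (Theorem~\ref{thm:hunter-positivity}) makes $h_{d-4}>0$ there --- contradiction. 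Hence at most three distinct coordinate values, which is precisely the claim.

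The deformation/Schur-convexity route as you sketch it does not repair the gap. Majorization $x\prec y$ requires $\sum x_i=\sum y_i$ to be held fixed, so Theorem~\ref{thm:Tao}(ii) controls $h_d$ only along arcs of fixed $\ell_1$-sum, not along the fixed $\ell_2$-sphere arc $ps^2+qt^2=\mathrm{const}$ on which you propose to deform. Without a lemma converting $\ell_2$-constrained motion to a majorization chain (which would itself require a new argument), the ``monotonicity in spread'' you invoke is not available, and the second-order Hessian argument you gesture at is left entirely unproved. The iterated-difference argument above is both shorter and complete, and is what the paper actually does.
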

\begin{proof}
    The unit sphere in $\mathbb{R}^n$ is compact. Therefore, $h_d$ must attain its extrema $\boldsymbol{x}$ on the unit sphere in $\mathbb{R}^n$. The method of Lagrange multipliers ensures that if $\boldsymbol{x}$ is a extrema, there exists $\lambda$  such that 
\begin{equation}
    \frac{\partial h_{d}(\boldsymbol{x})}{\partial x_i} +2\lambda x_i =0
\end{equation}
    for each $i=1,2,\dots,n$. We multiply  by $ x_i $ and sum over all $ i $ to obtain 
    \begin{equation}\label{Lag 2}
    \sum_{i=1}^{n}x_i\frac{\partial h_d(\boldsymbol{x})}{\partial x_i} +2\lambda  =0.
    \end{equation}
    From Euler’s homogeneous function theorem, equation (\ref{Lag 2}) becomes $$dh_{d}(\boldsymbol{x})+2\lambda=0.$$
    Substituting and using the differentiating property  of the CHS polynomials \ref{CHS der}, we obtain 
    \begin{equation}\label{h_{d-1}=dx_ih_d}
    h_{d-1}(\boldsymbol{x}, x_i) = d x_i h_d(\boldsymbol{x})
    \end{equation}
for each $i = 1, 2, \dots, n$.
 
The vector with all coordinates equal  satisfies equation~\eqref{h_{d-1}=dx_ih_d}. Thus, we may assume that there exist coordinates $ x_i \neq x_j $. Applying equation~\eqref{h_{d-1}=dx_ih_d} to $x_i$ and $x_j$, and subtracting the results, combined with the difference property, suggests that
\[
(x_i-x_j)h_{d-2}(\boldsymbol{x},x_i,x_j)=h_{d-1}(\boldsymbol{x},x_i)-h_{d-1}(\boldsymbol{x},x_j)=d(x_i-x_j)h_{d}(\boldsymbol{x}).
\]
Then,
\begin{equation}\label{dh_d=h_{d-2}}
h_d(\boldsymbol{x})=\frac{h_{d-2}(\boldsymbol{x},x_i,x_j)}{d}.
\end{equation}
Assume that there exists a third distinct coordinate $x_k \neq x_i, x_j$. By applying relation (\ref{dh_d=h_{d-2}}) once again for $x_i$ and $x_k$, and subtracting as before, we obtain
\begin{equation}\label{Lag d-3}
h_{d-3}(\boldsymbol{x}, x_i, x_j,x_k) = 0.
\end{equation}
If we further consider,  $x_l\neq x_i,x_j,x_k$ in the same manner we obtain
$$
h_{d-4}(\boldsymbol{x}, x_i, x_j, x_k, x_l) = 0.
$$
For $d\gr4$, since $d - 4$ is even, the positivity of the even degree CHS polynomials  leads to a contradiction.
\end{proof}
We proceed by deriving sharp bounds for $h_4$ through appropriate estimates of its extrema. This will play a crucial role as the inductive step.
\begin{proof}[Proof of Proposition \ref{Hunter h_4}]
For the maximum notice that 
\[
h_4(a_1,\ldots,a_n)=\frac{1}{4!}\E\left(\sum_{i=1}^na_iX_i \right)^4\ls \frac{1}{4!}\E\left(\sum_{i=1}^n|a_i|X_i \right)^4\ls h_4\left(\frac{1}{\sqrt{n}},\ldots,\frac{1}{\sqrt{n}} \right),
\]
where we used Corollary~\ref{Cor Gamma pos k} for $k=4$.

For the minimum, we shall use the method of Lagrange multipliers as in proof of Proposition \ref{Lag integers unconditional} to bound the extrema of $h_4(a_1,\ldots,a_n)$ which exist since the domain is compact. We are searching for all $\boldsymbol{x}=(x_1,\ldots,x_n)$ and the real number $\lambda$. By following the preceding argument verbatim, we find that Relation (\ref{Lag d-3}) asserts that
 $$h_1(\boldsymbol{x},x_i,x_j,x_k)=0$$
or equivalently, if we set $S := \sum_{i=1}^n x_i$, 
\begin{equation}\label{S+a+b+c=0}
 S + x_i + x_j + x_k = 0.
 \end{equation}
We also obtained the following identity (\ref{dh_d=h_{d-2}}): $$  4\cdot h_4(\boldsymbol{x})=h_2(\boldsymbol{x},x_i,x_j)=\frac{1}{2}\left\{\sum_{m=1}^n x_m^2+x_i^2+x_j^2+ \left( S+x_i+x_j\right)^2 \right \}=\frac{1}{2}\left(1+x_i^2+x_j^2+x_k^2 \right).$$ As established in Proposition \ref{Lag integers unconditional}, the extrema are attained under specific structural conditions, that is $$\boldsymbol{x}=\left(\underbrace{a, \ldots, a}_{\gamma_1 }, \underbrace{b, \ldots, b}_{\gamma_2 },\underbrace{c,\ldots,c}_{\gamma_3 
}\right),$$ where $\gamma_1a^2+\gamma_2b^2+\gamma_3c^2=1$. For the moment, we assume that the parameters $a$, $b$, and $c$ are all distinct.

Thus, relation (\ref{S+a+b+c=0}) suggests than it suffices to lower bound for $a^2+b^2+c^2$ under the conditions 
$$\begin{cases}
(\gamma_1+1)a+(\gamma_2+1)b+(\gamma_3+1)c=0&\\
\gamma_1 a^2+\gamma_2b^2+\gamma_3 c^2=1
\end{cases}$$

with $\gamma_1,\gamma_2,\gamma_3\gr1$ and $\gamma_1+\gamma_2+\gamma_3=n$.

A direct computation shows that $h_4(\tilde{a}) = \frac{1}{8} + \frac{1}{4n}$. Thus, it remains to prove the inequality
\[
 a^2+b^2+c^2 \gr \frac{2}{n-1}
\] 

Without loss of generality, we may assume that $c \neq 0$. We write this as
\[ 
a^2+b^2+c^2=\frac{(a/c)^2+(b/c)^2+1}{\gamma_1 (a/c)^2+\gamma_2(b/c)^2+\gamma_3}.
\]
By setting $x:=\frac{a}{c}$ and $y:=\frac{b}{c}$, and using the fact that $(\gamma_1+1)x+(\gamma_2+1)y+(\gamma_3+1)=0$, we reduce the bound to the following quadratic inequality:
\begin{align*}
&x^2 \left[ (\gamma_2 + 1)^2 \left(1 - \frac{2\gamma_1}{n - 1} \right)
        + (\gamma_1 + 1)^2 \left(1 - \frac{2\gamma_2}{n - 1} \right) \right]  + 2x (\gamma_1 + 1)(\gamma_3 + 1) \left(1 - \frac{2\gamma_2}{n - 1} \right) \\
&\quad + (\gamma_3 + 1)^2 \left(1 - \frac{2\gamma_2}{n - 1} \right)
        + (\gamma_2 + 1)^2 \left(1 - \frac{2\gamma_3}{n - 1} \right) \gr 0
\end{align*}
Setting now $d_i:=\gamma_i-1\geq 0$, we notice that the coefficient in front of $x^2$ is non-negative, since 
$$
(d_2+2)^2(d_2+d_3-d_1)+(d_1+2)^2(d_1+d_3-d_2)=(d_1-d_2)^2(d_1+d_2+4)+d_3\left[ (d_2+2)^2+(d_1+2)^2 \right] \gr0.
$$
The discriminant $\Delta$, equals
\begin{align*}
-4(\gamma_2 + 1)^2 \cdot \Big[ \,
    &(\gamma_3 + 1)^2 \left(1 - \frac{2\gamma_2}{n - 1} \right)\left(1 - \frac{2\gamma_1}{n - 1} \right) 
    +(\gamma_2 + 1)^2 \left(1 - \frac{2\gamma_1}{n - 1} \right)\left(1 - \frac{2\gamma_3}{n - 1} \right) \\
    +\,  &(\gamma_1 + 1)^2 \left(1 - \frac{2\gamma_2}{n - 1} \right)\left(1 - \frac{2\gamma_3}{n - 1} \right)
\Big].
\end{align*}
We will prove that $\Delta$ is non-positive. Due to symmetry, we may assume that $\gamma_1\geq \gamma_2\geq \gamma_3.$
Then substituting $n=\gamma_1+\gamma_2+\gamma_3$, it suffices to prove that 
$$\sum (d_1+2)^2(d_1+d_2-d_3)(d_1+d_3-d_2)\geq 0.$$
If $d_3=0$, then the inequality can be rewritten in the form 
$$(d_1 - d_2)^2 (d_1^2 + 2d_1d_2+d_2^2+4d_1+4d_2- 4 ) \geq 0,$$
which is true.\\
If $d_3=1$, then the inequality is equivalent to 
$$(d_1^2-d_2^2)^2+(d_1+d_2)(d_1-d_2)^2+(2d_1^3-10d_1^2+4d_1+1+17d_1d_2)+(2d_2^3-10d_2^2+4d_2+1+17d_1d_2)\geq 0,$$ which holds.\\
Finally, if $d_1,d_2,d_3\geq 2$, we rewrite the inequality as 
$$\sum (d_1+2)^2(d_1-d_3)(d_1-d_2)+\sum (d_1+2)^2d_3(d_1-d_3)+\sum (d_1+2)^2d_2d_3\geq 0.$$
The last sum is clearly non-negative. For the first sum notice that it can be expressed as $$(d_1-d_2)\left[(d_1+2)^2(d_1-d_2)-(d_2+2)^2(d_2-d_3) \right]+(d_3+2)^2(d_3-d_2)(d_3-d_1)\gr0$$ since $d_1\gr d_2\gr d_3$.  For the second one, after collecting the same terms, equals to 
$$\sum (d_1-d_3)^2(d_1d_3-4)\geq 0,$$ which is again true, since $d_1,d_2,d_3\geq 2$.

In the case where $\boldsymbol{x}$ has exactly two distinct coordinates, equation (\ref{Lag d-3}) does not hold. Without loss of generality assume $b\neq0$. In this  case $$\boldsymbol{x}=(\underbrace{a, \ldots, a}_{\gamma_1 \text{ times}},\underbrace{b, \ldots, b}_{\gamma_2 \text{ times}})$$  and $\gamma_1a^2+\gamma_2b^2=1$ holds. In this case, from relation~\eqref{dh_d=h_{d-2}}, we need to lower bound
\[
a^2 + b^2 + \left( (\gamma_1 + 1)a + (\gamma_2 + 1)b \right)^2.
\]
Then we will find the best constant $\frac{2}{n-1}\geq c\geq\frac{2}{n}$, such that the inequality 
\begin{equation} \label{sim}  
a^2+b^2+\left [ (\gamma_1+1)a+(\gamma_2+1)b \right]^2\gr c(\gamma_1a^2+\gamma_2b^2)
\end{equation}
holds for all $a,b$. This can be equivalently expressed as 
\[
x^2 \left[1+(\gamma_1+1)^2-c\gamma_1 \right]+2x(\gamma_1+1)(\gamma_2+1)+(\gamma_2+1)^2+1-c\gamma_2\gr0.
\]
Notice that since $c\leq 1$ we have 
\[
1+(\gamma_1+1)^2-c\gamma_1\gr0
\]
and that
\[
\frac{\Delta}{4}=(\gamma_1+1)^2(\gamma_2+1)^2-\left(1+(\gamma_1+1)^2-c\gamma_1\right)\left(1+(\gamma_2+1)^2-c\gamma_2\right).
\]
We use the fact that $\gamma_1+\gamma_2=n$, to write the last one as a function of $\gamma_1$. 
The derivative of this function with respect to $\gamma_1$ equals to
$$(-c^2 + c n + 4 c + 2) (n - 2\gamma_1).$$
The first parenthesis is of course non-negative, therefore the function is increasing for $\gamma_1\leq n/2$ and decreasing for
$\gamma_1\geq n/2$. If $n$ is even, then it takes its maximum for $\gamma_1=n/2$. The maximum equals to 
$$\frac{1}{4}(-2 + c n) (6 + 4 n - c n + n^2),$$ which is non-positive for $c\leq\frac{2}{n}$. Therefore, for even $n$ we have that 
\begin{equation} \label{sim}  
a^2+b^2+\left [ (\gamma_1+1)a+(\gamma_2+1)b \right]^2\gr \frac{2}{n}(\gamma_1a^2+\gamma_2b^2)
\end{equation}
and the equality holds when $\gamma_1=\gamma_2=n/2$ and $a=-b$.

In the case where $n$ is odd, $\gamma_1$ cannot be equal to $n/2$, therefore the function takes its maximum for $\gamma_1=\frac{n-1}{2}$ (or $\gamma_1=\frac{n+1}{2}$). In the first case, the discriminant is equal to 
$$\frac{1}{4}\left(c^2 (1-n^2)+ c (-4 + 7 n + 4 n^2 + n^3) - 2 (7 + 4 n + n^2) \right).$$
The last one is non-positive if and only if $c\leq \rho_1(n)$ or $c\geq\rho_2(n)$. However, $\rho_2(n)\geq\frac{2}{n-1}$, therefore, the largest value that $c$ can take is 
$$c=\rho_1(n)=\frac{-4 + 7 n + 4 n^2 + n^3 - (n+3)\sqrt{8 - 8 n + n^2 + 2 n^3 + n^4} }{2 n^2-2}.$$
Note that $$\rho_1(n)\sim\frac{2}{n}$$ as $n\to +\infty$. We conclude that for $n$ odd the inequality \begin{equation} \label{sim}  
a^2+b^2+\left [ (\gamma_1+1)a+(\gamma_2+1)b \right]^2\gr \rho_1(n)(\gamma_1a^2+\gamma_2b^2)
\end{equation}
holds, and we have equality when $\gamma_1=(n-1)/2$, $\gamma_2=(n+1)/2$ and $\frac{a}{b}=x$, where $x$ is the minimum value of the function $$\frac{x^2+1+\left(\frac{n+1}{2}x+\frac{n+3}{2}\right)^2}{\frac{n-1}{2}x^2+\frac{n+1}{2}}.$$
\end{proof}
We now proceed with the proof of Hunter's  conjecture.
\begin{proof}[Proof of Theorem \ref{Hunter conj}]
We will prove, by induction on $k$, that every extremum $\boldsymbol{x}$ of $h_{2k}$ on the sphere $\mathbb{S}^{n-1}$, when $n$ is even, satisfies
\[
  h_{2k}(\boldsymbol{x})\gr  h_{2k}(\tilde{a})=\frac{(n/2+k-1)!}{k!\cdot(n/2-1)!\cdot n^k}
\]
We have already established the cases $k = 1$ and $k = 2$. Now, assume the statement holds for $k - 1$ and that the extrema are of the form $$\boldsymbol{x}=\left(\underbrace{a, \ldots, a}_{\gamma_1 }, \underbrace{b, \ldots, b}_{\gamma_2 },\underbrace{c,\ldots,c}_{\gamma_3 
 }\right),$$ where $\gamma_1a^2+\gamma_2b^2+\gamma_3c^2=1$.   

 Due to the symmetry, we can assume that $a>|b|>|c|$.

If $\gamma_1 \gr \gamma_2+\gamma_3$ then $h_{2k}(\boldsymbol{x})\gr  h_{2k}(\tilde{a})$. Indeed, in this case we have that $$p_{2m+1}(\boldsymbol{x})=\gamma_1a^{2m+1}+\gamma_2b^{2m+1}+\gamma_3c^{2m+1}\gr 0= p_{2m+1}(\tilde{a})$$ and using the Power-Mean inequality
$$\left(\frac{\gamma_1a^{2m}+\gamma_2b^{2m}+\gamma_3c^{2m}}{\gamma_1+\gamma_2+\gamma_3}\right)^{1/m}\gr\frac{\gamma_1a^{2}+\gamma_2b^{2}+\gamma_3c^{2}}{\gamma_1+\gamma_2+\gamma_3}=\frac{1}{n},$$
which can be written as 
$$p_{2m}(\boldsymbol{x})\gr p_{2m}(\tilde{a}).$$
Then, from identity~\eqref{CHP-PSP}, which expresses the CHS polynomial solely in terms of the power-sum polynomials, we conclude the desired inequality.

If $\gamma_1\ls \gamma_2+\gamma_3$, we proceed using the already established relation~\eqref{dh_d=h_{d-2}}
\begin{align*}
h_{2k}(\boldsymbol{x})&=\frac{h_{2k-2}(a[\gamma_1+1],b[\gamma_2+1])}{2k}\\
&=\frac{(1+a^2+b^2)^{k-1}}{2k}h_{2(k-1)}\left(\frac{\boldsymbol{x}}{\sqrt{1+a^2+b^2}},\frac{a} {\sqrt{1+a^2+b^2}},\frac{b}{\sqrt{1+a^2+b^2}}\right).
\end{align*}
In this case, we obtain that $$a^2+b^2\gr \frac{2}{n},$$ which helps us to complete the induction.  Assume now that the extrema is of the form $$\boldsymbol{x}=(\underbrace{a, \ldots, a}_{\gamma_1 \text{ times}},\underbrace{b, \ldots, b}_{\gamma_2 \text{ times}})$$ where $a,b$ are distinct and appear $\gamma_1$ and $\gamma_2$ times respectively and thus also $\gamma_1 a^2+\gamma_2b^2=1$. Setting $b=c$ in the argument above which helps us to complete the induction.

\end{proof}
\section{The Non-Negative Coefficients Case}\label{sec:positive}

In this section we study the extremal behaviour of moments and complete
homogeneous symmetric polynomials when the coefficients are constrained to be
non-negative. Throughout we assume $a_i\ge0$ and $\sum_{i=1}^n a_i^2=1$.

For positive integer moments up to order four we have the following
Schur-concavity result.

\begin{theorem} \label{Schur-gamma}
Let  $X_1,X_2,\ldots$ be independent and identically  distributed standard exponential random variables. For any positive integer $k \leq 4$ and $n\in\mathbb{N}$, the function
\[
(x_1,\ldots,x_n) \mapsto \E\left(\sum_{j=1}^n \sqrt{x_j}X_j\right)^{k}
\]
is Schur-concave on $\mathbb{R}^n_+$.
\end{theorem}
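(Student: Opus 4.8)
The plan is to apply the Schur--Ostrowski criterion (Theorem~\ref{thm:Schur-Ostrowski}) to $f(x):=\E\bigl(\sum_{j}\sqrt{x_j}X_j\bigr)^{k}=k!\,h_k(\sqrt{x_1},\dots,\sqrt{x_n})$. Since $f$ is continuous on $\R^n_+$ and $C^\infty$ on the open orthant, it suffices to verify the sign condition on $(0,\infty)^n$ and then transport it to the boundary by continuity along Robin Hood transfers. Differentiating through the substitution $a_i=\sqrt{x_i}$ and invoking \eqref{CHS der} gives $\partial f/\partial x_i=\tfrac{k!}{2\sqrt{x_i}}\,h_{k-1}(\sqrt{x},\sqrt{x_i})$. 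Writing $a=\sqrt{x_i}$, $b=\sqrt{x_j}$ and letting $z$ be the vector of the remaining $\sqrt{x_\ell}$, one has $h_{k-1}(\sqrt x,\sqrt{x_i})=h_{k-1}(z,a,a,b)$ and $h_{k-1}(\sqrt x,\sqrt{x_j})=h_{k-1}(z,a,b,b)$, so after clearing the positive factors $a,b,a+b$ the required inequality $(x_i-x_j)(\partial_i f-\partial_j f)\le 0$ becomes
\[
(a-b)\bigl(b\,h_{k-1}(z,a,a,b)-a\,h_{k-1}(z,a,b,b)\bigr)\le 0 .
\]

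The heart of the argument is to identify the inner difference via generating functions. Put $Z(t)=\prod_\ell(1-z_\ell t)^{-1}$; by \eqref{CHS by mgf means}, $\sum_m h_m(z,a,a,b)\,t^m=Z(t)(1-at)^{-2}(1-bt)^{-1}$, and with the roles of $a,b$ swapped for the other polynomial, a direct computation yields
\[
b\,h_{k-1}(z,a,a,b)-a\,h_{k-1}(z,a,b,b)=(b-a)\,[t^{k-1}]\!\left(Z(t)\,\frac{1-(a+b)t}{(1-at)^2(1-bt)^2}\right).
\]
Thus the Schur--Ostrowski inequality is equivalent to the single coefficient bound
\[
[t^{k-1}]\!\left(Z(t)\,\frac{1-(a+b)t}{(1-at)^2(1-bt)^2}\right)\ge 0\qquad\text{for all }a,b\ge0,\ z\in\R^{n-2}_+,
\]
the cancelled factor $-(a-b)^2\le 0$ being exactly what produces the correct sign. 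Using $1-(a+b)t=(1-at)(1-bt)-abt^2$, this coefficient equals $h_{k-1}(z,a,b)-ab\,h_{k-3}(z,a,a,b,b)$, with the convention $h_m\equiv 0$ for $m<0$.

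To conclude I would peel off the $z$-block with the convolution identity $h_m(z,a,b)=\sum_{i=0}^m h_{m-i}(a,b)h_i(z)$ (and likewise for the five-variable term), which rewrites $h_{k-1}(z,a,b)-ab\,h_{k-3}(z,a,a,b,b)$ as a combination with non-negative coefficients $h_i(z)\ge0$ of the two-variable expressions $h_{j-1}(a,b)-ab\,h_{j-3}(a,a,b,b)$ with $3\le j\le k$, plus the manifestly non-negative tail $h_1(a,b)h_{k-2}(z)+h_0(a,b)h_{k-1}(z)$. The two-variable inequalities $h_{j-1}(a,b)\ge ab\,h_{j-3}(a,a,b,b)$ are checked directly and hold for every $1\le j\le 4$: for $j=1,2$ the subtracted term vanishes; for $j=3$ it is $a^2+ab+b^2\ge ab$, i.e.\ $a^2+b^2\ge0$; for $j=4$ it is $a^3+a^2b+ab^2+b^3\ge 2ab(a+b)$, i.e.\ $(a-b)^2(a+b)\ge0$. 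Since $k\le4$, only these indices occur, and the proof is complete. The same scheme also explains why $k\le4$ is sharp: the $j=5$ instance would require $h_4(a,b)\ge ab\,h_2(a,a,b,b)$, i.e.\ $a^4-2a^3b-3a^2b^2-2ab^3+b^4\ge0$, which fails already at $a=b$.

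The step I expect to be the main obstacle is the generating-function identity of the second paragraph: recognizing that the asymmetric combination $b\,h_{k-1}(z,a,a,b)-a\,h_{k-1}(z,a,b,b)$ factors as $(b-a)$ times a coefficient of the simple rational function $Z(t)\frac{1-(a+b)t}{(1-at)^2(1-bt)^2}$ is what makes the whole sign analysis collapse to a handful of elementary polynomial inequalities. A secondary, purely technical wrinkle is that $f$ is not differentiable on $\partial\R^n_+$; this is absorbed by continuity of $f$ together with the standard reduction of a majorization relation to finitely many two-coordinate averaging steps, along each of which $f$ is smooth except possibly at the endpoints.
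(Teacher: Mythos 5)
Your proposal is correct and is essentially the same argument as the paper's: both proofs apply the Schur--Ostrowski criterion, reduce the sign condition to the non-negativity of Taylor coefficients of the rational function $\tfrac{1-(a+b)t}{(1-at)^2(1-bt)^2}$ multiplied by the non-negative-coefficient series $Z(t)=\prod_\ell(1-z_\ell t)^{-1}$, and then verify those coefficients up to order $4$; your two-variable inequalities $h_{j-1}(a,b)\ge ab\,h_{j-3}(a,a,b,b)$, $j\le4$, are exactly the coefficients the paper expands in its Taylor series, including the $j=5$ failure the paper records in its remark. The only cosmetic difference is that the paper differentiates the moment generating function $F_t=\prod(1-t\sqrt{x_j})^{-1}$ directly, whereas you reach the same rational factor via the CHS derivative and difference identities of Lemma~\ref{Hunter lemma} and then package the coefficient check combinatorially.
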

Note that for $k>4$ Schur-concavity or Schur-convexity breaks. 

As an immediate corollary, we obtain two-sided moment bounds in terms of the
extreme non-negative configurations.

\begin{corollary}\label{Cor Gamma pos k}
For  $X_1,X_2,\ldots$ i.i.d standard exponential random variables. For any positive integer $k \leq 4$ and $n\in\mathbb{N}$,
\[
\mathbb{E}X_1^k \leq \mathbb{E}\left( \sum_{j=1}^{n} a_j X_j \right)^k \leq \mathbb{E} \left( \frac{X_1 + \cdots + X_n}{\sqrt{n}} \right)^k.
\]
\end{corollary}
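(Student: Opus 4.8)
The plan is to deduce the two-sided bound directly from the Schur-concavity statement of Theorem~\ref{Schur-gamma} together with the majorization chain already recorded in the preliminaries. Set $x_j = a_j^2$, so that the constraint $\sum a_i^2 = 1$ becomes $\sum x_j = 1$ with $x_j \ge 0$, and the map in question is exactly $F(x_1,\dots,x_n) = \E\big(\sum_j \sqrt{x_j}\,X_j\big)^k = \E\big(\sum_j a_j X_j\big)^k$. By Theorem~\ref{Schur-gamma}, $F$ is Schur-concave on $\R^n_+$ for $k \le 4$, so $u \prec v$ implies $F(u) \ge F(v)$.

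Now I invoke the majorization sandwich \eqref{maj seq}: for any $a\in\R^n_+$ with $\sum a_i^2 = 1$,
\[
\Big(\tfrac1n,\dots,\tfrac1n\Big) \prec (a_1^2,\dots,a_n^2) \prec (1,0,\dots,0).
\]
Applying Schur-concavity to the left relation gives $F\big(\tfrac1n,\dots,\tfrac1n\big) \ge F(a_1^2,\dots,a_n^2)$, i.e.
\[
\E\Big(\frac{X_1+\cdots+X_n}{\sqrt n}\Big)^k \ge \E\Big(\sum_{j=1}^n a_j X_j\Big)^k,
\]
which is the upper bound. Applying it to the right relation gives $F(a_1^2,\dots,a_n^2) \ge F(1,0,\dots,0) = \E X_1^k$, which is the lower bound. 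Combining the two yields the claimed inequality.

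There is essentially no obstacle here: the only subtlety is the trivial bookkeeping that $F$ is genuinely symmetric in its arguments (so that Schur-concavity applies to the unordered vectors appearing in \eqref{maj seq}), and that the endpoints of the majorization chain evaluate to the stated expressions — indeed $F(1,0,\dots,0) = \E X_1^k$ by taking a single nonzero coefficient equal to $1$, and $F(\tfrac1n,\dots,\tfrac1n) = \E\big(\tfrac{1}{\sqrt n}\sum_j X_j\big)^k$ since all $\sqrt{x_j} = 1/\sqrt n$. All the real work is contained in Theorem~\ref{Schur-gamma}, whose proof (via the Schur--Ostrowski criterion of Theorem~\ref{thm:Schur-Ostrowski} applied to the explicit moment polynomial) is carried out separately.
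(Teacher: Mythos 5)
Your proposal is correct and matches the paper's own argument exactly: the paper proves the corollary by applying the Schur-concave map from Theorem~\ref{Schur-gamma} to the majorization chain \eqref{maj seq}. You have simply spelled out the two endpoint evaluations that the paper leaves implicit.
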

\begin{remark}
 By a similar argument, Theorem~\ref{Schur-gamma} remains valid when the standard exponential random variables are replaced with $\mathrm{Gamma}(\gamma)$ random variables, for any positive integer $k \leq 2\gamma + 2$.

\end{remark}
Fot our next results we need the following definition.
\begin{defn}
Let \( X_1, X_2, \ldots, X_n \) be i.i.d.\ standard exponential random variables. For a real number \( q \), we define
\[
\rho(1, q) := \mathbb{E}[X_1^q], \quad \rho(2, q) := \mathbb{E} \left( \frac{X_1 + X_2}{\sqrt{2}} \right)^q,
\]
and for general \( n \in \mathbb{N} \),
\[
\rho(n, q) := \mathbb{E} \left( \frac{X_1 + \cdots + X_n}{\sqrt{n}} \right)^q.
\]
\end{defn}

\begin{theorem}\label{pos k > n+2}
Let $k$ be a non-negative integer, and let $a_1, \ldots, a_n$ be non-negative real numbers such that $\sum_{i=1}^n a_i^2 = 1$. If $X_1, \ldots, X_n$ are i.i.d.\ standard exponential random variables, then
\[
\mathbb{E}\left( a_1 X_1 + \cdots + a_n X_n \right)^k \ge \min\{ \rho(1, k), \ldots, \rho(n, k) \},
\]
while the maximum will occur at a unit vector with nonzero coordinates, $(n-1)$ of which are equal, that is, of the form
\[
(a_1,\dots,a_n) = (s,\underbrace{t,\dots,t}_{n-1\ \text{times}}),
\]
with $t\leq s$ and $(s,t)$ explicitly determined by $k$ and $n$ as a root of an explicit polynomial, see the Remark below for details.
\end{theorem}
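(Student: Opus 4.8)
The plan is to combine the Lagrange-multiplier reduction used above with a close study of $h_k$ on vectors carrying at most two distinct nonzero entries. Throughout, $k!\,h_k(a)=\E\big(\sum_j a_jX_j\big)^k$, and $h_k$ attains a maximum and a minimum on the compact set $K=\{a\in\R^n_{\gr0}:\sum_i a_i^2=1\}$. If $1\le k\le4$ the statement is immediate from Theorem~\ref{Schur-gamma} and Corollary~\ref{Cor Gamma pos k}: Schur-concavity puts the minimum at the spike $(1,0,\dots,0)$ and the maximum at the flat vector, one checks $\rho(1,k)=\min\{\rho(1,k),\dots,\rho(n,k)\}$ in this range, and the flat vector is the degenerate case $s=t$ of $(s,t,\dots,t)$. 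So assume $k\gr5$. I would first establish the structural reduction: every extremum of $h_k$ on $K$ has at most two distinct nonzero values, i.e.\ up to permutation it equals $\big(\underbrace{s,\dots,s}_{\gamma_1},\underbrace{t,\dots,t}_{\gamma_2},\underbrace{0,\dots,0}_{\gamma_3}\big)$ with $\gamma_1+\gamma_2+\gamma_3=n$, $\gamma_1 s^2+\gamma_2 t^2=1$, $\gamma_1\gr1$ and $s\gr t\gr0$. Restricting an extremum to its support and running the argument in the proof of Proposition~\ref{Lag integers unconditional} verbatim (with degree $k$ in place of $d$) up to the analogue of~\eqref{Lag d-3}, namely $h_{k-3}(x,x_i,x_j,x_k)=0$ for three distinct coordinates, we would force $h_{k-3}$ of a vector with all coordinates positive to vanish; but $k-3\gr2$ and $h_{k-3}\gr0$ on the nonnegative orthant with equality only at the origin, a contradiction. (Here the sign constraint buys strictly more than in Proposition~\ref{Lag integers unconditional}, where three distinct values remain possible.)

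For the maximum I would first exclude zero coordinates. If a maximiser $a$ of $h_k$ on $K$ had $a_i=0$ for some $i$, then by \eqref{CHS der} together with $a\ne0$ and $k\gr2$,
\[
\frac{\partial h_k}{\partial a_i}(a)=h_{k-1}(a_1,\dots,a_n)>0,
\]
contradicting the Karush--Kuhn--Tucker condition $\partial h_k/\partial a_i(a)\ls0$ forced at a boundary maximiser (the multiplier of the active constraint $a_i\gr0$ is nonnegative). Hence the maximiser is $(s[\gamma_1],t[\gamma_2])$ with $\gamma_1+\gamma_2=n$ and $s\gr t>0$, the flat vector being the case $s=t$. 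Put $r=t/s\in(0,1]$; using
\[
h_k\big(1[\gamma_1],r[\gamma_2]\big)=\sum_{j=0}^{k}\binom{\gamma_1+k-1-j}{k-j}\binom{\gamma_2+j-1}{j}\,r^{j},
\]
the slice problem becomes the maximisation of $\phi_{\gamma_1,\gamma_2}(r):=(\gamma_1+\gamma_2 r^2)^{-k/2}h_k\big(1[\gamma_1],r[\gamma_2]\big)$ over $r\in(0,1]$. A short computation from Euler's identity and \eqref{CHS der} gives
\[
\sgn\phi_{\gamma_1,\gamma_2}'(r)=\sgn\!\Big(h_{k-1}\big(1[\gamma_1],r[\gamma_2+1]\big)-r\,h_{k-1}\big(1[\gamma_1+1],r[\gamma_2]\big)\Big),
\]
and the polynomial on the right, which vanishes at $r=1$, changes sign at most once on $(0,1)$, and there from $+$ to $-$; consequently $\phi_{\gamma_1,\gamma_2}$ has at most one critical point in $(0,1)$, necessarily a local maximum. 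Comparing the resulting candidate maxima over $\gamma_1=1,\dots,n-1$ then isolates $\gamma_1=1$, $\gamma_2=n-1$, and the optimal pair $(s,t)$ is characterised by $s^2+(n-1)t^2=1$ together with $k\,h_k(s,t[n-1])=h_{k-2}(s[2],t[n])$, which is \eqref{dh_d=h_{d-2}} applied to the distinct coordinates $s$ and $t$; eliminating one variable via $r=t/s$ yields the explicit polynomial recorded in the Remark.

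For the lower bound I would show that on each slice $(s[\gamma_1],t[\gamma_2],0[\gamma_3])$ the minimum of $h_k$ is attained at an endpoint. The $\gamma_3$ zeros being inert, the value on the slice is again $\phi_{\gamma_1,\gamma_2}(r)$ with $r=t/s\in[0,1]$, and $\phi_{\gamma_1,\gamma_2}(0)=h_k(a^{(\gamma_1)})$, $\phi_{\gamma_1,\gamma_2}(1)=h_k(a^{(\gamma_1+\gamma_2)})$, where $a^{(m)}$ is the unit vector with $m$ equal nonzero entries, so that $k!\,h_k(a^{(m)})=\rho(m,k)$. The same ``at most one interior critical point, and it is a maximum'' statement gives $\min_{[0,1]}\phi_{\gamma_1,\gamma_2}=\min\{\phi_{\gamma_1,\gamma_2}(0),\phi_{\gamma_1,\gamma_2}(1)\}$, so by the structural reduction every minimiser of $h_k$ on $K$ equals some $a^{(m)}$ with $1\le m\le n$, whence
\[
\E\Big(\sum_{j=1}^n a_jX_j\Big)^{k}=k!\,h_k(a)\ \gr\ k!\min_{1\le m\le n}h_k(a^{(m)})=\min\{\rho(1,k),\dots,\rho(n,k)\}.
\]

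I expect the main obstacle to be the unimodality of $\phi_{\gamma_1,\gamma_2}$ on $[0,1]$ — equivalently, that $h_{k-1}(1[\gamma_1],r[\gamma_2+1])-r\,h_{k-1}(1[\gamma_1+1],r[\gamma_2])$ has at most one sign change on $(0,1)$ — together with, for the maximum, the comparison across profiles that forces $\gamma_1=1$. Both reduce to delicate monotonicity of ratios of binomial sums; the natural tool is the interpolation formula~\eqref{interpol}, in its confluent form at vectors with repeated coordinates, and it is at this stage that the hypothesis $k\gr5$ and, in the sharp statement, the dependence of the optimal support size on $k$ and $n$ genuinely enter.
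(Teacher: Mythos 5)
The structural reduction to vectors carrying at most two distinct positive values is correct and is the same as Proposition~\ref{Lagrange integers positive}; your KKT-style exclusion of zero coordinates at a maximiser is a clean observation the paper does not use (it instead deduces the singleton-group structure of both extrema from the power-sum argument of Lemma~\ref{a,b,c lemma} together with Cirtoaje's characterisation). The core difficulty, however, is precisely the step you flag as "the main obstacle", and it is not merely unproven but generically false. You claim that the polynomial $P(r):=h_{k-1}(1[\gamma_1],r[\gamma_2+1])-r\,h_{k-1}(1[\gamma_1+1],r[\gamma_2])$, which governs $\sgn\phi_{\gamma_1,\gamma_2}'$, changes sign at most once in $(0,1)$. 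The paper studies exactly this object (its $g$ is the same polynomial up to the inversion $r\mapsto1/r$) and shows by Descartes that it has \emph{up to three} positive roots, one of which is always at the all-equal point. A second-derivative test then shows that for $k>n+2$ the remaining two roots straddle $1$, so on $(0,1]$ the slice is indeed unimodal with an interior maximum — exactly the behaviour you want — but for $k<n+2$ the two extra roots can lie on the \emph{same} side of $1$, producing an interior local minimum of $\phi_{\gamma_1,\gamma_2}$, and the paper explicitly records that "both scenarios are possible so this argument cannot work." That is why the paper abandons the critical-point classification for $k\leq n+1$ and instead runs a genuinely separate induction on $k$, showing directly that $\phi_{n-1,1}(x)\geq\phi_{n-1,1}(\infty)$ for $x\geq1$ via the formula for $\phi'$ at its critical points and a monotonicity estimate. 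Your lower-bound conclusion ("every minimiser equals some $a^{(m)}$") and your maximum conclusion therefore both rest on a false premise in the regime of moderate $k$, and the cross-profile comparison ("comparison … isolates $\gamma_1=1$") is additionally left unfilled — the paper gets that from Lemma~\ref{a,b,c lemma}, not from comparing one-variable slices. A correct completion of your plan would require reproducing the paper's dichotomy and the inductive argument for $k\leq n+1$, which is a substantial new idea not present in the proposal.
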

In the following remarks we explain in details the behavior of the minimizer and the maximizer respectively.

\begin{remark}
We fix some $k$. In order to find the minimum of $\rho(s,k)$, consider the function $g:(0,\infty)\to(0,\infty)$,
\begin{equation}\label{eq:g-product}
  g(n)=\frac{\Gamma(n+k)}{n^{k/2}\,\Gamma(n)}=\frac{\prod_{j=0}^{k-1}(n+j)}{n^{k/2}}.
\end{equation}
Differentiating the logarithm with respect to $n$ gives
\[
  \frac{g'(n)}{g(n)}
  =\frac{d}{dn}\bigl(\ln g(n)\bigr)
  =\sum_{j=0}^{k-1}\frac{1}{n+j}-\frac{k}{2}\,\frac{1}{n}:=h(n).
\]
Since $g(n)>0$ for all $n>0$, the sign of $g'(n)$ coincides with the sign of $h(n)$, and the critical points of $g$ in $(0,\infty)$ are exactly the zeros of $h$.

To analyse $h$, first multiply by $n>0$:
\[
  n\,h(n)
  =\sum_{j=0}^{k-1}\frac{n}{n+j}-\frac{k}{2}
  =\sum_{j=0}^{k-1}\frac{1}{1+\frac{j}{n}}-\frac{k}{2}.
\]
Introduce the new variable
\(
  x=\frac{1}{n}>0
\)
and define
\[
  F(x)=\sum_{j=0}^{k-1}\frac{1}{1+jx}-\frac{k}{2}.
\]
Then
\[
  h(n)=0 \quad\Longleftrightarrow\quad F\!\left(\frac{1}{n}\right)=0.
\]
The derivative of $F$ is
\[
  F'(x)=\sum_{j=0}^{k-1}\frac{d}{dx}\left(\frac{1}{1+jx}\right) =-\sum_{j=1}^{k-1}\frac{j}{(1+jx)^2}.
\]
For $k\geq 2$ and $x>0$ every term in the last sum is negative, hence
\[
  F'(x)<0 \qquad \text{for all } x>0 \text{ and } k\geq 2.
\]
Thus $F$ is strictly decreasing on $(0,\infty)$ whenever $k\geq 2$.

The limits of $F$ at $0^+$ and $+\infty$ are easily computed. One has
\[
  \lim_{x\to 0^+}F(x)
  =\sum_{j=0}^{k-1}1-\frac{k}{2}
  =k-\frac{k}{2}
  =\frac{k}{2},
\]
and, since for $j\geq 1$ we have $1+jx\to\infty$ as $x\to\infty$,
\[
  \lim_{x\to\infty}F(x)
  =1-\frac{k}{2}.
\]
Combining these observations with the monotonicity of $F$ leads to the following conclusions about the equation $F(x)=0$ (equivalently $h(n)=0$).
For $k=2$ one has
\[
  F(0^+)=1, \qquad \lim_{x\to\infty}F(x)=0,
\]
and $F$ is strictly decreasing. Therefore $F(x)>0$ for all $x>0$, again implying that $h(n)$ has no zero in $(0,\infty)$.

For $k\geq 3$ one has
\[
  F(0^+)=\frac{k}{2}>0, \qquad \lim_{x\to\infty}F(x)=1-\frac{k}{2}<0,
\]
and $F$ is strictly decreasing on $(0,\infty)$. Hence, by the intermediate value theorem, there exists a unique $x_k>0$ such that $F(x_k)=0$. This implies that there is a unique $n_k>0$ with
\[
  \frac{1}{n_k}=x_k, \qquad h(n_k)=0.
\]
Therefore, for every integer $k\geq 3$, the function $g$ has exactly one critical point $n_k$ in $(0,\infty)$.

For $k\geq 3$ the expression \eqref{eq:g-product} can be rewritten as
\[
  g(n)=\frac{n(n+1)\cdots(n+k-1)}{n^{k/2}}.
\]
As $n\to 0^+$, the factors $n+1,\dots,n+k-1$ tend to positive constants, so the behaviour is dominated by
\[
  g(n)\sim C_k\,n^{1-k/2}
\]
for some constant $C_k>0$. Since $1-k/2<0$ for $k\geq 3$, one has $g(n)\to\infty$ as $n\to 0^+$. As $n\to\infty$, the product $n(n+1)\cdots(n+k-1)$ behaves like $n^k$, hence
\[
  g(n)\sim n^{k-k/2}=n^{k/2}\to\infty
\]
as $n\to\infty$. Together with the fact that $g'$ changes sign only once (because $h$ has exactly one zero), this shows that for $k\geq 3$ the function $g$ is strictly decreasing on $(0,n_k)$, strictly increasing on $(n_k,\infty)$, and attains a unique global minimum at $n=n_k$.

The preceding discussion also ensures that, for fixed $k$ and $n$, the maximum of $\rho(s,k)$, for $s=1,\ldots,n$, is attained either at $\rho(1,k)$ or at $\rho(n,k)$.

It is also useful to obtain an asymptotic approximation for the location of this minimum when $k$ is moderately large. The defining equation for $n_k$ is $h(n_k)=0$, that is
\[
  \sum_{j=0}^{k-1}\frac{1}{n_k+j}=\frac{k}{2n_k}.
\]
Approximating the sum by an integral gives
\[
  \sum_{j=0}^{k-1}\frac{1}{n+j}
  \approx \int_0^k \frac{dx}{n+x}
  =\ln\frac{n+k}{n}
  =\ln\Bigl(1+\frac{k}{n}\Bigr).
\]
Thus, for $n=n_k$, one expects approximately
\[
  \ln\Bigl(1+\frac{k}{n_k}\Bigr) \approx \frac{k}{2n_k}.
\]
Introducing the ratio
\[
  u=\frac{k}{n_k}
\]
this becomes the transcendental equation
\[
  \ln(1+u)=\frac{u}{2},
\]
which no longer involves $k$. This equation has a unique positive solution $u_0$, and a simple numerical computation shows that
\[
  u_0 \approx 2.51.
\]
Consequently,
\[
  n_k \approx \frac{k}{u_0} \approx 0.40\,k
\]
for large $k$. In other words, the location of the continuous minimizer grows asymptotically linearly in $k$ with slope slightly below $0.4$.

For concrete values of $k$, one can solve the equation $h(n)=0$ numerically. The following table lists, for $k=5,\dots,15$, an approximation of the unique minimizer $n_k$ in $(0,\infty)$ together with its integer part $\lfloor n_k\rfloor$.
\end{remark}
\begin{table}[H]
\centering
\begin{tabular}{ccc}
\toprule
$k$ & $n_k$ (approx.) & $\lfloor n_k \rfloor$ \\
\midrule
 5  & 1.2900 & 1 \\
 6  & 1.6958 & 1 \\
 7  & 2.0989 & 2 \\
 8  & 2.5006 & 2 \\
 9  & 2.9014 & 2 \\
10  & 3.3015 & 3 \\
11  & 3.7012 & 3 \\
12  & 4.1005 & 4 \\
13  & 4.4997 & 4 \\
14  & 4.8986 & 4 \\
15  & 5.2974 & 5 \\
\bottomrule
\end{tabular}
\caption{Approximate continuous minimizer $n_k$ of $g(n)$ and its integer part for various values of $k$.}
\end{table}
\begin{remark}
Let $x=s/t\ls1$. The maximizing configuration occurs as a root of the polynomial (see the proof below)
 \begin{align*}
g(x) &=  \binom{k}{1} \Gamma(n) \Gamma(k ) + \sum_{j=0}^{k-2} x^{j+1} \bigg[
\binom{k}{j} \Gamma(j + n-1) \Gamma(k - j + 1) (n-1) (j - k) \\
&\qquad + (j + 2)  \binom{k}{j+2} \Gamma(j +n+1) \Gamma(k - j - 1)
\bigg]  - \binom{k}{k-1} \Gamma(k+ n-2) 2 (n-1) x^k.
\end{align*}
Moreover, let $f(x)$ be the logarithm of $$\E\left(s\Gamma(n-1)+t\Gamma(1) \right)^{k}=\frac{\E\left(x\Gamma(n-1)+\Gamma(1) \right)^k}{((n-1)x^2+1)^{k/2}}.$$

There are inherent limitations in giving an exact description of the maximizer. In some cases it occurs at $x=1$ (see Figure~\ref{n=k=7}), whereas in other cases it may occur at one of the two additional roots of $g$ (see Figures~\ref{n=7 k=8} and \ref{n=7 k=10}).

\end{remark}

\begin{figure}[h]
\centering
\begin{minipage}{0.48\textwidth}
    \centering
    \begin{tikzpicture}
        \begin{axis}[
            width=6cm,
            xlabel={},
            ylabel={},
            domain=0:1,
            samples=400,
            grid=both,
        ]
            \addplot[thick]
                {ln( (3991680*x^7
                      +2328480*x^6
                      +1270080*x^5
                      +635040*x^4
                      +282240*x^3
                      +105840*x^2
                      +30240*x
                      +5040)
                    * (1/(6*x^2+1))^(3.5)
                  )};
        \end{axis}
    \end{tikzpicture}
    \caption{Plot of $f(x)$ for $n=k=7$.}
    \label{n=k=7}
\end{minipage}
\hfill
\begin{minipage}{0.48\textwidth}
    \centering
    \begin{tikzpicture}
        \begin{axis}[
            width=6cm,
            xlabel={},
            ylabel={},
            domain=0:1,
            samples=400,
            grid=both,
        ]
            \addplot[thick]
                {ln(
                    (362880*x^8
                    +2903040*x^7
                    +18627840*x^6
                    +10886400*x^5
                    +5443200*x^4
                    +2358720*x^3
                    +846720*x^2
                    +241920*x
                    +40320)
                    * (1/(6*x^2+1))^4
                )};
        \end{axis}
    \end{tikzpicture}
    \caption{Plot of $f(x)$ for $n=7$, $k=8$.}
    \label{n=7 k=8}
\end{minipage}
\end{figure}

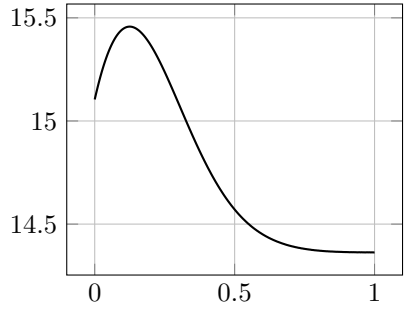
\begin{figure}[h]
    \centering
    \begin{tikzpicture}
        \begin{axis}[
            width=6cm,
            xlabel={},
            ylabel={},
            domain=0:1,
            samples=400,
            grid=both,
        ]
            \addplot[thick]
                {ln(
                    (10897286400*x^10
                    +7264857600*x^9
                    +4670265600*x^8
                    +2874009600*x^7
                    +1676505600*x^6
                    +914457600*x^5
                    +457228800*x^4
                    +203212800*x^3
                    +76204800*x^2
                    +21772800*x
                    +3628800)
                    * (1/(6*x^2 + 1))^5
                )};
        \end{axis}
    \end{tikzpicture}
    \caption{Plot of $f(x)$ for $n =7$ and $k=10$}
    \label{n=7 k=10}
\end{figure}

We also conjecture that this behavior holds for every real $q > 0$, as expressed in the following conjecture:

\begin{conjecture} \label{Conj1}
   Let $q > 0$ be a real number, and let $a_1, \ldots, a_n$ be non-negative real numbers satisfying $\sum_{i=1}^na_i^2= 1$.  If $X_1, \ldots, X_n$ are i.i.d.\ standard exponential random variables, then

    \[
    \mathbb{E}\big(a_1X_1+\cdots+a_nX_n \big)^q\gr \min\{ \rho(1,q),\ldots, \rho(n,q) \}.
    \]
\end{conjecture}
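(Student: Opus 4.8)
Following the pattern of the integer case (Theorem~\ref{pos k > n+2}), the plan is to establish the structural statement that, for every real $q>0$, every \emph{global} minimizer of
\[
F(a):=\E\Big(\sum_{i=1}^n a_iX_i\Big)^q
\]
on $K_n:=\{a\in\R^n:a_i\gr0,\ \sum a_i^2=1\}$ is, after a permutation of coordinates, of the rigid form $(t,\dots,t,0,\dots,0)$ with $m$ entries equal to $t=1/\sqrt m$. Granting this, the minimum of $F$ on $K_n$ equals $\min_{1\le m\le n}\rho(m,q)=\min\{\rho(1,q),\dots,\rho(n,q)\}$, which is the assertion; the subsequent identification of the optimal $m$ via the monotonicity of $g(n)=\Gamma(n+q)/(n^{q/2}\Gamma(n))$ proceeds exactly as in the Remark after Theorem~\ref{pos k > n+2}, but is not needed for the inequality. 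The cases $q\in\{1,2,3,4\}$ (and, via Theorem~\ref{Schur-gamma}, the bound $F\gr\rho(1,q)$ for integer $q\le4$) are already known, so the content lies in general real $q$; I would argue by induction on $n$, the case $n=1$ being trivial since then $F\equiv\Gamma(1+q)=\rho(1,q)$.

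\textbf{Reduction and the Lagrange system.} By compactness of $K_n$ and continuity of $F$ on $\R_+^n$ (finite for all $q>0$; when $q<1$ one uses $X_iS^{q-1}\le S^q/a_i$ to control the singularity), a global minimizer $a$ exists. If some $a_i=0$, restrict $F$ to the support of $a$ and invoke the inductive hypothesis in dimension $<n$; so assume $a_i>0$ for all $i$ and put $S:=\sum_j a_jX_j>0$ a.s. Then $F$ is $C^1$ near $a$, $\partial_iF=q\,\E[X_iS^{q-1}]$, and Lagrange multipliers together with Euler's homogeneity identity (as in the proof of Proposition~\ref{Lag integers unconditional}) give $\E[X_iS^{q-1}]=a_i\,\E[S^q]$ for all $i$. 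The key observation — the probabilistic incarnation of the CHS differentiation identity \eqref{CHS der} — is the elementary size-biasing identity for exponentials: the size-biased standard exponential has the law of a $\mathrm{Gamma}(2)$ variable, i.e.\ of $X_i+Y_0$ with $Y_0$ a fresh standard exponential independent of everything, so that (applied conditionally on $S-a_iX_i$) $\E[X_i\,\varphi(S)]=\E[\varphi(S+a_iY_0)]$ for every bounded measurable $\varphi$. Hence the Lagrange system becomes
\[
\E\big[(S+a_iY_0)^{q-1}\big]=a_i\,\E[S^q],\qquad i=1,\dots,n.
\]

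\textbf{Ruling out three distinct values.} Fix two coordinates with $a_i\ne a_j$. Subtracting the corresponding equations, then invoking the difference identity — the probabilistic incarnation of \eqref{CHS diff} — namely $\E[(T+aY_0)^p]-\E[(T+bY_0)^p]=p\,(a-b)\,\E[(T+aY_0+bY_0')^{p-1}]$ (immediate for $0<p<1$ from the Laplace-transform formula, and for general $p$ after the Haagerup-type bookkeeping that subtracts the low-order moment corrections, cf.\ \eqref{Fourier (2,4)} and Lemma~\ref{Fourier (4,6)}), one obtains
\[
(q-1)\,\E\big[(S+a_iY_0+a_jY_0')^{q-2}\big]=\E[S^q].
\]
If a third coordinate has value $a_k\notin\{a_i,a_j\}$, the analogous relation for the pair $(i,k)$ forces $\E[(S+a_iY_0+a_jY_0')^{q-2}]=\E[(S+a_iY_0+a_kY_0')^{q-2}]$, and one more application of the difference identity yields $(q-2)(a_j-a_k)\,\E[(S+a_iY_0+a_jY_0'+a_kY_0'')^{q-3}]=0$; since $a_j\ne a_k$ and (for non-integer $q$) $q\ne2$, this forces $\E[W^{q-3}]=0$ for the a.s.-positive random variable $W=S+a_iY_0+a_jY_0'+a_kY_0''$, which is absurd, as $W$ has enough independent Gamma factors to make $q-3$ integrable and hence $\E[W^{q-3}]>0$. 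Therefore an interior minimizer $a$ has at most two distinct (positive) values, $a=(s[\gamma_1],t[\gamma_2])$ with $\gamma_1s^2+\gamma_2t^2=1$ and $\gamma_1+\gamma_2=n$, and it only remains to treat this two-parameter configuration.

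\textbf{The two-value case, and the main obstacle.} If $s=t$ the vector is flat and we are done. Otherwise $a$ is an interior critical point of $F$ restricted to the one-parameter arc $\{(s,t):\gamma_1s^2+\gamma_2t^2=1,\ s,t\gr0\}$, along which $F$ equals $\E[(sG_1+tG_2)^q]$ with $G_1\sim\mathrm{Gamma}(\gamma_1)$, $G_2\sim\mathrm{Gamma}(\gamma_2)$ independent. The plan is to show this arc-function attains its minimum only at an endpoint ($s=0$ or $t=0$, giving $\rho(\gamma_2,q)$ or $\rho(\gamma_1,q)$) or at the symmetric point $s=t=1/\sqrt n$ (giving $\rho(n,q)$, since then $sG_1+tG_2=s(G_1+G_2)$ with $G_1+G_2\sim\mathrm{Gamma}(n)$), so that an interior non-symmetric critical point cannot be a global minimizer; in the integer case this is precisely what the positivity of the lower even CHS polynomials $h_{q-3},h_{q-4}$ delivers. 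I expect this last step — a sign analysis of the second derivative of $s\mapsto\E[(sG_1+tG_2)^q]$ along the arc, uniform in all real $q>0$ and all admissible shape pairs $(\gamma_1,\gamma_2)$ — to be the main obstacle, together with the careful justification of the non-integer difference identity and its moment corrections. For integer $q$ both steps collapse to polynomial identities (the CHS identities of Lemma~\ref{Hunter lemma} and positivity of $h_{q-3},h_{q-4}$); the genuinely new difficulty for $q\notin\Z$ is that the relevant quantities are transcendental, so one must argue instead by explicit but delicate sign estimates — and it is conceivable that for some $q$ the arc-function carries several critical points, in which case one would supplement the analysis with a global comparison in the spirit of the power-sum/Power-Mean argument used for even integer degrees through \eqref{CHP-PSP}.
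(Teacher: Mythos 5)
The statement you are proving is labelled as a \emph{Conjecture} in the paper, and it remains one: the authors offer no proof of it for general real $q>0$ and $n$, only the $n=2$ case (Proposition~\ref{q n=2 pos}, via log-concavity for $q\le4$ and a Descartes-rule argument for $q\ge4$). So there is no paper proof to compare against, and the right benchmark is whether your proposal actually closes the statement. It does not, and you say so yourself --- which is honest and appropriate, but it also means the proposal has a genuine gap rather than being a complete alternative argument.

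The first two stages of your plan are a sensible probabilistic recasting of the Lagrange analysis used for integer $k$ (Propositions~\ref{Lag integers unconditional} and \ref{Lagrange integers positive}). The size-biasing identity $\E[X_i\varphi(S)]=\E[\varphi(S+a_iY_0)]$ is correct, and the ``difference identity'' $\E[(T+aY_0)^p]-\E[(T+bY_0)^p]=p(a-b)\,\E[(T+aY_0+bY_0')^{p-1}]$ is the correct probabilistic analogue of \eqref{CHS diff}: your Laplace-transform verification for $0<p<1$ is right, and extending it to all $p>0$ by analytic continuation in $p$ (all moments involved are finite once $T+aY_0+bY_0'$ has a density vanishing fast enough at $0$, which it does here) is plausible but needs to be written out; this part is technical bookkeeping, not a missing idea. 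The consequent elimination of three distinct positive values is then formally parallel to the integer case, with the positivity of $\E[W^{q-3}]$ standing in for Hunter positivity of $h_{q-3},h_{q-4}$.

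The genuine gap is exactly where you flag it: the two-value case. Reducing to $a=(s[\gamma_1],t[\gamma_2])$ you must show that the arc minimum is only at $s=0$, $t=0$, or $s=t$. In the integer case the paper devotes most of the proof of Theorem~\ref{pos k > n+2} to this --- a careful root count of a polynomial $g(x)$ via Descartes' rule, a second-derivative test at $x=1$ whose sign changes at $k=n+2$, and a separate induction for $k\le n+1$. None of these tools transfer to non-integer $q$: $\E[(sG_1+tG_2)^q]$ is not a polynomial in $s/t$, Descartes' rule is unavailable, and the inductive step $k\to k+1$ has no analogue when $q$ moves continuously. You gesture at a ``sign analysis of the second derivative along the arc, uniform in $q$ and $(\gamma_1,\gamma_2)$'' and at possibly supplementing it with a power-sum comparison, but neither is carried out, and you yourself concede it is conceivable that the arc-function carries several critical points. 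Since this is precisely the step that would turn the conjecture into a theorem, the proposal as written is an outline that stops short of a proof at the same place the paper does.
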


\subsection{Characterization of Extrema}
We first introduce a Lemma that will be useful for the characterization of the global extrema.

\begin{lemma}\label{a,b,c lemma}
Let $x, y, z$ and $a, b, c$ be non-negative real numbers such that $x + y + z = a + b + c$, $x^2 + y^2 + z^2 = a^2 + b^2 + c^2$, and $xyz \ls abc$. Then, for any integer $k \gr 1$, we have
$$
x^k + y^k + z^k \le a^k + b^k + c^k.
$$
Respectively, if $xyz \gr abc$, then
$$
x^k + y^k + z^k \gr a^k + b^k + c^k.
$$

\end{lemma}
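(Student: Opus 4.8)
The plan is to reduce everything to a monotonicity statement in the last elementary symmetric function of a triple, and then to an integration argument. First I would record that, since $p_1=e_1$ and $p_2=e_1^2-2e_2$, the hypotheses $x+y+z=a+b+c$ and $x^2+y^2+z^2=a^2+b^2+c^2$ force the first two elementary symmetric functions of the two triples to agree; call the common values $\sigma_1,\sigma_2$ (both $\ge 0$, since $x,y,z\ge 0$). The remaining hypothesis is then exactly $xyz\le abc$ (resp.\ $\ge$), and the cases $k=1,2$ are trivial because $p_1,p_2$ depend only on $\sigma_1,\sigma_2$. So for $k\ge 3$ it suffices to prove the following: \emph{over the family of nonnegative real triples with prescribed $e_1=\sigma_1$ and $e_2=\sigma_2$, the power sum $p_k$ is a nondecreasing function of the product $e_3$.}

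To this end I would fix $\sigma_1,\sigma_2$ and parametrise such a triple as the (unordered) set of roots of $P_s(t)=t^3-\sigma_1 t^2+\sigma_2 t-s$, where $s=e_3$. The key computation is to differentiate a root in $s$: from $P_s\big(x_i(s)\big)\equiv 0$ one gets $x_i'(s)=1/P_s'(x_i(s))=1/\prod_{j\ne i}(x_i-x_j)$, hence
\[
\frac{d}{ds}\,p_k
= k\sum_{i=1}^{3}\frac{x_i^{\,k-1}}{\prod_{j\ne i}(x_i-x_j)}
= k\,h_{k-3}(x_1,x_2,x_3)\ge 0,
\]
where the middle equality is the Lagrange interpolation formula \eqref{lagr_pol} with $n=3$, and nonnegativity holds because $h_{k-3}$ has nonnegative coefficients and is evaluated at nonnegative numbers. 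This is the heart of the matter; it is an infinitesimal version of the claim, and it is precisely the step where the naive Newton's-identity induction fails (the recursion $p_k=\sigma_1 p_{k-1}-\sigma_2 p_{k-2}+e_3 p_{k-3}$ carries a term $-\sigma_2 p_{k-2}$ of the wrong sign).

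The remaining, more bookkeeping-heavy step — which I expect to be the main obstacle — is to upgrade this infinitesimal statement to a global one, i.e.\ to justify connecting the two triples by a path inside the admissible family. For this I would show that the set $I$ of admissible values $s=e_3$ (those for which $P_s$ has three nonnegative real roots, counted with multiplicity) is a single interval containing both $xyz$ and $abc$: since $\sigma_1,\sigma_2\ge 0$, Descartes' rule of signs applied to $P_s(-t)$ shows $P_s$ has no negative root, so admissibility is equivalent to $s\ge 0$ together with nonnegativity of the cubic's discriminant $\Delta(s)$; as $\Delta$ is a downward quadratic in $s$, the set $\{\Delta\ge 0\}$ is an interval, and $I$ is its intersection with $[0,\infty)$, which is nonempty because it contains $xyz$ and $abc$. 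On $I$ the unordered root triple depends continuously on $s$, so $\phi_k(s):=p_k(\text{roots of }P_s)$ is continuous; the values of $s\in I$ at which a root is repeated are exactly the (at most two) zeros of $\Delta$, so off a finite subset of $I$ the derivative computation above applies verbatim and gives $\phi_k'\ge 0$. A continuous function on an interval with nonnegative derivative outside a finite set is nondecreasing, hence $\phi_k$ is nondecreasing on $I$, and therefore $x^k+y^k+z^k=\phi_k(xyz)\le\phi_k(abc)=a^k+b^k+c^k$. The reverse inequality (the case $xyz\ge abc$) follows by exchanging the roles of the two triples.
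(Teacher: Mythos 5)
Your proof is correct and follows essentially the same route as the paper: you fix $e_1,e_2$, parametrise the triple by $s=e_3$, differentiate the roots in $s$ to get $\tfrac{d}{ds}\,p_k = k\,h_{k-3}(x_1,x_2,x_3)\ge 0$ via the Lagrange interpolation identity \eqref{lagr_pol}, and conclude by monotonicity. The one place you go further than the paper is the path-connectivity step: you verify explicitly (Descartes plus the fact that the cubic discriminant is a concave quadratic in $s$) that the admissible set of $s$-values is an interval and that the derivative computation applies off a finite set, whereas the paper dispatches the multiple-root cases with a brief continuity remark ``valid for $a\to b^+$ and $b\to c^+$''; your added rigor is welcome but the underlying argument is the same.
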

\begin{proof}[Proof of Lemma \ref{a,b,c lemma}] 
Let $a$, $b$ and $c$ be pairwise distinct numbers and set $a+b+c=u$, $ab+ac+bc=v$ and $abc=w$.

Since $a^k+b^k+c^k$ is a symmetric polynomial, it can be expressed as $$ a^k + b^k + c^k := f (u, v, w),$$ and we must show that $f$ increases as a function of $w$.
For this, it is enough to show that $\frac{\partial f}{\partial w}\geq0.$

Computing the partial derivatives we get \begin{align*}1&=\frac{\partial(a+b+c)}{\partial u}=\frac{\partial a}{\partial u}+\frac{\partial b}{\partial u}+\frac{\partial c}{\partial u},\\
0=\frac{\partial(ab+ac+bc)}{\partial u}&=\frac{\partial a}{\partial u}b+\frac{\partial b}{\partial u}a+\frac{\partial a}{\partial u}c+\frac{\partial c}{\partial u}a+\frac{\partial b}{\partial u}c+\frac{\partial c}{\partial u}b\\
&=(b+c)\frac{\partial a}{\partial u}+(a+c)\frac{\partial b}{\partial u}+(a+b)\frac{\partial c}{\partial u}.
\end{align*}
Moreover, $$0=\frac{\partial(abc)}{\partial u}=bc\frac{\partial a}{\partial u}+ac\frac{\partial b}{\partial u}+ab\frac{\partial c}{\partial u}.$$
The determinant of this system equals
$$\Delta=\left|\left(\begin{array}{ccc} 1&1&1\\b+c&a+c&a+b\\bc&ac&ab\end{array}\right)\right|=\sum_{cyc}(ab(a+c)-bc(a+c))=
(a-b)(a-c)(b-c),$$
which gives $$\frac{\partial a}{\partial w}=\frac{1}{(a-b)(a-c)}.$$
Similarly, $$\frac{\partial b}{\partial w}=\frac{1}{(b-a)(b-c)}$$ and
$$\frac{\partial c}{\partial w}=\frac{1}{(c-a)(c-b)}.$$

To this end, note that $$\frac{\partial f}{\partial w}=\sum_{cyc}\frac{\partial f}{\partial a}\frac{\partial a}{\partial w}=\sum_{cyc}\frac{ka^{k-1}}{(a-b)(a-c)}=k\sum_{cyc}\frac{a^{k-1}}{(a-b)(a-c)}=kh_{k-3}(a,b,c)\geq 0,$$ 
for all $k\geq 3$, where in the last equality we used \eqref{lagr_pol}.

Since the above proof is valid for $a\rightarrow b^+$ and for $b\rightarrow c^+$ and since $f$ is a continuous function, we obtain that $f$ increases for any non-negative $a$, $b$ and $c$. 
\end{proof}

\begin{proposition} \label{Lagrange integers positive}
Let $n\gr1$ and $k>2$ be a  non-negative integer. Then the extrema of $h_k$ in the  the space $\mathbb{S}_{+}^{n-1}$ are of the form $$\boldsymbol{x}=\left(\underbrace{a, \ldots, a}_{\gamma_1 }, \underbrace{b, \ldots, b}_{\gamma_2 }\right).$$ Here, $ a $ appears $ \gamma_1 $ times, $ b $ appears $\gamma_2 $ times, subject to the constraints  $\gamma_1 a^2 + \gamma_2 b^2 = 1$ and $\gamma_1 + \gamma_2 = n$.  

Moreover, if $a \gr b>0$, the global minimum is attained at a vector of the form
$$
\boldsymbol{x} = \left( \underbrace{a, \ldots, a}_{n-1 }, b \right),
$$
where $(n-1)a^2+b^2=1$, while the global maximum is attained at a vector of the form
$$
\boldsymbol{x} = \left( \underbrace{b, \ldots, b}_{n-1}, a \right),
$$
where $(n-1)b^2+a^2=1$ , or where the extrema are attained at a vector with some components equal to zero and the remainder equal; specifically,
\[
\left(\underbrace{\frac{1}{\sqrt{\gamma}},\ldots,\frac{1}{\sqrt{\gamma}}}_{\gamma},\underbrace{0,\ldots,0}_{n-\gamma} \right),
\]
for $\gamma=1,\ldots,n$.
\end{proposition}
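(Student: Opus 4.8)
The plan is to carry the Lagrange-multiplier scheme of Propositions~\ref{Lag integers unconditional} and~\ref{Hunter h_4} over to the non-negative sphere $\mathbb{S}_{+}^{n-1}=\{x\in\R^n:\|x\|_2=1,\ x_i\gr0\}$. By compactness $h_k$ attains its extrema there, and at a KKT point $\boldsymbol{x}$ the stationarity relations $\partial_i h_k(\boldsymbol{x})+2\lambda x_i=0$ hold at every coordinate with $x_i>0$ (each vanishing coordinate satisfying only a one-sided inequality). Multiplying these relations by $x_i$, summing over the positive coordinates — the vanishing ones contribute nothing — and invoking Euler's homogeneous function theorem gives $2\lambda=-k\,h_k(\boldsymbol{x})$, exactly as in Proposition~\ref{Lag integers unconditional}, hence $h_{k-1}(\boldsymbol{x},x_i)=k\,x_i\,h_k(\boldsymbol{x})$ for every positive coordinate by~\eqref{CHS der}. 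If two positive coordinates carry distinct values $x_i\neq x_j$, subtracting and applying~\eqref{CHS diff} yields $h_{k-2}(\boldsymbol{x},x_i,x_j)=k\,h_k(\boldsymbol{x})$; if a third positive coordinate carries a value distinct from both, one more subtraction yields $h_{k-3}(\boldsymbol{x},x_i,x_j,x_l)=0$. The point where the non-negativity is used is here: the vector $(\boldsymbol{x},x_i,x_j,x_l)$ has only non-negative entries, it is non-zero, and $k>2$ forces $k-3\gr0$, so $h_{k-3}$ of it is strictly positive (every monomial is non-negative and the largest coordinate raised to the power $k-3$ is a positive summand, or $h_0=1$) — a contradiction. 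Hence $\boldsymbol{x}$ carries at most two distinct positive values, so every extremum is, up to permutation, of the form $(a^{(\gamma_1)},b^{(\gamma_2)},0^{(\gamma_3)})$ with $a>b\gr0$ and $\gamma_1+\gamma_2+\gamma_3=n$.

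Next I would separate the boundary behaviour of the maximiser and the minimiser. For the maximiser no coordinate may vanish: if $x_l=0$, then $e_l$ is tangent to the sphere at $\boldsymbol{x}$ (as $\langle\boldsymbol{x},e_l\rangle=x_l=0$), the curve $t\mapsto(\boldsymbol{x}+te_l)/\|\boldsymbol{x}+te_l\|$ remains in $\mathbb{S}_{+}^{n-1}$ for small $t>0$, and its $h_k$-derivative at $t=0^+$ equals $\partial_l h_k(\boldsymbol{x})$ (the normalising factor $(1+t^2)^{-k/2}$ is stationary at $0$), which in turn equals $h_{k-1}(\boldsymbol{x},0)=h_{k-1}(\boldsymbol{x})>0$, contradicting maximality. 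Thus the maximiser is a genuine two-valued vector $(a^{(\gamma_1)},b^{(\gamma_2)})$ with $a\gr b>0$ and $\gamma_1+\gamma_2=n$ (the flat vector being the degenerate case $a=b$). At the minimiser the same computation gives only $\partial_l h_k(\boldsymbol{x})\gr0$, which is consistent with minimality, so the minimiser may have vanishing coordinates; applying the two-block conclusion to its support then leaves precisely the two options of the statement, a two-valued vector with both values positive or a flat-with-zeros vector $(\gamma^{-1/2},\dots,\gamma^{-1/2},0,\dots,0)$.

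It remains to pin down which block sizes and which values realise the global extrema. For a two-valued vector one has the closed form $k!\,h_k(a^{(\gamma_1)},b^{(\gamma_2)})=\E(aY+bZ)^k$ with $Y\sim\mathrm{Gamma}(\gamma_1)$ and $Z\sim\mathrm{Gamma}(\gamma_2)$ independent; after imposing $\gamma_1a^2+\gamma_2b^2=1$ this becomes the one-variable expression in the ratio $x=b/a\in(0,1]$ — a polynomial with Gamma-function coefficients — displayed in the Remark following Theorem~\ref{pos k > n+2}. I would analyse this expression for each fixed split and then compare across splits; for the cross-split comparison the natural tool is the power-sum identity~\eqref{CHP-PSP} together with Lemma~\ref{a,b,c lemma}, since two candidate configurations with the same $p_1$ and the same $p_2=1$ are ordered according to the single quantity $p_3$ (equivalently the product of the coordinates), after which all higher power sums — and therefore $h_k$ — inherit the inequality. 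For the minimum one invokes in addition Theorem~\ref{pos k > n+2}, whose lower bound $\min_s\rho(s,k)$ is realised by a flat-with-zeros vector and so identifies the optimal support size.

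I expect the real obstacle to be this last stage. Controlling the signs of the derivatives of the Gamma-function polynomial uniformly in $k$ and $n$ — so as to locate the optimal split (the minimiser at the ``$n-1$ equal coordinates'' configuration, the maximiser at the ``one large, $n-1$ small'' configuration or at one of the flat-with-zeros vectors) — and treating with care the degenerate boundary cases (the flat vector, the single spike, the intermediate flat-with-zeros vectors), is where most of the work lies. Unlike the lower-bound problems settled earlier in the paper, the maximiser need not be symmetric or otherwise rigid: it may sit at either of two non-trivial roots of the polynomial $g$ of the Remark, and it is this lack of rigidity, rather than the Lagrange step, that makes the proposition delicate.
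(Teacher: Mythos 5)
Your Lagrange/KKT argument for the structural form is correct and matches the paper's: the stationarity equations hold at the positive coordinates, the double subtraction produces $h_{k-3}(\boldsymbol{x},x_i,x_j,x_l)=0$ whenever three distinct positive values coexist, and the positivity of $h_{k-3}$ on non-negative non-zero vectors forbids this (with $h_0=1$ covering $k=3$). Your extra observation that the maximiser cannot have a vanishing coordinate — because the one-sided directional derivative along such a coordinate is $h_{k-1}(\boldsymbol{x})>0$ — is a clean, correct refinement that the paper does not bother to state.

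The gap is in the second stage (pinning down that the minimiser has at most one occurrence of the smaller value and the maximiser at most one of the larger), and you flag it yourself without resolving it. Two problems. First, your claim that "two candidate configurations with the same $p_1$ and $p_2=1$ are ordered according to $p_3$, after which all higher power sums — and therefore $h_k$ — inherit the inequality" is only true for triples: Lemma~\ref{a,b,c lemma} compares $(x,y,z)$ with $(a,b,c)$, and two $n$-vectors with matching $p_1,p_2$ are not in general ordered by $p_3$ alone. Second, the Gamma-function polynomial analysis that you anticipate as "where most of the work lies" belongs to Theorem~\ref{pos k > n+2}, not here; the proposition itself needs no one-variable calculus.

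What you are missing is a \emph{local} perturbation. Suppose the minimiser contains two occurrences of the smaller value, so it contains a triple $(a,b,b)$ with $a>b>0$. Keeping the remaining $n-3$ coordinates fixed, form the non-empty compact set
\[
S:=\bigl\{(x,y,z)\in\mathbb{R}_+^3:\ x+y+z=a+2b,\ \ x^2+y^2+z^2=a^2+2b^2\bigr\}
\]
and pick $(x,y,z)\in S$ minimising the product $xyz$. By the equal-variable theory (Corollary~1.8 of \cite{cirtoaje2007equal}) that minimum is not attained at $(a,b,b)$, so $xyz<ab^2$ strictly. Replacing the triple $(a,b,b)$ by $(x,y,z)$ preserves $p_1$ and $p_2$ of the full vector, hence keeps it in $\mathbb{S}_+^{n-1}$; by Lemma~\ref{a,b,c lemma} all power sums $p_m$ with $m\ge3$ strictly decrease; and since the expansion~\eqref{CHP-PSP} of $h_k$ in power sums has non-negative coefficients, $h_k$ strictly decreases — contradicting minimality. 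The maximiser is handled symmetrically, perturbing a triple $(a,a,b)$ toward the $S$-maximiser of $xyz$. This triple-swap combining Lemma~\ref{a,b,c lemma} with~\eqref{CHP-PSP} is the idea your proposal lacks, and it replaces the split-by-split calculus entirely.
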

\begin{proof}
 We follow verbatim the proof of Proposition~\ref{Lag integers unconditional} for the case of extrema lying in the interior; unlike before, we now have a boundary. Since all coefficients are positive, the argument terminates at relation~\eqref{Lag d-3}, where for distinct \(x_i, x_j, x_k\), we have \(h_{d-3}(\boldsymbol{x}, x_i, x_j, x_k) = 0\), which is clearly a contradiction. The extrema on the boundary will occur when at least one coordinate is zero, then our argument shows that the remaining non-zero coordinates must be equal completing the first part of the proof. 

 For the second part, without loss of generality, we may assume that $a > b>0$. (If $b=0$ we get exactly the third described form.) Suppose, for the sake of contradiction, that the vector attaining the global minimum is not of the desired form, and assume that there exist at least two occurrences of $b$.  A similar argument provides the maximum.

Define the set
\[
S := \left\{ (x,y,z) \in \mathbb{R}_{+}^3 : x + y + z = 2b + a \quad \text{and} \quad x^2 + y^2 + z^2 = 2b^2 + a^2 \right\}.
\]
It is clear that $S$ is non-empty and compact. Thus, we can choose $x, y, z$ in $S$ such that the product $xyz$ is minimized. 

Observe that $xyz < b^2 a$ and the inequality is strict since Corollary~1.8 in~\cite{cirtoaje2007equal} suggests that the minimum cannot be attained at $(a,b,b)$. 

Applying Lemma~\ref{a,b,c lemma} we obtain that for every non-negative integer $m>2$
\[
p_m\left( a,\ldots,b,b \right) > p_m\left( x,a,\ldots,a,y,z\right),
\]
where the right-hand side is obtained from the vector $(a,\dots,b,b)$ by replacing one occurrence of $a$ with $x$, and by replacing  two occurrences of $b$ with $y$ and $z$, respectively.

Therefore, by the representation~\eqref{CHP-PSP}, we arrive at $h_k\left( a,\ldots,a,b,b \right)>h_k\left( x,a,\ldots,a,y,z\right),$ 
which is clearly a contradiction.
\end{proof}

\subsection{Proofs}

\begin{proof}[Proof of Theorem \ref{Schur-gamma}]
We view the moment generating function of $\sum_{j=1}^n\sqrt{x_j}X_j$ in two ways, using on the one hand independence and on the other the Taylor expansion of the exponential function, namely
\begin{align*}
\E e^{t\sum_{j=1}^n\sqrt{x_j}X_j} &= \sum_{k=1}^\infty \frac{t^k}{k!}\E\left(\sum_{j=1}^n\sqrt{x_j}X_j\right)^k\\
                                  &= \prod_{j=1}^n (1-t\sqrt{x_j})^{-1},
\end{align*}
for all $t$ such that the above is well defined. If we let $F_t(x_1,\ldots, x_n) = \prod_{j=1}^n (1-t\sqrt{x_j})^{-1}$, differentiating we get
\[
\frac{\partial F}{\partial x_i} = F\cdot\frac{t}{2\sqrt{x_i}(1-t\sqrt{x_i})}
\]
for every $i=1,\ldots,n$, which leads to
\begin{align*}
\left(\frac{\partial}{\partial x_j}-\frac{\partial}{\partial x_i}\right)F &= F\cdot\frac{\sqrt{x_j}-\sqrt{x_i}}{2\sqrt{x_ix_j}}\frac{t^2(\sqrt{x_j}+\sqrt{x_i})-t}{(1-\sqrt{x_i}t)(1-\sqrt{x_j}t)}\\
&= \prod_{k\neq i,j}(1-t\sqrt{x_k})\cdot\frac{\sqrt{x_j}-\sqrt{x_i}}{2\sqrt{x_ix_j}}\frac{t^2(\sqrt{x_j}+\sqrt{x_i})-t}{(1-\sqrt{x_i}t)^2(1-\sqrt{x_j}t)^2},
\end{align*}
for every $i\neq j$. {Taylor expanding} around 0, we see that
\[
\frac{(a+b)t^2-t}{(1-at)^2(1-bt)^2}=-t-(a+b)t^2-(a^2+b^2)t^3-(a-b)^2(a+b)t^4+O(t^5).
\]
In particular, the coefficient of $t^k$ is non-positive for any $a,b>0$ when $k\ls 4$. The wanted statement follows then by the Schur-Ostrowski criterion.
\end{proof}

\begin{remark}
It is worth to note that the above argument works only for $k\ls 4$. For example the Taylor coefficient of $t^5$ in the expansion of $\frac{(a+b)t^2-t}{(1-at)^2(1-bt)^2}$ is $-a^4+2a^3b+3a^2b^2+2ab^3-b^4$, which does not preserve sign, e.g. is positive when $a=b$.
\end{remark}

\begin{proof}[Proof of Corollary \ref{Cor Gamma pos k}]
This is a direct application of the function to the majorization sequence \eqref{maj seq}.
\end{proof}

\begin{proof}[Proof of Theorem \ref{pos k > n+2}]
   The importance of Proposition~\ref{Lagrange integers positive} lies in the fact that it reduces the problem to one involving only two variables, namely the study of $\E(aG_1+bG_2)^k$ when $\gamma_1a^2+\gamma_2b^2=~1$. We can further reduce this to a one-variable problem by dividing by $(\gamma_1 a^2 + \gamma_2 b^2)^{k/2}$ and factoring out a common factor of $b$. Thus, we are ultimately led to study the following:
   
For any $a, b > 0$, let $x := a / b$, and define the function $f : (0, \infty) \to \mathbb{R}$ by
\begin{align*}
f(x) &= \log\left( \sum_{j=0}^k \binom{k}{j} \Gamma(j + \gamma_1) \Gamma(k - j + \gamma_2) x^j \right) 
- \log\left( \Gamma(\gamma_1)\Gamma(\gamma_2) \right)- \frac{k}{2} \log(\gamma_1 x^2 + \gamma_2).
\end{align*}
its derivative can be computed as:
\[
f'(x) = \frac{g(x)}{(\gamma_1 x^2 + \gamma_2) \left( \sum_{j=0}^k \binom{k}{j} \Gamma(j + \gamma_1) \Gamma(k - j + \gamma_2) x^j \right)},
\]
where
\begin{align*}
g(x) &= \gamma_2 \binom{k}{1} \Gamma(1 + \gamma_1) \Gamma(k - 1 + \gamma_2) + \sum_{j=0}^{k-2} x^{j+1} \bigg[
\binom{k}{j} \Gamma(j + \gamma_1) \Gamma(k - j + \gamma_2) \gamma_1 (j - k) \\
&\qquad + (j + 2) \gamma_2 \binom{k}{j+2} \Gamma(j + 2 + \gamma_1) \Gamma(k - j - 2 + \gamma_2)
\bigg]  - \binom{k}{k-1} \Gamma(k - 1 + \gamma_1) \Gamma(1 + \gamma_2) \gamma_1 x^k.
\end{align*}
Observe that $g(0) > 0$ and $\lim_{x \to +\infty} g(x) = -\infty$.

We aim to better understand the coefficient in front of $x^{j+1}$, say $z_j$, which can be simplified as
\[
z_j := -\frac{1}{k - j - 1}(k - j - 1 + \gamma_2)(k - j - 2 + \gamma_2)\gamma_1 + \frac{1}{j + 1} \gamma_2 (j + 1 + \gamma_1)(j + \gamma_1),
\]
by factoring out
\[
\frac{k! \, (j + \gamma_1 - 1)! \, (k - j - 3 + \gamma_2)!}{j! \, (k - j - 2)!}.
\]
This can be rewritten as
\begin{align*}
\begin{split}
 \frac{1}{(j + 1)(k - j + 1)} \Big[ 
& -j^3 (\gamma_1 + \gamma_2) + j^2 \left( k(2\gamma_1 + \gamma_2) - 4\gamma_1 - 2\gamma_2 \right) - j \big( k^2 \gamma_1 + k(-5\gamma_1 - \gamma_2) + \gamma_1^2 \gamma_2 \\
&\qquad + \gamma_1(\gamma_2^2 - 2\gamma_2 + 5) + \gamma_2 \big)- \gamma_1 \big( k^2 + k(-\gamma_1 \gamma_2 + \gamma_2 - 3)+ \gamma_1 \gamma_2 + \gamma_2^2 - 2\gamma_2 + 2 \big)
\Big].
\end{split}
\end{align*}
According to Descartes' Rule of Signs, the number of positive real roots of a polynomial is at most equal, or is less than it by an even number, to the number of sign changes in the sequence of its coefficients. Since the numerator of $z_j$ is a cubic polynomial, it can exhibit up to three sign changes, and therefore,  in the worst-case scenario $g$ can have  three sign changes in the sequence of its coefficients.

Combining all the above, we conclude that $g$ has at most three positive roots.

We now claim that $f'(1) = 0$, and thus the obvious root of the polynomial $g$ is $x = 1$. To verify this, we compute the derivatives of $\E[(xG_1+G_2)^k]$. 
By differentiting under the integral sign we obtain $$\frac{d}{dx}\E\left[(xG_1+G_2)^k\right]=k\E\left[(xG_1+G_2)^{k-1}G_1\right].$$ Moreover
\[
\mathbb{E}[(xG_1 + G_2)^{k-1}G_1] = \mathbb{E}\left(\sum_{j=0}^{k-1} \binom{k-1}{j} (xG_1)^{j+1} G_2^{k-1-j} \right) = \sum_{j=0}^{k-1} \binom{k-1}{j} x^{j+1}\mathbb{E}(G_1^{j+1})\, \mathbb{E}(G_2^{k-1-j}).
\]
The moment formula for Gamma distributions gives:
\[
\mathbb{E}(G_1^{j+1}) = \frac{\Gamma(\gamma_1 + j + 1)}{\Gamma(\gamma_1)} \quad \text{and} \quad \mathbb{E}(G_2^{k - 1 - j}) = \frac{\Gamma(\gamma_2 + k - 1 - j)}{\Gamma(\gamma_2)}.
\]
Therefore,
\[
\mathbb{E}[(xG_1 + G_2)^{k-1}G_1] = \sum_{j=0}^{k-1} \binom{k-1}{j} x^{j+1}\frac{\Gamma(\gamma_1 + j + 1)}{\Gamma(\gamma_1)} \cdot \frac{\Gamma(\gamma_2 + k - 1 - j)}{\Gamma(\gamma_2)}.
\]
This sum simplifies to
\[
\gamma_1 \cdot \mathbb{E}[(xG_3 + G_4)^{k-1}] 
\]
where $G_3 \sim \Gamma(\gamma_1 + 1)$, $G_4 \sim \Gamma(\gamma_2)$
where $G_3 \sim \Gamma(\gamma_1 + 1)$ and is independent of $G_1,G_2$. In particular, for $x = 1$, either expression above can be evaluated directly. Similarly, we obtain the second derivative to be 
\[
\frac{d^2}{d x^2} \mathbb{E}[(xG_1 + G_2)^k]  = k(k-1) \gamma_1(\gamma_1-1) \mathbb{E}[(xG_4 + G_2)^{k - 1}],
\]
where $G_4\sim\Gamma(\gamma_1+2)$ and is independed from the others.
We observe also that
\[
\mathbb{E}[(G_1 + G_2)^k]  = \frac{\Gamma(n + k)}{\Gamma(n)}.
\]
Thus, putting everything together:
\[
f'(1)  = \gamma_1  \frac{k}{n} - \frac{k\gamma_1}{n} = 0.
\]

A similar argument as above yields the second derivative at $x = 1$:
\[
f''(1) = \frac{k\gamma_1}{n}  \frac{\gamma_2(k - n - 2)}{n(n + 1)}.
\]

Therefore, by the second-order derivative test, we conclude the following:
\begin{itemize}
    \item If $k > n + 2$, then $f''(1) > 0$, so $f$ has a local minimum at $x = 1$.
    \item If $k < n + 2$, then $f''(1) < 0$, so $f$ has a local maximum at $x = 1$.
\end{itemize}
Since $g(0) > 0$ and $ g(+\infty) < 0$, we conclude that for $k > n + 2$, the other two roots $s<t$ must be positioned as $0<s<1<t$ and thus the function $f$ may attain its minimum at one of the points $x = 0$, $x = 1$, or $x = +\infty$. That is, the possible candidates for the minimum of $f$ are $f(0)$, $f(1)$, and $f(+\infty)$. The maximum will be attained at one of $f(s),f(t)$.

In the case where $k = n + 2$,  $f$ has a root of multiplicity two at $x = 1$. Therefore, the possible minima occur at $f(0)$ and $f(+\infty)$.

In the case where $k < n + 2$, there are two possible configurations. In the first scenario, the global maximum is attained at $x = 1$, and the potential minima are at $f(0)$ and $f(+\infty)$. In the second scenario, the polynomial $g$ admits two distinct roots $s$ and $t$ such that $0 < s < t < 1$ (or $1<s<t)$. In this case, the maximum of $f$ occurs at $x = s$ or $x=1$ and  the minimum could potentially occur at $f(t)$, in addition to $f(0)$ and $f(+\infty)$. Both scenarios are possible so this argument cannot work.

For $k \leq n+1$, we employ a different approach. Recall  (see Proposition \ref{Lagrange integers positive}) that the global minimum will be attained at a vector of the form
\[
\boldsymbol{x} = \left( \underbrace{a,\ldots,a}_{n-1}, b \right),
\] 
for $a\gr b>0$ or at a vector of the form
\[
\left(\underbrace{\frac{1}{\sqrt{\gamma}},\ldots,\frac{1}{\sqrt{\gamma}}}_{\gamma},\underbrace{0,\ldots,0}_{n-\gamma} \right)
\]
where $\gamma=1,\ldots,n$. We proceed by induction on $k$ to show that for every natural number $n$ and $x \geq 1$, the following inequality holds:
\[
\frac{\mathbb{E}\left( x G_1 + G_2 \right)^k}{\left((n-1)x^2 + 1\right)^{k/2}} \geq \frac{\Gamma(n-1+k)}{\Gamma(n-1) \, (n-1)^{k/2}},
\]
where $G_1,G_2$ are independent $\Gamma(n-1)$ and $\Gamma(1)$ random variables respectively. 
This would complete the argument. We observe that the cases $k=1,2$ hold, so we assume the statement holds for $k-1$. 

Consider the function 
\[
h(x) = \frac{\mathbb{E}\left( x G_1 + G_2 \right)^k}{\left((n-1)x^2  + 1\right)^{k/2}}.
\]
As before, we compute its derivative:
\[
h'(x) = \frac{ k (n-1) \, \mathbb{E}\left( x G_3 + G_2 \right)^{k-1} \left( (n-1)x^2  + 1 \right)^{k/2} - k (n-1) x \, \mathbb{E}\left( x G_1 + G_2 \right)^k \left( (n-1)x^2  + 1 \right)^{k/2 - 1} }{ \left( x^2 (n-1) + 1 \right)^k },
\]
where $G_1,G_2$ and $G_3$ are independent Gamma random variables with distributions $G_1 \sim \Gamma(n-1)$, $G_2 \sim \Gamma(1)$ and $G_3\sim \Gamma(n)$, respectively.

Therefore, at any root $y$ of $h'(x)$ we have
\begin{align*}
h(y) 
&= \frac{\mathbb{E}\left( y G_1 + G_2 \right)^k}{\left( (n-1)y^2 + 1 \right)^{k/2}} \\
&= \frac{\mathbb{E}\left( y G_3 + G_2 \right)^{k-1}}{ \left( n y^2 + 1 \right)^{(k-1)/2} } \cdot \frac{ \left( n y^2 + 1 \right)^{(k-1)/2} }{ y \left( (n-1)y^2 + 1 \right)^{k/2 - 1} }.
\end{align*}

Notice that the function 
\[
g(y) = \frac{ \left( n y^2 + 1 \right)^{(k-1)/2} }{ y \left( (n-1) y^2 + 1 \right)^{k/2 - 1} },
\]
has derivative 
\[
g'(y)=\frac{(y^2(k-n-1)-1)(ny^2+1)^{\frac{k-3}{2}}}{y^2((n-1)y^2+1)^{k/2}},
\]
and thus is decreasing for $k \leq n+1$. Therefore,
\[
g(y) \gr \frac{ n^{(k-1)/2} }{ (n-1)^{k/2-1} }.
\]
Combining this estimate with the inductive hypothesis completes the proof. 
\end{proof}

To support Conjecture \ref{Conj1}  concerning real exponents, we provide a proof for the two-dimensional case.
\begin{proposition}\label{q n=2 pos}
Let $q$ be a non-negative real number and $a,b\gr0$ such that $a^2+b^2=1$. Then  
\[
\mathbb{E}(aX+bY)^q\geq \min \left \{\mathbb{E}\bigg(\frac{X+Y}{\sqrt{2}}\bigg)^q ,\mathbb{E}\big(X_1^q\big) \right\}.
\]
\end{proposition}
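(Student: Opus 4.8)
The plan is to use the interpolation formula \eqref{interpol} to reduce the statement to a one–variable inequality and then determine the shape of the resulting function. By symmetry assume $a\ge b\ge 0$; since $aX+bY\ge 0$ almost surely, \eqref{interpol} gives $\mathbb{E}(aX+bY)^q=\Gamma(1+q)\,\tfrac{a^{q+1}-b^{q+1}}{a-b}$, read by continuity when $a=b$. Writing $t=b/a\in[0,1]$ and $a=(1+t^2)^{-1/2}$, the claim becomes
\[
R(t):=(1+t^2)^{-q/2}\,\frac{1-t^{q+1}}{1-t}\ \ge\ \min\{R(0),R(1)\}\qquad(t\in[0,1]),
\]
where $R(0)=1=\rho(1,q)/\Gamma(1+q)$ and $R(1)=(q+1)2^{-q/2}=\rho(2,q)/\Gamma(1+q)$. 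Thus it suffices to show that $R$ has no local minimum in the open interval $(0,1)$, i.e.\ that $R$ is first non-decreasing and then non-increasing on $[0,1]$.

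To this end I would study the logarithmic derivative $(\log R)'(t)=\tfrac{1}{1-t}-\tfrac{(q+1)t^q}{1-t^{q+1}}-\tfrac{qt}{1+t^2}$, which on $(0,1)$ has the same sign as
\[
H(t):=(1-t)(1-t^{q+1})(1+t^2)\,(\log R)'(t)=1-qt+(q+1)t^2-(q+1)t^q+qt^{q+1}-t^{q+2}=P(t)-t^{q+2}P(1/t),
\]
with $P(t)=(q+1)t^2-qt+1$. Three properties of $H$ drive the argument. (i) $H(0)=1>0$. (ii) Expanding at $t=1$, the constant, linear and quadratic coefficients all vanish, and $H(t)=\tfrac{q(q+1)(4-q)}{6}(1-t)^3+O((1-t)^4)$; hence $t=1$ is a zero of $H$ of multiplicity at least $3$. (iii) From $P(1/t)=t^{-2}(t^2-qt+q+1)$ one gets $H(1/t)=-t^{-(q+2)}H(t)$, so the positive zeros of $H$ are invariant under $t\mapsto 1/t$ with matching multiplicities, and $t=1$ is self-paired.

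Now apply the generalized Descartes (Laguerre) rule of signs to $H$, regarded as a generalized polynomial with exponents $0,1,2,q,q+1,q+2$: the coefficient sequence, arranged by increasing exponent, has at most five sign changes (exactly five when $q>2$), so $H$ has at most five positive zeros counted with multiplicity. Of these, $t=1$ uses up at least $3$ by (ii), and by (iii) any zeros in $(0,1)$ are matched by equally many in $(1,\infty)$; hence $H$ has at most one zero in $(0,1)$. Combined with (i), this forces $H$ — and therefore $R'$ — to be positive on some interval $(0,t_0)$ and negative on $(t_0,1)$ for some $t_0\in(0,1]$. Consequently $R$ increases and then decreases on $[0,1]$, so $\min_{[0,1]}R=\min\{R(0),R(1)\}$, which is exactly $\min\{\rho(1,q),\rho(2,q)\}/\Gamma(1+q)$. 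The exponents $q\in\{1,2\}$, where $H$ collapses to the genuine polynomials $(1-t)^3$ and $(1+t)(1-t)^3$ (both positive on $(0,1)$), are immediate.

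The part requiring the most care is (ii) together with the zero-count: checking the triple vanishing of $H$ at $t=1$ is a short but not automatic computation, and one must invoke the rule of signs with multiplicities and treat the coalescing-exponent values of $q$ carefully in order to squeeze the count of zeros of $H$ in $(0,1)$ down to one. Everything else is bookkeeping.
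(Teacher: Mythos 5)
Your proof is correct, and it takes a genuinely different route from the paper's for part of the range. The paper splits into two regimes: for $q\le 4$ it invokes Borell's log-concavity lemma (Lemma~\ref{lem:Borell}) applied to the log-concave density of $aX+bY$, together with Corollary~\ref{Cor Gamma pos k} to lower-bound the fourth moment, concluding $\mathbb{E}(aX+bY)^q\ge \Gamma(1+q)=\rho(1,q)$; only for $q\ge 4$ does it pass to the interpolation formula and the Descartes/Laguerre argument on the auxiliary function $H$. You instead run the interpolation-plus-Laguerre argument uniformly for all $q>0$, avoiding the log-concavity step and the auxiliary fourth-moment bound entirely. Two features of your version sharpen the bookkeeping: working with $t=b/a\in[0,1]$ gives a compact domain, and the explicit anti-symmetry $H(1/t)=-t^{-(q+2)}H(t)$ pairs the positive roots of $H$ across $t=1$, so that the triple zero at $t=1$ plus the Laguerre budget of five immediately forces at most one zero in $(0,1)$. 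The paper's $q>4$ argument uses the same $H$ and the same root-of-multiplicity-three observation, but does not make this $t\mapsto 1/t$ symmetry explicit, and its stated conclusion (that $x=1$ is a ``strictly increasing point of inflection'') is somewhat looser than what you derive.

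Two small points are worth writing out. First, for $q\in(0,2)$ the exponents $\{0,1,2,q,q+1,q+2\}$ reorder (and at $q=1,2$ some coalesce), so the Laguerre sign count drops from five to three; since the triple root at $t=1$ already uses up all three, $H$ has no zeros in $(0,1)$ in this range, which is even cleaner than the generic case. Second, at $q=4$ the coefficient $q(q+1)(q-4)/6$ of $(1-t)^3$ vanishes, so $t=1$ is a root of multiplicity at least four; one can check $H(t)=(1-t)^5(1+t)$, and in any case multiplicity $\ge 4$ with budget $5$ again forces no interior zeros. You flagged both of these as needing care, and they do go through, but since they are exactly the places where the generic ``three at $t=1$ plus at most one pair'' count could in principle misfire, I would include the short verifications rather than leave them implicit.
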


For the regime $q\ls4$ we will need the following log-concavity Lemma (see for example \cite{brazitikos2014geometry}).
\begin{lemma}\label{lem:Borell}
If $f:(0,+\infty)\to(0,+\infty)$ is log-concave then
\[
G(q) = \frac{1}{\Gamma(1+q)}\displaystyle\int_0^\infty t^qf(t)\,dt
\]
is also log-concave on $(-1,+\infty)$.
\end{lemma}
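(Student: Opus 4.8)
This is a classical Borell-type statement; see \cite{brazitikos2014geometry} for the surrounding framework. The natural strategy is to differentiate twice and recast log-concavity as a variance comparison. Writing $M(q)=\int_0^\infty t^qf(t)\,dt=\int_0^\infty e^{q\log t}f(t)\,dt$ (finite for $q>-1$ since a log-concave $f$ is bounded near $0$), one has $M''M-(M')^2=M^2\,\Var_{\nu_q}(\log t)$, where $\nu_q$ is the probability measure with density proportional to $t^qf(t)$. Applying the same computation to $\Gamma(1+q)=\int_0^\infty t^qe^{-t}\,dt$ gives $(\log\Gamma(1+q))''=\Var_{\gamma_{q+1}}(\log t)$, with $\gamma_{q+1}$ the $\mathrm{Gamma}(q+1,1)$ law, so that
\[
(\log G)''(q)=\Var_{\nu_q}(\log t)-\Var_{\gamma_{q+1}}(\log t).
\]
Thus Lemma~\ref{lem:Borell} is equivalent to the inequality $\Var_{\nu_q}(\log t)\le \Var_{\gamma_{q+1}}(\log t)$ for all $q>-1$ and all log-concave $f$; equivalently, among all log-concave densities the standard exponential (for which $\nu_q=\gamma_{q+1}$) maximizes the logarithmic variance of the $t^q$-tilted measure. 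The equality case $f(t)=ce^{-t}$ is built in, so any proof must be tight there.

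\textbf{Proving the key inequality.} Here the log-concavity of $f$ must be used essentially: Cauchy--Schwarz gives only the reverse bound for $M$, and one cannot simply peel off the $\Gamma$ factor, since $\Gamma$ is log-\emph{convex}, which fights against a naive Prékopa--Leindler splitting. The clean route is to first replace $f$ by its survival function $\bar F(s):=\int_s^\infty f$, which is log-concave \emph{and decreasing}; log-concavity of $\bar F$ follows from Prékopa's marginal theorem applied to the jointly log-concave function $(s,t)\mapsto \1_{\{t\ge s\}}f(t)$. An integration by parts yields $G(q)=\frac{1}{\Gamma(q)}\int_0^\infty s^{q-1}\bar F(s)\,ds$ for $q>0$, and the substitution $s=e^a$ turns this into $G(q)=\frac{1}{\Gamma(q)}\int_{\R}e^{qa}\psi(a)\,da$ with $\psi(a):=\bar F(e^a)$. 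Crucially, since $\bar F$ is decreasing and log-concave, $-\log\bar F$ is increasing and convex, hence $-\log\psi(a)=(-\log\bar F)(e^a)$ is convex on all of $\R$; that is, $\psi$ is genuinely log-concave on $\R$. Writing likewise $\Gamma(q)=\int_\R e^{qb}e^{-e^b}\,db$, the lemma reduces to the statement that, for every log-concave $\psi$ on $\R$, the ratio $q\mapsto\bigl(\int_\R e^{qa}\psi(a)\,da\bigr)\big/\bigl(\int_\R e^{qb}e^{-e^b}\,db\bigr)$ of two-sided Laplace transforms is log-concave. This last reduction is then settled by the Prékopa--Leindler inequality, applied in the spirit of Borell's original argument, with the $\Gamma$-normalisation now sitting on the favourable side of the estimate. (The range $q\in(-1,0]$ is handled by a further integration by parts, or simply by continuity of $(\log G)''$ together with the variance identity above, which is valid throughout $(-1,\infty)$.)

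\textbf{Main obstacle.} The genuine difficulty is exactly the extremal variance inequality $\Var_{\nu_q}(\log t)\le\Var_{\gamma_{q+1}}(\log t)$: all the rest is bookkeeping (the second-derivative identity, the passage to the survival function $\bar F$, and the change of variables that makes the log-coordinate density honestly log-concave on $\R$). The content lies entirely in the fact that replacing the exponential weight $e^{-t}$ by an arbitrary log-concave weight $f$ cannot increase the logarithmic variance, and this is where Prékopa--Leindler (equivalently, Borell's lemma) does the real work. Once Lemma~\ref{lem:Borell} is in hand, it is applied with $f$ the density of $\sum_i a_iX_i$, which is log-concave as a convolution of exponential densities, giving the log-concavity of $q\mapsto \E\bigl|\sum_i a_iX_i\bigr|^q/\Gamma(1+q)$ on $(-1,\infty)$ used in the $q\le 4$ regime.
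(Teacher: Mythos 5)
The paper does not prove this lemma; it simply cites \cite{brazitikos2014geometry}, so there is no in-house argument to compare against — the only question is whether your proposed proof stands on its own, and it does not.

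The setup is fine: the variance identity $(\log G)''(q)=\Var_{\nu_q}(\log t)-\Var_{\gamma_{q+1}}(\log t)$, the passage to $\bar F(s)=\int_s^\infty f$, and the verification that $\psi(a)=\bar F(e^a)$ is log-concave on $\R$ (because $-\log\bar F$ is convex \emph{and increasing}) are all correct and a genuine idea. The fatal step is the sentence ``the lemma reduces to the statement that, for every log-concave $\psi$ on $\R$, the ratio $q\mapsto\bigl(\int_\R e^{qa}\psi(a)\,da\bigr)\big/\bigl(\int_\R e^{qb}e^{-e^b}\,db\bigr)$ is log-concave.'' That general statement is false. Take $\psi(a)=e^{-a^2/2}$, which is log-concave on $\R$. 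Then $\int_\R e^{qa}\psi(a)\,da=\sqrt{2\pi}\,e^{q^2/2}$, whose log has second derivative identically $1$, while $\int_\R e^{qb}e^{-e^b}\,db=\Gamma(q)$ for $q>0$, whose log has second derivative $\psi_1(q)$ (the trigamma function). Since $\psi_1(q)<1$ for $q\gtrsim 1.37$, the ratio is strictly log-\emph{convex} there. So log-concavity of $\psi$ alone cannot possibly deliver the conclusion.

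The reason the counterexample does not contradict the lemma is that $e^{-a^2/2}$ is \emph{not} of the form $\bar F(e^a)$ for any log-concave density $f$ on $(0,\infty)$: one would need $\bar F(s)=e^{-(\log s)^2/2}$, whose derivative changes sign, so it is not a survival function. This pinpoints exactly what your reduction throws away: the functions $\psi$ that actually arise are not merely log-concave but also decreasing, bounded, and tend to a positive constant at $-\infty$ (equivalently they are pushforwards of log-concave survival functions under $\log$), and it is this extra structure, not bare log-concavity, that carries the inequality. Your final clause — that the reduced claim ``is then settled by Prékopa–Leindler, applied in the spirit of Borell's original argument'' — is not an argument but a pointer, and in light of the counterexample it cannot be made to work as stated. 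A correct proof must either retain and exploit the monotone/survival-function structure, or take a different route altogether (Borell's original argument proceeds by reducing to indicators of intervals via the level-set representation of a log-concave function, not by the Laplace-transform comparison you propose).
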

\begin{proof}[Proof of Proposition \ref{q n=2 pos}]
We will first work on the regime $q \leq 4$. We apply Lemma \ref{lem:Borell} for the density of $aX+bY$, given by
\[
 d(x)=\frac{1}{a - b} \left( e^{-x/a} - e^{-x/b} \right) \mathbf{1}_{(0, +\infty)}.
\]  

By log-concavity,
\[
\frac{1}{\E X_1^q}\E\left(aX+bY\right)^q = G(q)\gr G(0)^{1-\frac{q}{4}}G(4)^{\frac{q}{4}} = \left(\frac{\E\left(aX+bY\right)^4}{4!}\right)^\frac{q}{4}\gr 1,
\]
since, by Corollary \ref{Cor Gamma pos k}, $\E\left(aX+bY\right)^4\gr \E X_1^4=4!\,$.

For $q\gr4$, the desired quantity becomes  
\[
\frac{a^{q+1} - b^{q+1}}{(a - b)(a^2+b^2)^{q/2}} 
\]  
for all $a, b \gr 0$. Without loss of generality let $b\neq0$, $x:=a/b$ and define $f:(0,\infty)\to \mathbb{R}$ by
\[
f(x)=\log\left( \frac{x^{q+1}-1}{x-1}\right)-\frac{q}{2}\log(x^2+1).
\]
 To this end, we examine the  monotonicity of $f$. 
\[
f'(x)= \frac{qx^{q+1}-(q+1)x^q+1}{(x-1)(x^{q+1}-1)}-\frac{qx}{x^2+1}
\]
\[
=\frac{- x^{q + 2} +q x^{q + 1}  -  x^q(q+1)  +  x^2(q+1) - q x  + 1}{(x-1)(x^2+1)(x^{q+1}-1)}
\]
 Note that the  numerator, say $g$, has  five sign changes in its coefficients, therefore by the extension of Descartes' rule of signs \cite{curtiss1918recent} it has at most five positive roots, and thus the same will hold for $f'$.
 It is easy to check that $g(x)$ has a root of multiplicity three at $x=1$.

We observe that $g'''(1)=q(q+1)(q-4)>0$ for $q>4$ and thus from the higher order derivative test, $x=1$ is a saddle point and a strictly increasing point of inflection. 
\end{proof}

\section{The Centred Case: Proofs}

\subsection{Characterization of Extrema}
For the \textbf{centred} case, where $\sum_{i=1}^n a_i = 0$ and $\sum_{i=1}^n a_i^2 = 1$, we have the following theorem.

When $n$ is even, the lower bound for every even degree coincides with the one stated in Theorem~\ref{Hunter conj}, but in this case we are able to determine the exact upper bound.

\begin{theorem}\label{h_{2r} lower geom}
Let $n$ be an even integer and $k \ge 1$ be an integer, and let  $a_1, \ldots, a_n$ be real numbers such that $\sum_{i=1}^n a_i^2 = 1$ and $\sum_{i=1}^na_i=0$. Then $$h_{2k}(a_1, \ldots, a_n) \geq \frac{\left(\frac{n}{2} + k - 1\right)!}{k! \cdot \left(\frac{n}{2} - 1\right)! \cdot n^k},$$ and equality is attained at the vector $\tilde{a}$. 

 The maximum, for both odd and even non-negative integers $n$, is attained at the vector where all of $a_i$ except of one are equal, that is 
 \[
 \left(-\underbrace{\frac{1}{\sqrt{n(n-1)}},\ldots,-\frac{1}{\sqrt{n(n-1)}}}_{n-1},\sqrt{\frac{n-1}{n}} \right).
 \]

\end{theorem}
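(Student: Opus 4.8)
The plan is to treat the two inequalities separately. The lower bound is essentially a corollary of Theorem~\ref{Hunter conj}: for $k=1$ there is nothing to prove, since on the feasible set $h_2(a)=\tfrac12\sum a_i^2+\tfrac12\bigl(\sum a_i\bigr)^2=\tfrac12$ is constant, while for $k\ge2$ the half-plus/half-minus vector $\tilde a$ already satisfies $\sum\tilde a_i=0$ and therefore lies in the feasible set; hence the minimum of $h_{2k}$ over the zero-sum sphere is at least its minimum over all of $\mathbb S^{n-1}$, which by Theorem~\ref{Hunter conj} equals $\frac{(n/2+k-1)!}{k!\,(n/2-1)!\,n^k}=h_{2k}(\tilde a)$ and is attained at $\tilde a$. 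This yields the stated lower bound together with its equality case.

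For the upper bound I would first pin down the structure of the extrema by rerunning the Lagrange-multiplier argument of Proposition~\ref{Lag integers unconditional}, now on the compact $(n-2)$-sphere cut out by $\sum x_i=0$ and $\sum x_i^2=1$. Using $\partial_i h_{2k}(x)=h_{2k-1}(x,x_i)$ (Lemma~\ref{Hunter lemma}), the stationarity condition reads $h_{2k-1}(x,x_i)=\lambda+2\mu x_i$; subtracting this at two distinct coordinate values and applying \eqref{CHS diff} gives $h_{2k-2}(x,x_i,x_j)=2\mu$, while Euler's homogeneity identity together with $\sum x_i=0$ forces $2\mu=2k\,h_{2k}(x)$. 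Iterating the difference trick, a third distinct value yields $h_{2k-3}(x,x_i,x_j,x_l)=0$ and a fourth yields $h_{2k-4}(\dots)=0$, which for $k\ge2$ contradicts positivity of even-degree CHS polynomials (Theorem~\ref{thm:hunter-positivity}); for $k=1$ the claim is trivial by the computation above. Hence every extremum of $h_{2k}$ on the zero-sum sphere has at most three distinct coordinate values.

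It then remains to maximise $h_{2k}$ over zero-sum unit vectors of the form $(a^{(\gamma_1)},b^{(\gamma_2)},c^{(\gamma_3)})$. For the two-value configurations the constraints force $v_p=(\alpha_p^{(p)},(-\beta_p)^{(n-p)})$ with $\alpha_p=\sqrt{(n-p)/(pn)}$ and $\beta_p=\sqrt{p/((n-p)n)}$, for which the generating function $\sum_m h_m(v_p)t^m=(1-\alpha_p t)^{-p}(1+\beta_p t)^{-(n-p)}$ yields $h_{2k}(v_p)=\frac{n^{-k}}{(2k)!}\sum_{i=0}^{2k}\binom{2k}{i}(-1)^i\bigl(\tfrac{n-p}{p}\bigr)^{i-k}(p)_i\,(n-p)_{2k-i}$, where $(y)_m:=\Gamma(y+m)/\Gamma(y)$ is the rising factorial. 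Since $h_{2k}(-v)=h_{2k}(v)$ one has $h_{2k}(v_p)=h_{2k}(v_{n-p})$, so it suffices to take $1\le p\le n/2$ and prove that $h_{2k}(v_p)$ is largest at $p=1$; I would do this by inducting on $k$, in the spirit of the proof of Theorem~\ref{pos k > n+2}.

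The genuine three-value configurations then remain, and excluding them as global maximisers is the main obstacle. One would use \eqref{CHP-PSP} to reduce the comparison of $h_{2k}$ to a comparison of power sums $p_m$, and then run a case analysis paralleling that behind Proposition~\ref{Hunter h_4}. The difficulty is that Lemma~\ref{a,b,c lemma} equalises only triples of non-negative reals, whereas the coordinates here have mixed signs; one must therefore argue separately that the positive block may be merged (applying the lemma there, possibly together with a zero coordinate), that the negative block may be merged (after negating and tracking the sign changes of the odd power sums in \eqref{CHP-PSP}), and that a maximiser cannot carry two or more positive coordinates together with two distinct negative values at once --- this last exclusion being where the extra stationarity relation $h_{2k-3}(x,a,b,c)=0$ at a three-value critical point has to be invoked, exactly as in the case analysis of Proposition~\ref{Hunter h_4}.
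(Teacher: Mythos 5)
The lower‑bound half of your argument is correct and coincides with the paper's: for $k\ge2$ one simply observes that $\tilde a$ satisfies $\sum\tilde a_i=0$, so the zero‑sum minimum is at least the unconstrained minimum from Theorem~\ref{Hunter conj} and is attained at $\tilde a$, while for $k=1$ the constraints make $h_2$ constant. That part can stand as is.

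The upper‑bound half, however, has a genuine gap, and I think you are missing the observation that makes the paper's proof work. You propose to compare $h_{2k}$ directly across configurations: first over the two‑value family $v_p$ by an induction on $k$ ``in the spirit of Theorem~\ref{pos k > n+2}'', and then to exclude genuine three‑value extrema by a merging argument modelled on Lemma~\ref{a,b,c lemma}. Neither step is carried out, and both are problematic. The induction for the $v_p$'s is only asserted, and the proof template you cite (Theorem~\ref{pos k > n+2}) lives in the non‑negative regime; there is no reason it adapts verbatim to the mixed‑sign zero‑sum case. For the three‑value case you yourself identify the obstruction --- Lemma~\ref{a,b,c lemma} requires non‑negative triples, so the ``merge the positive block, merge the negative block'' plan is not licensed by it --- and the extra remarks about invoking $h_{2k-3}(x,a,b,c)=0$ do not resolve this; they are exactly where a precise argument is needed and none is given.

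The paper avoids all of this by never comparing values of $h_{2k}$ across configurations. Instead, after reducing to at‑most‑three‑value vectors (Proposition~\ref{Lag integers geometric}, as you also do), it proves in Proposition~\ref{p_k max} --- by a \emph{self‑contained} Lagrange/root‑counting analysis for the single scalar function $p_m$, not for $h_{2k}$ --- that for \emph{every} degree $m\ge3$ the power sum $p_m$ attains its maximum on the constrained set at the \emph{same} target vector $(-\tfrac{1}{\sqrt{n(n-1)}},\ldots,-\tfrac{1}{\sqrt{n(n-1)}},\sqrt{\tfrac{n-1}{n}})$. For odd $m$ a convexity count shows two of the three coordinate values must coincide and an explicit one‑variable optimisation finishes; for even $m$ a Descartes‑type argument bounds the extremal coordinate. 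Then, because the expansion \eqref{CHP-PSP} of $h_{2k}$ in power sums has only non‑negative coefficients, one has $h_{2k}(x)\le\sum\prod\tfrac{|p_i(x)|^{m_i}}{m_i!\,i^{m_i}}$; on the constraint set $p_1=0$, $p_2=1$, $p_{2j}\ge0$, and $|p_{2j-1}|$ is maximised at the maximiser of $p_{2j-1}$ by oddness, so every term is simultaneously maximised at the target vector and the bound follows at once. In short: the crucial idea is to prove a simultaneous extremality statement for the power sums rather than a direct comparison of $h_{2k}$, which is exactly the step your outline does not contain.
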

We also provide a proof of the lower bound for $h_4$ for both odd and even integers $n$.
\begin{proposition}\label{h_4 geo}
Let $a_1, \ldots, a_n \in \mathbb{R}$ satisfy $\sum_{i=1}^n a_i = 0$ and $\sum_{i=1}^n a_i^2 = 1$. Then the quantity $h_4(a_1, \ldots, a_n)$ attains the minimum at
$$\left(\underbrace{\frac{1}{\sqrt{n}},\ldots,\frac{1}{\sqrt{n}}}_{\frac{n}{2}},\underbrace{-\frac{1}{\sqrt{n}},\ldots,-\frac{1}{\sqrt{n}}}_{\frac{n}{2}}\right)$$ if $n$ is even, or
$$\left(\underbrace{\sqrt{\frac{n+1}{n(n-1)}},\ldots,\sqrt{\frac{n+1}{n(n-1)}}}_{\frac{n-1}{2}},\underbrace{-\sqrt{\frac{n-1}{n(n+1)}},\ldots,-\sqrt{\frac{n-1}{n(n+1)}}}_{\frac{n+1}{2}} \right),$$
if $n$ is odd.
\end{proposition}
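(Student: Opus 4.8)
\emph{Plan.} The plan is to reduce the whole statement to a single power sum. Specialising the power-sum expansion~\eqref{CHP-PSP} to $k=4$ gives
\[
h_4 = \frac{p_1^4}{24} + \frac{p_1^2 p_2}{4} + \frac{p_2^2}{8} + \frac{p_1 p_3}{3} + \frac{p_4}{4},
\]
so on the centred sphere $\{\,p_1=\sum_i a_i=0,\ p_2=\sum_i a_i^2=1\,\}$ it collapses to $h_4(a)=\frac18+\frac14\,p_4(a)$ with $p_4(a)=\sum_i a_i^4$. Hence everything comes down to minimising $\sum_i a_i^4$ subject to $\sum_i a_i=0$, $\sum_i a_i^2=1$. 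When $n$ is even, the power-mean inequality gives $\sum_i a_i^4\ge\frac1n\big(\sum_i a_i^2\big)^2=\frac1n$, with equality exactly when all $a_i^2$ coincide, i.e.\ $|a_i|=1/\sqrt n$; the zero-sum condition then forces $n/2$ positive and $n/2$ negative entries, so the minimisers are precisely the permutations of $\tilde a$, in agreement with Proposition~\ref{Hunter h_4}. From now on $n$ is odd.

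For odd $n$ I would argue via Lagrange multipliers. The feasible set is a smooth compact manifold, and the constraint gradients $(1,\dots,1)$ and $2(a_1,\dots,a_n)$ are linearly independent at every feasible point (as $a\neq0$ is not a scalar multiple of $(1,\dots,1)$), so a minimiser satisfies $4a_i^3=2\lambda a_i+\mu$ for all $i$ and fixed $\lambda,\mu$. Thus every coordinate is a root of one fixed cubic, whence $a$ takes at most three distinct values; exactly one value is impossible, since $na=0$ would force $a=0$. Multiplying the stationarity relation by $a_i$ and summing gives $\sum_i a_i^4=\lambda/2$, while subtracting the relations for two distinct values $a\neq b$ gives $\lambda=2(a^2+ab+b^2)$.

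It then remains to treat the two cases. If $a$ has exactly two distinct values, $a$ with multiplicity $\gamma_1$ and $b$ with multiplicity $\gamma_2=n-\gamma_1$, the two constraints force $a^2=\gamma_2/(n\gamma_1)$, $b^2=\gamma_1/(n\gamma_2)$, $ab=-1/n$, whence $\sum_i a_i^4=\gamma_1a^4+\gamma_2b^4=\frac1n\big(\frac{n^2}{\gamma_1\gamma_2}-3\big)$. Since $\gamma_1\gamma_2$ is maximal over integer splits at $\{\gamma_1,\gamma_2\}=\{\frac{n-1}{2},\frac{n+1}{2}\}$, the minimum over two-value critical points equals $\frac{n^2+3}{n(n^2-1)}$, attained exactly at the vector in the statement. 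If $a$ has three distinct values $a,b,c$, the cubic $4t^3-2\lambda t-\mu$ has no $t^2$-term, so $a+b+c=0$ and $\lambda=a^2+b^2+(a+b)^2=a^2+b^2+c^2$, giving $\sum_i a_i^4=\frac12(a^2+b^2+c^2)$. If the three multiplicities are equal the normalisation forces $a^2+b^2+c^2=3/n\ge\frac{2(n^2+3)}{n(n^2-1)}$ (valid since $n\ge3$); otherwise, combining $a+b+c=0$ with $\gamma_1a+\gamma_2b+\gamma_3c=0$ pins $(a,b,c)$ proportional to $v:=(\gamma_3-\gamma_2,\ \gamma_1-\gamma_3,\ \gamma_2-\gamma_1)\neq0$, and then $a^2+b^2+c^2=\frac{|v|^2}{\sum_i\gamma_i v_i^2}$.

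Writing $\gamma_i=1+d_i$ with $d_i\ge0$ and $D=\sum_i d_i=n-3$, one gets $|v|^2=S$ and $\sum_i\gamma_i v_i^2=S+T$, where $S=\sum_{\mathrm{cyc}}(d_i-d_j)^2$ and $T=\sum_i d_i(d_j-d_k)^2$, so the bound $a^2+b^2+c^2\ge\frac{2(n^2+3)}{n(n^2-1)}$ becomes, after clearing denominators, $D(D^2+7D+14)\,S\ge 2(D^2+6D+12)\,T$. The decisive observation is that $T\le\frac{D}{2}\,S$: expanded in the elementary symmetric functions of $d_1,d_2,d_3$ this reads $e_1^3-4e_1e_2+9e_3\ge0$, equivalently $p_1^3-5p_1p_2+6p_3\ge0$, which is precisely Schur's inequality of degree $1$ for the non-negative numbers $d_i$. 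Since $D^2+7D+14>D^2+6D+12$, this closes the three-value case, and for odd $n$ the minimum of $\sum_i a_i^4$ is $\frac{n^2+3}{n(n^2-1)}$, attained at the stated vector, so $h_4$ attains its minimum $\frac18+\frac{n^2+3}{4n(n^2-1)}$ there. I expect the three-value case to be the main obstacle: the $v$-parametrisation, carrying the non-homogeneous algebra down to $D(D^2+7D+14)S\ge 2(D^2+6D+12)T$, and recognising $T\le\frac{D}{2}S$ as Schur's inequality; the reduction $h_4=\frac18+\frac14 p_4$, the even-$n$ argument, and the one-variable optimisation over the split are routine.
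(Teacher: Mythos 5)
Your proposal is correct and follows essentially the same route as the paper: reduce $h_4$ to $\tfrac18+\tfrac14 p_4$ via the power-sum identity, apply Lagrange multipliers to show any critical point takes at most three distinct values $a,b,c$ with $a+b+c=0$, and then compare two- and three-value critical configurations, where the decisive algebraic inequality is precisely what the paper establishes as $\sum_{\mathrm{cyc}}(s-1)(s-t)(s-w)\ge 0$. The one small value you add is recognising that inequality (in your $T\le\tfrac{D}{2}S$ form) as Schur's inequality of degree one for $(\gamma_1-1,\gamma_2-1,\gamma_3-1)$, and spelling out the final comparison between the three-value and two-value critical values, which the paper leaves to the reader.
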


In the case $n = 3$, we are able to obtain sharp upper and lower bounds for nearly all moments. 
\begin{proposition}\label{geom n=3}
Let 
\[
x_1 = \left(\frac{1}{\sqrt{6}}, \frac{1}{\sqrt{6}}, -\frac{2}{\sqrt{6}}\right), \quad
x_2 = \left(\frac{1}{\sqrt{2}}, -\frac{1}{\sqrt{2}}, 0\right),
\]
and define 
\[
f(a_1, a_2, a_3) := \mathbb{E} \left| a_1 X_1 + a_2 X_2 + a_3 X_3 \right|^q,
\]
subject to the constraints $a_1 + a_2 + a_3 = 0$ and $a_1^2 + a_2^2 + a_3^2 = 1$.

\begin{enumerate}
   \item[(i)] For $q \in (-1, 0) \cup (2, 4)$, the function $f$ attains its minimum at $x_1$ and its maximum at $x_2$.
\item[(ii)] For $q \in (0, 2)\cup (4,6)$, the function $f$ is minimized at $x_2$ and maximized at $x_1$.
\item[(iii)] For $q = k > 3$, where $k$ is a non-negative integer, the function $f$ is minimized at $x_2$ and maximized at $x_1$.
\item[(iv)] Moreover, for $q = 2$ and $q = 4$, the function $f$ is constant.
\end{enumerate}
\end{proposition}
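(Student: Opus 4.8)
The plan is to exploit two features special to $n=3$ together with the constraints $\sum a_i=0$, $\sum a_i^2=1$. These fix $e_1(a)=0$ and $e_2(a)=\tfrac12\bigl((\sum a_i)^2-\sum a_i^2\bigr)=-\tfrac12$, so the admissible vectors are, up to permutation of coordinates, precisely the triples of real roots of $t^3-\tfrac12 t-e_3$, where $e_3=a_1a_2a_3$; real-rootedness of this cubic is equivalent to $\delta:=e_3^2\in\bigl[0,\tfrac1{54}\bigr]$. Since permuting the $a_i$ only permutes the i.i.d.\ $X_i$, and since $a\mapsto-a$ leaves $|X|$ unchanged (where $X=\sum a_iX_i$) while sending $e_3\mapsto-e_3$, the distribution of $|X|$ — hence $f$ — depends on $a$ only through $\delta$. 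One checks directly that $x_2$ is the case $\delta=0$ (cubic roots $0,\pm\tfrac1{\sqrt2}$) and $x_1$ the case $\delta=\tfrac1{54}$ (double root $\tfrac1{\sqrt6}$, third root $-\tfrac2{\sqrt6}$), so the two conjectured extremizers are exactly the endpoints of the $\delta$-interval and everything reduces to determining the direction of monotonicity of $f$ as a function of $\delta$ on $\bigl[0,\tfrac1{54}\bigr]$. The second feature is that the characteristic function $\phi_X(t)=\prod_{i=1}^3(1-ia_it)^{-1}$ has denominator $1-ie_1t-e_2t^2+ie_3t^3=1+\tfrac{t^2}2+ie_3t^3$, so
\[
\Re\phi_X(t)=\frac{1+t^2/2}{(1+t^2/2)^2+\delta\,t^6},
\]
which is fully explicit and, again, depends only on $\delta$.

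For the non-integer ranges I would insert this into the Fourier representations recalled in Section~\ref{sec:prelim}: the negative-moment formula for $q\in(-1,0)$, \eqref{Fourier (0,2)} for $q\in(0,2)$, \eqref{Fourier (2,4)} for $q\in(2,4)$, and Lemma~\ref{Fourier (4,6)} for $q\in(4,6)$. The only moments these formulas need are $\E X^2=\sum a_i^2=1$ and $\E X^4=4!\,h_4(a)=6$, both $\delta$-independent (from \eqref{CHP-PSP} with $p_1=0$, $p_2=1$, $p_3=3e_3$, $p_4=\tfrac12$ one gets $h_4=\tfrac14$, independently of $e_3$); this already proves (iv). Consequently, in each of the four regimes, all the $\delta$-dependence of the integrand sits in $\Re\phi_X(t)$, and after the substitution $s=t^2/2$ one computes $\partial_\delta\bigl(1-\Re\phi_X\bigr)=\dfrac{8s^3(1+s)}{\bigl((1+s)^2+8\delta s^3\bigr)^2}>0$, of constant sign in $t$. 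Tracking the sign with which $\Re\phi_X$ enters each formula, and the sign of the prefactor ($b_{-q,1}>0$ for $q\in(-1,0)$; $C_q>0$ on $(0,2)$ and $(4,6)$; $C_q<0$ on $(2,4)$), one finds that $f$ is strictly increasing in $\delta$ on $(0,2)\cup(4,6)$ and strictly decreasing on $(-1,0)\cup(2,4)$; since the endpoints of the $\delta$-interval are $x_2$ and $x_1$, this is exactly (i) and (ii). In particular $q=5\in(4,6)$ is already covered.

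It remains to treat $q=k$ a positive integer $>3$. If $k$ is even, then $\E|X|^k=\E X^k=k!\,h_k(a)$, and I would use \eqref{CHP-PSP} together with the observation that Newton's recursion for $n=3$ with $e_1=0$, $e_2=-\tfrac12$ reads $p_m=\tfrac12 p_{m-2}+e_3p_{m-3}$; an immediate induction shows every even power sum $p_{2j}$ is a polynomial in $\delta$ with nonnegative coefficients and every odd $p_{2j+1}$ equals $e_3$ times such a polynomial. Since $p_1=0$, only partitions of $k$ into parts $\ge2$ contribute to \eqref{CHP-PSP}, and for even $k$ such a partition has an even number of odd parts, supplying a factor $\delta^{\ell}$; hence $h_k(a)$ is a polynomial in $\delta$ with nonnegative coefficients — constant for $k=2,4$, and strictly increasing for $k\ge6$ (the part $6$ alone contributes $p_6=\tfrac14+3\delta$, with no cancellation since all coefficients in \eqref{CHP-PSP} are nonnegative). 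This yields (iii) for even $k$.

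The case $q=k$ an \emph{odd} integer $\ge7$ (with $k=5$ already done) is the main obstacle, since now $X$ is two-sided and $\E|X|^k\ne k!\,h_k(a)$. One route extends Lemma~\ref{Fourier (4,6)} by Haagerup's method to the interval $(k-1,k+1)$; the correction terms then involve $\E X^{2j}=(2j)!\,h_{2j}(a)$ for $3\le j\le\tfrac{k-1}2$, each a nonnegative-coefficient polynomial in $\delta$ by the previous paragraph, but $\partial_\delta$ of the integrand becomes an \emph{alternating} sum $\partial_\delta\bigl(-\Re\phi_X(t)\bigr)+\sum_{j\ge3}(-1)^j(\partial_\delta h_{2j})\,t^{2j}$, which need not be of constant sign in $t$; one must then prove that its $t^{-k-1}$-weighted integral nevertheless has a fixed sign, and this cancellation estimate is the crux. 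A second, probably more tractable route starts from the explicit density \eqref{density of a_iX_i}: setting $b_i=|a_i|$ and normalizing so that $a_3<0$ (so $b_3=b_1+b_2$ and $b_1^2+b_1b_2+b_2^2=\tfrac12$), and putting $x=b_2/b_1\in[0,1]$, one is led to
\[
\E|X|^k=\frac{k!\,\bigl[\,1+2x-2x^{k+2}-x^{k+3}+(1-x)(1+x)^{k+2}\,\bigr]}{\bigl(2(1+x+x^2)\bigr)^{k/2}\,(1-x)(2+x)(1+2x)},
\]
with $x=0$ corresponding to $x_2$ and $x=1$ (a removable zero of numerator and denominator) to $x_1$; it then remains to show this one-variable algebraic function is increasing on $[0,1]$, which — after dividing out the factor $1-x$, passing to the logarithmic derivative, and reducing to a polynomial inequality attacked via Descartes' rule of signs, as in the proof of Proposition~\ref{q n=2 pos} — is elementary in principle, the only real difficulty being to carry it out uniformly in $k$. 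The same formula with $k$ replaced by a real exponent $q>4$ would in fact reprove the $(4,6)$ case and indicates that the monotonicity persists for all $q>4$.
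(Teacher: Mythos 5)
Your reduction to the single parameter $\delta=e_3^2=(a_1a_2a_3)^2\in[0,\tfrac1{54}]$ is correct and is a cleaner organizing device than what the paper uses: the paper bounds $\prod_i(1+a_i^2t^2)$ between its values at $x_1$ and $x_2$ by ad hoc Cauchy--Schwarz and AM--GM estimates, whereas you observe directly that this product equals $(1+t^2/2)^2+\delta t^6$, so $\Re\phi_X(t)$ is globally monotone in $\delta$ for each fixed $t$, and the extremizers must be the endpoints $\delta=0$ ($x_2$) and $\delta=\tfrac1{54}$ ($x_1$). Your sign-tracking through the four Fourier formulas checks out ($C_q<0$ on $(2,4)$, $C_q>0$ on $(0,2)\cup(4,6)$, $b_{-q,1}>0$), so parts (i), (ii) and (iv) are correctly and completely proved. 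For even $k\ge6$, your argument via Newton's recursion $p_m=\tfrac12 p_{m-2}+e_3 p_{m-3}$, the resulting nonnegative-coefficient expansion of the power sums in $\delta$ (with odd $p_m$ carrying a single factor $e_3$), the fact that $p_1=0$ kills all partitions containing a part equal to $1$, and the nonnegativity of the coefficients in \eqref{CHP-PSP} is also correct and genuinely different from the paper's treatment; it is a nice algebraic shortcut not present in the paper.

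The gap is part (iii) for odd integers $k\ge7$ (you correctly dispose of $k=5$ via (ii) and $k=4$ via (iv)). Your first route, a Haagerup-type formula on $(k-1,k+1)$, runs into exactly the cancellation you flag: the Taylor-correction moments $\E X^{2j}=(2j)!\,h_{2j}(a)$ are each increasing in $\delta$, but they enter the corrected integrand with alternating signs, so $\partial_\delta$ of the integrand is no longer of one sign in $t$ and the pointwise argument that drove (i)--(ii) collapses; you do not resolve this. Your second route is exactly the paper's: reduce via \eqref{interpol} to a one-variable function of $x\in[0,1]$ and show it is monotone. But the step you defer as ``elementary in principle, the only real difficulty being to carry it out uniformly in $k$'' is precisely the entire content of the paper's Claim in the proof of this Proposition: namely that the numerator $g$ of $f'$ is an anti-palindromic polynomial (so $x=1$ is a root) whose coefficient sequence has a single sign change (so $x=1$ is its \emph{only} positive root, forcing monotonicity on $(0,1)$). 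Proving that single-sign-change statement requires the explicit computation of the generic coefficient $z_{j,k}$, its simplification to
\[
z_{j,k}=\frac{k-2j-2}{2}\left(\frac{(k+2)!\,\bigl(2k^2+7k-j^2+j(k-2)-3\bigr)}{(j+3)!\,(k-j+1)!}-81\right),
\]
and the two estimates showing the bracket is nonnegative for $j\le\lfloor(k-2)/2\rfloor$ and $k\ge6$, plus a check of the boundary coefficients $\beta_0,\beta_1,\beta_2$. None of this is in your proposal; it is the actual work, not a routine Descartes argument, and its absence leaves the odd $k\ge7$ case unproved.
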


Before proceeding with the proofs, we first characterize the extrema in the geometric case, that is $\sum_{i=1}^n a_i=0$ and $\sum_{i=1}^n a_i^2=1$. Following almost verbatim the proof of Proposition \ref{Lag integers unconditional} we obtain the following:
\begin{proposition} \label{Lag integers geometric}
    Let $n\gr1$ and  $d>3$ be a even integer. Then the extrema of $h_d$ on the space $\mathcal{S}:=\{ (x_1,\ldots,x_n)\in \mathbb{R}^n: \sum_{i=1}^n x_i=0, \sum_{i=1}^n x_i^2=1 \} $ are of the form $$\boldsymbol{x}=\left(\underbrace{a, \ldots, a}_{\gamma_1 }, \underbrace{b, \ldots, b}_{\gamma_2 },\underbrace{c,\ldots,c}_{\gamma_3 
 }\right).$$ Here, $ a $ appears $ \gamma_1 $ times, $ b $ appears $\gamma_2 $ times, and $ c $ appears $ \gamma_3 $ times, subject to the constraints constraints $\gamma_1 a^2 + \gamma_2 b^2 + \gamma_3 c^2 = 1$ and $\gamma_1 + \gamma_2 + \gamma_3 = n$.
\end{proposition}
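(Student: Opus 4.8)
The plan is to carry out the Lagrange--multiplier argument of Proposition~\ref{Lag integers unconditional}, now with the two constraints $g_1(\boldsymbol{x})=\sum_{i=1}^n x_i=0$ and $g_2(\boldsymbol{x})=\sum_{i=1}^n x_i^2=1$. First I would record that for $n\ge 2$ the set $\mathcal{S}$ is a nonempty compact $(n-2)$-sphere, in particular a smooth manifold without boundary, so that (unlike in Proposition~\ref{Lagrange integers positive}) no separate boundary analysis is needed; for $n=1$ the set $\mathcal{S}$ is empty and the statement is vacuous. Next I would verify the constraint qualification: $\nabla g_1=(1,\dots,1)$ and $\nabla g_2=2\boldsymbol{x}$ are linearly independent at every point of $\mathcal{S}$, since otherwise $\boldsymbol{x}$ would be a multiple of $(1,\dots,1)$, whence $g_1(\boldsymbol{x})=0$ forces $\boldsymbol{x}=0$, contradicting $g_2(\boldsymbol{x})=1$. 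Thus at any extremum $\boldsymbol{x}$ there are scalars $\lambda,\mu$ with $\frac{\partial h_d}{\partial x_i}(\boldsymbol{x})=\lambda+2\mu x_i$ for each $i$, which by the differentiation identity~\eqref{CHS der} reads
\[
h_{d-1}(\boldsymbol{x},x_i)=\lambda+2\mu x_i,\qquad i=1,\dots,n.
\]

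The key observation is that the new linear multiplier $\lambda$ disappears at once. For any pair with $x_i\ne x_j$, subtracting the two equations and using the difference property~\eqref{CHS diff} gives $(x_i-x_j)\,h_{d-2}(\boldsymbol{x},x_i,x_j)=2\mu(x_i-x_j)$, hence $h_{d-2}(\boldsymbol{x},x_i,x_j)=2\mu$ for \emph{every} pair of distinct coordinates; multiplying the relation above by $x_i$, summing, and applying Euler's homogeneous function theorem together with $\sum x_i=0$ and $\sum x_i^2=1$ moreover identifies $2\mu=d\,h_d(\boldsymbol{x})$, exactly as in relation~\eqref{dh_d=h_{d-2}}. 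From here the argument is verbatim that of Proposition~\ref{Lag integers unconditional}: if $\boldsymbol{x}$ assumed at least four distinct values $a,b,c,e$, then applying the last identity to the pairs $(a,b)$ and $(a,c)$ and subtracting (again via~\eqref{CHS diff}) yields $h_{d-3}(\boldsymbol{x},a,b,c)=0$, and likewise $h_{d-3}(\boldsymbol{x},a,b,e)=0$; subtracting these gives $h_{d-4}(\boldsymbol{x},a,b,c,e)=0$. Since $d>3$ is even, $d-4$ is a nonnegative even integer, while the vector $(\boldsymbol{x},a,b,c,e)$ is nonzero (its squared Euclidean norm is $1+a^2+b^2+c^2+e^2$), so this contradicts the positive definiteness of even-degree CHS polynomials from Theorem~\ref{thm:hunter-positivity} --- with the convention $h_0\equiv1$ covering the case $d=4$. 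Hence $\boldsymbol{x}$ takes at most three distinct values, which is the asserted form, and the constraints become $\gamma_1 a^2+\gamma_2 b^2+\gamma_3 c^2=1$ and $\gamma_1+\gamma_2+\gamma_3=n$.

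I do not expect a genuine difficulty here: the only structural change relative to Proposition~\ref{Lag integers unconditional} is the second Lagrange multiplier, and the linear one is annihilated by the very first subtraction. The two minor points to state with care are the constraint qualification on $\mathcal{S}$ and the fact that $\mathcal{S}$ is boundaryless, which together guarantee that the Lagrange conditions hold at every extremum of $h_d$ on $\mathcal{S}$.
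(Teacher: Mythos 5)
Your proof is correct and follows precisely the route the paper intends: the authors state the proposition with only the remark that it is proved ``almost verbatim'' as Proposition~\ref{Lag integers unconditional}, leaving the details implicit. You supply exactly those details — the two-multiplier stationarity condition $h_{d-1}(\boldsymbol{x},x_i)=\lambda+2\mu x_i$, the observation that $\lambda$ cancels in the first pairwise subtraction so that $h_{d-2}(\boldsymbol{x},x_i,x_j)=2\mu$ for every pair of distinct coordinates, the iterated subtractions yielding $h_{d-3}=0$ and then $h_{d-4}=0$ at a nonzero vector, and the contradiction with Theorem~\ref{thm:hunter-positivity}. Your extra care about the constraint qualification on $\mathcal{S}$ and the fact that $\mathcal{S}$ is a boundaryless compact manifold (so that no boundary case analysis is needed, in contrast with the non-negative regime) is a genuine and worthwhile addition that the paper omits, but it does not change the method.
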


\subsection{Sharp Bounds for $p_m$}
\begin{proposition}\label{p_k max}
Let $n \ge 2$, and let $a, b, c$ be real numbers such that 
\[
\gamma_1 a + \gamma_2 b + \gamma_3 c = 0 
\quad \text{and} \quad 
\gamma_1 a^2 + \gamma_2 b^2 + \gamma_3 c^2 = 1,
\]
and set
\[
\boldsymbol{x}
= \left(
\underbrace{a, \ldots, a}_{\gamma_1},\,
\underbrace{b, \ldots, b}_{\gamma_2},\,
\underbrace{c, \ldots, c}_{\gamma_3}
\right).
\]
 Then, for every positive integer $m$, $p_m\left(\boldsymbol{x} \right)=\gamma_1a^m+\gamma_2b^m+\gamma_3c^m$ attains its maximum at
 \[
 \left(\underbrace{-\frac{1}{\sqrt{n(n-1)}},\ldots,-\frac{1}{\sqrt{n(n-1)}}}_{n-1},\sqrt{\frac{n-1}{n}} \right),
 \]
 that is, the vector in which all but one coordinate are equal.

\end{proposition}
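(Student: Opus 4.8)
The plan is to reduce the problem to the two-variable situation handled in Proposition~\ref{Lagrange integers positive}-type arguments, but now for the power sums $p_m$ under the two linear/quadratic constraints. First I would fix the multiplicities $\gamma_1,\gamma_2,\gamma_3$ and regard $p_m(\boldsymbol{x})=\gamma_1a^m+\gamma_2b^m+\gamma_3c^m$ as a function on the ellipse cut out by $\gamma_1a+\gamma_2b+\gamma_3c=0$ and $\gamma_1a^2+\gamma_2b^2+\gamma_3c^2=1$ in $\mathbb{R}^3$, which is compact, so a maximizer exists. Lagrange multipliers give constants $\lambda,\mu$ with $m\,a^{m-1}=\lambda+2\mu a$ (and similarly for $b,c$); hence each of $a,b,c$ is a root of the single equation $m\,t^{m-1}-2\mu t-\lambda=0$. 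The key observation is that, since $\gamma_1,\gamma_2,\gamma_3\ge1$, if $a,b,c$ were pairwise distinct this polynomial of degree $m-1$ would be forced to have these three prescribed roots, which for $m-1\ge3$ is possible, so the Lagrange analysis alone does not pin down the configuration; I therefore need an additional comparison argument.

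The heart of the proof is a reduction to the case where at most two of $a,b,c$ are distinct, combined with a direct optimization. For the reduction I would use Lemma~\ref{a,b,c lemma}: among triples $(x,y,z)\in\mathbb{R}^3$ (now allowing negative values — one must check the sign bookkeeping, or apply the lemma after translating) with the \emph{same} first two power sums as a candidate extremal block, maximizing $x^k+y^k+z^k$ for all $k$ is governed by the product $xyz$, so that any interior critical configuration can be pushed to the boundary of the relevant compact set, where two coordinates coalesce or one coordinate hits an extreme value. Iterating, the maximizer of $p_m$ must have all coordinates equal to one of two values; say $\gamma$ coordinates equal to $s$ and $n-\gamma$ equal to $t$ with $\gamma s+(n-\gamma)t=0$ and $\gamma s^2+(n-\gamma)t^2=1$. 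This is now a genuinely one-parameter family: the two constraints determine $s,t$ up to the choice of $\gamma$ and a sign, giving $s=\pm\sqrt{\frac{n-\gamma}{n\gamma}}$, $t=\mp\sqrt{\frac{\gamma}{n(n-\gamma)}}$. Plugging into $p_m$ yields
\[
p_m(\boldsymbol{x})=\gamma\Big(\tfrac{n-\gamma}{n\gamma}\Big)^{m/2}(\pm1)^m+(n-\gamma)\Big(\tfrac{\gamma}{n(n-\gamma)}\Big)^{m/2}(\mp1)^m,
\]
a function of the single integer $\gamma\in\{1,\dots,n-1\}$ (and of the sign choice when $m$ is odd), which I would show is maximized at $\gamma=n-1$ with the sign making the lone coordinate positive, recovering exactly the asserted vector $\big(-\tfrac{1}{\sqrt{n(n-1)}},\dots,-\tfrac{1}{\sqrt{n(n-1)}},\sqrt{\tfrac{n-1}{n}}\big)$. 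The monotonicity in $\gamma$ reduces to an elementary inequality between the two terms, easiest to see by writing $\gamma/(n-\gamma)=\theta$ and checking that the resulting expression in $\theta$ is increasing.

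The main obstacle I anticipate is the sign handling in applying Lemma~\ref{a,b,c lemma}, which as stated requires non-negative entries, whereas in the centred regime the coordinates necessarily change sign. One clean way around this is to split into the odd-$m$ and even-$m$ cases: for even $m$ one may replace $a,b,c$ by $|a|,|b|,|c|$, but then the \emph{first} power sum is no longer preserved, so instead I would work with the pair of symmetric functions that \emph{are} preserved under the coalescing deformation (namely $\sum x_i$ and $\sum x_i^2$) and re-run the product-monotonicity computation from the proof of Lemma~\ref{a,b,c lemma} directly, noting that the identity $\partial f/\partial w = k\,h_{k-3}(a,b,c)$ and the positivity of $h_{k-3}$ for even $k-3$ — or its explicit sign otherwise — still controls the direction of the deformation without any positivity hypothesis on $a,b,c$ themselves. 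The remaining step, the one-parameter optimization over $\gamma$, is then routine. Once Proposition~\ref{p_k max} is established, Theorem~\ref{h_{2r} lower geom} for the maximum follows immediately from the power-sum expansion \eqref{CHP-PSP}, exactly as in the proof of Theorem~\ref{Hunter conj}.
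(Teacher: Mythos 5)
Your overall architecture — use the Lagrange/compactness framework to see that the extremum has at most three distinct values, reduce to two values via a product-monotonicity deformation, then optimize over the one remaining integer parameter $\gamma$ — correctly anticipates the shape of the argument, and your final one-variable optimization over $\gamma$ matches the paper. For \emph{odd} $m$ your suggested route is in fact a valid alternative to the paper's: in that case $h_{m-3}$ has even degree, so Hunter's positivity gives $h_{m-3}(a,b,c)>0$ at any nonzero triple regardless of signs, and the identity $\partial f/\partial w = m\,h_{m-3}(a,b,c)$ does force a strict increase of $p_m$ under the coalescing deformation, ruling out three distinct values. (The paper instead exploits that $x^{m-1}$ is \emph{convex} when $m$ is odd, so the Lagrange stationarity equation $m x^{m-1}=2\mu x+\lambda$ has at most two real roots — a cleaner route, but yours also works.)

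The genuine gap is the \emph{even}-$m$ case, which you flag as ``the main obstacle'' and then wave past. When $m$ is even, $h_{m-3}$ has odd degree, and in the centred regime the coordinates necessarily have mixed signs, so $h_{m-3}(a,b,c)$ can vanish or change sign; the deformation is then at a first-order critical point, and the product-monotonicity argument stalls rather than delivering a contradiction. The phrase ``or its explicit sign otherwise still controls the direction'' is not a proof: there is no a priori sign control, and pinning it down would essentially require already knowing the maximizer. This is exactly why the paper abandons the deformation idea for even $m$ and instead runs a much more delicate analysis: it extracts from the stationarity equation the relation $x^{2k-1}=x\,p_{2k}+p_{2k-1}/n$, derives the two-sided bound $1/\sqrt{n(n-1)}\le a\le 1/\sqrt{2}$ for the middle-in-modulus coordinate $a$, uses the odd-case result to bound $p_{2k-1}$ and maximality to bound $p_{2k}$, and then shows via Descartes' rule of signs and several explicit case-by-case estimates that the auxiliary polynomial $f_k$ is negative on $(1/\sqrt{n(n-1)},\,1/\sqrt{2})$, forcing $a=1/\sqrt{n(n-1)}$ and hence $\gamma_3=1$, $a=b$. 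None of this is anticipated by your plan, so as written the even-$m$ half of the proposition is unproved.
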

\begin{proof}
Since $n=2$ is trivial, assume that $n\gr3$, $\gamma_1,\gamma_2,\gamma_3\gr1$ and $m\gr3$ since $p_1(\boldsymbol{x})=0$, $p_2(\boldsymbol{x})=1$. We will distinguish two cases. If $m$ is odd, say $m=2k+1$, then using the Lagrange multiplier method, we find that an extreme point $(a,b,c)$ satisfies 
$$(2k+1)x^{2k}-2\lambda x-\mu=0,$$ for $x=a,b,c$. Since $x^{2k}$ is convex, it has at most two points of intersection with a affine function, therefore, two of $a,b,c$ are equal, say $a=b$. Then, if we set $\gamma_1+\gamma_2=\gamma$ we obtain
$$p_m=\gamma a^m+\gamma_3c^m$$ under the conditions $\gamma a+\gamma_3 c=0$ and $\gamma a^2+\gamma_3 c^2=1$. Solving for $a,c$ we and using $\gamma_3=n-\gamma$, we write
$$p_m=\frac{1}{n^{m/2}}\left(\frac{(n-\gamma)^{m/2}}{\gamma^{m/2-1}}-\frac{\gamma^{m/2}}{(n-\gamma)^{m/2-1}}\right),$$ which is clearly maximized for $\gamma=1$.
\bigskip

If $m$ is even, say $m=2k$, then using the Lagrange multiplier method we find that the vector $(a,b,c)$ which attains the maximum satisfies
\begin{equation}\label{lag_main}
2k\,x^{2k-1}-2\lambda x-\mu=0,\end{equation} for $x=a,b,c$. We can multiply this relation by $\gamma_1, \gamma_2,\gamma_3$, respectively, and add them to obtain
$kp_{2k}=~\lambda$. We multiply this relation by $\gamma_1a, \gamma_2b,\gamma_3c$, respectively, and add them to get
$2kp_{2k-1}=~n\mu$. If substitute this into \eqref{lag_main}, and divide by $2m$, 
$$x^{2k-1}=x\,p_{2k}+\frac{p_{2k-1}}{n}.$$ 
Due to the even power, suppose, without loss of generality, that $a$ is positive and $|c|>a>|b|$. Under this assumption, we first prove that
$$\frac{1}{\sqrt{n(n-1)}}\leq a\leq\frac{1}{\sqrt{2}}.$$
For the upper bound, note that 
$$2a^2\leq (\gamma_1+\gamma_3)a^2\leq \gamma_1a^2+\gamma_2b^2+\gamma_3c^2=1.$$
For the lower bound we write 
$$(\gamma_1+\gamma_2)a^2+\frac{a^2(\gamma_1+\gamma_2)^2}{\gamma_3}\geq\gamma_1a^2+\gamma_2b^2+\frac{(\gamma_1a+\gamma_2b)^2}{\gamma_3}=1,$$ or 
$$a^2\geq\frac{\gamma_3}{n(n-\gamma_3)}\geq\frac{1}{n(n-1)}.$$
From the odd case, we know that 
$$p_{2k-1}\geq -A_{n,2k-1}:= - \left[\left(\frac{n - 1}{n}\right)^{(2k - 1)/2} +(-1)^{2k-1}(n - 1)\left(\frac{1}{(n - 1)n}\right)^{(2k - 1)/2}\right].$$
On the other hand, since $(a,b,c)$ is the point that achieves the maximum we have that 
$$p_{2k}\geq A_{n,2k}.$$
Therefore,
from the main relation for $a$ satisfies the inequality 
$$a^{2k-1}\geq a\, A_{n,2k}-\frac{A_{n,2k-1}}{n}.$$
We consider the following function:
\[
f_k(x) =x^{2k-1}- x\, A_{n,2k}+\frac{A_{n,2k-1}}{n}.
\]
and the constants:
\[
s = \sqrt{\frac{n-1}{n}}, \quad
t = \frac{1}{\sqrt{n(n-1)}}, \quad
w = \frac{1}{\sqrt{2}}.
\]
Notice that from Descartes's rule or signs, $f_k$ has at most 2 positive roots.

We aim to prove that \( f_k(w) < 0 \) for every $n\gr 3$ and $k\gr2$. The conclusion is that $f_k$ has two positive roots, one of them is $t$ and the other is larger than $w$, since the limit of $f_k$ as $x$ grows to $+\infty$ is $+\infty$.  This means that in the open interval $(t,w)$ $f_k$ is negative. Therefore, $a=t$. In all the above inequalities, equality must hold, therefore, $\gamma_3=1$ and $a=b$.

Let us  begin with  \( k \geq 2 \) and $ n \geq 6$. 

We express \( f_k(w) \) as:
\begin{align*}
f_k(w) &= (w -t) \left[
 w^{2k-2} + w^{2k-3} t + \cdots + t^{2k-2}-A_{n,2k}
\right]\\
&:=\left(w-t\right)Q_k(w).
\end{align*}
Since the prefactor \(w -t > 0 \), for $n\geq 3$ the sign of \( Q_k(w) \) is the same as that of \( f_k(w) \).
If we set
\[q:=\frac{t}{w}<1
\]
then
\[
S(w):=\sum_{i=0}^{2k-2}t^i\,w^{2k-2-i}
=w^{2k-2}\sum_{i=0}^{2k-2}q^i
<\frac{w^{2k-2}}{1-q},
\]
which gives 
\[
\frac{S(w)}{s^{2k}}
<\frac{1}{1-q}\,\frac{w^{2k-2}}{s^{2k}}
=\frac{w^{-2}}{1-q}\Bigl(\frac{w^2}{s^2}\Bigr)^k\leq \frac{w^{-2}}{1-q}\Bigl(\frac{w^2}{s^2}\Bigr)^2=\frac{w^2}{s^4(1-q)}<1,
\]
since $s^4\geq (5/6)^2$ and $q\ls \frac{1}{\sqrt{15}}$, for $n\geq 6$. This means
\[
S(w)-\bigl(s^{2k}+(n-1)t^{2k}\bigr)<0.
\]

Finally, we can easily check that the above argument implies that $f_k(w)<0$ also holds for $n\geq 4$ and $k\geq 3$. Indeed,
For $n=5$ we have $s^2=4/5$ and $q=t/w=\sqrt{1/10}<1/3$, which implies
\[
\frac{w^2}{s^4(1-q)}
\le \frac{1/2}{(16/25)\cdot (2/3)}
= \frac{75}{64}<1,
\]
and the same conclusion follows. For $n=4$ we retain the factor $(w^2/s^2)^k$ and use that
\[
\frac{S(w)}{s^{2k}}
<\frac{w^{-2}}{1-q}\left(\frac{w^2}{s^2}\right)^k
=\frac{2}{1-q}\Bigl(\frac{2}{3}\Bigr)^k
\]
with $q=1/\sqrt6$. It is then easy to check that the right-hand side is $<1$ for all $k\ge4$, hence $f_k(w)<0$ in this range. Finally, in the remaining case $(n,k)=(4,3)$ we compute explicitly
\[
f_3(w)
=\frac{20\sqrt3-25\sqrt2}{288}<0.
\]
In particular, we have $f_k(w)<0$ for all $n\ge4$ and $k\ge3$.

Therefore, it remains  to check only the cases $n=3$ and any $k\gr2$, $(n,k)=(5,2)$ and $(n,k)=(4,2)$. The cases for $k=2$ have been already settled, as it can be seen in Proposition 2.1 of \cite{cirtoaje2015equal}. For $n=3$ and $k\gr2$ it suffices to consider the quantity $$\frac{a^{2k}+b^{2k}+c^{2k}}{(a^2+b^2+c^2)^k}.$$ Since $a+b+c=0$ and $a^2+b^2+c^2=1$, we can without loss of generality assume that $a,c$ are non-negative, $c\neq0 $ and set $x=a/c\gr0$ and $y=b/c$ to obtain a function of one variable $f:(0,\infty)\rightarrow \R$ as follows:
\[
f(x)=\frac{x^{2k}+(x+1)^{2k}+1}{2^k(x^2+x+1)^k}.
\]
We will prove that $f$ attains its maximum at $x=1$.
Let $g$ be the numerator of $f'$, then a direct computation shows that 
\[
\frac{g(x)}{kx}=x^{2k+1}+3x^{2k}+2x^{2k-1}+(1-x)(x+1)^{2k}-2x^2-3x-1
\]
Since $k$ is a integer we can expand the binomial and by setting $a_j:=\binom{2k}{j}$ we obtain
\begin{align*}
\frac{g(x)}{kx^2} =\;& 
  -x^{2k-1}(-3 - a_{2k} + a_{2k-1}) 
  - x^{2k-2}(-2 - a_{2k-1} + a_{2k-2}) \\
&+ \sum_{j=2}^{2k-3} (a_{j+1} - a_j)x^j + x(-2 + a_2 - a_1) 
  + (-3 + a_1 - a_0).
\end{align*}
Since $a_j = a_{2k-j}$, it follows immediately that $\frac{g(x)}{kx^2}$ is an anti-palindromic polynomial. In particular, $x = 1$ is a root. The uniqueness of the positive root then follows from the sign of  the sequence $a_{j+1}-a_j=a_j\frac{2k-2j-1}{j+1}$, together with Descartes' Rule of Signs.

\end{proof}

\begin{proof}[Proof of Theorem \ref{h_{2r} lower geom}]
For the lower bound, applying  Theorem~\ref{Hunter conj}  we conclude:
\[
h_{2r}(\boldsymbol{x}) \geq \frac{(n/2 + r - 1)!}{r! \cdot (n/2 - 1)! \cdot n^r}.
\]

For the maximum, notice that relation \eqref{CHP-PSP} suggest $$h_k(\boldsymbol{x}) \ls \sum_{m_1 + 2m_2 + \cdots + km_k = k \atop m_1\ge 0, \ldots, m_k\ge 0} \prod_{i=1}^k \frac{|p_i|^{m_i}(\boldsymbol{x})}{m_i ! \, i^{m_i}}.$$ 
Notice also that \( p_{2k} \geq 0 \). Moreover, if \( \boldsymbol{w} \) is a maximizer of \( p_{2k-1} \), then since \( p_{2k-1} \) is an odd function, it follows that \( -\boldsymbol{x} \) is a minimizer. Thus,
\[
|p_{2k-1}| \leq p_{2k-1}(\boldsymbol{w}).
\]
The  characterization of the extrema Proposition \ref{Lag integers geometric} and Proposition \ref{p_k max} now finishes the proof.
\end{proof}

\begin{proof}[Proof of Proposition \ref{h_4 geo}]
We will find upper and lower bounds for 
$$\mathbb{E}\left|\sum_{i=1}^n a_iX_i\right|^4=3+6\sum_{i=1}^n a_i^4,$$
under the conditions $\sum_{i=1}^n a_i=0$ and $\sum_{i=1}^n a_i^2=1$. 
We will use the method of Lagrange multipliers to find the maximum and the minimum which exist since the domain is compact. We are searching for all $\boldsymbol{a}=(a_1,\ldots,a_n)$ and the real numbers $\lambda,\mu$ such that for all indices $i=1,\ldots, n$ we have
$$4a_i^3-2\lambda a_i+\mu=0.$$ Multiplying by $a_i$ and summing for all $i$ we get 
\begin{equation}\label{lagr_1}4\sum_{i=1}^n a_i^4-2\lambda=0.
\end{equation}If for the indices $i\neq j$ it is true that $a_i\neq a_j$, then subtracting the two relations we get 
\begin{equation}\label{lagr_2}
4(a_i^2+a_ia_j+a_j^2)-2\lambda=0.\end{equation}
Therefore if there exists one more index $k$, such that $a_k\neq a_i$ and $a_k\neq a_j$, then $a_i+a_j+a_k=0$. Therefore the vector $\boldsymbol{a}$ has the form $$(x,x,\ldots,x,y,y,\ldots,y,z,z,\ldots, z),$$ where $x+y+z=0$. Suppose that $x$ appears $s$ times, $y$ appears $t$ times and $z$ appears $w$ times and due to the symmetry we assume that $s\geq t\geq w$. From \eqref{lagr_2} and \eqref{lagr_1}, it suffices to find upper bounds for $x^2+y^2+z^2$, under the conditions $$\begin{cases}
x+y+z=0&\\
sx+ty+wz=0&\\
sx^2+ty^2+wz^2=1
\end{cases}$$
If $s=t=w$, then $x^2+y^2+z^2=\frac{3}{n}.$
Otherwise, we can solve the above system with respect to $x,y,z$ and find that 
$$x^2+y^2+z^2=\frac{(s-t)^2+(t-w)^2+(s-w)^2}{w(s-t)^2+s(t-w)^2+t(s-w)^2}.$$
We will prove the following double inequality.
$$\frac{2}{n-1}\leq\frac{(s-t)^2+(t-w)^2+(s-w)^2}{w(s-t)^2+s(t-w)^2+t(s-w)^2}.$$
 For the left-hand side inequality, note that $n=w+s+t$ and after some algebraic manipulations we end up proving that 
$$(w-1)(w-s)(w-t)+(t-1)(t-s)(t-w)+(s-1)(s-w)(s-t)\geq 0.$$ To this end, note that the last one is equivalent to 
$$(s-t)[(s-1)(s-w)-(t-1)(t-w)]+(w-1)(s-w)(t-w) \geq 0,$$
which is true, since $w,s,t\geq 1$ and $s\geq t\geq w$.

We also need to consider the case where exactly one group is zero, say $z$. In this case, since $x + y = 0$ and also $sx + ty = 0$, together with the condition $sx^2 + ty^2 = 1$, we are led to $s = t$, and a vector of the form
\[
\left(
\underbrace{\frac{1}{\sqrt{2s}}, \ldots, \frac{1}{\sqrt{2s}}}_{s\text{-times}},
\underbrace{-\frac{1}{\sqrt{2s}}, \ldots, -\frac{1}{\sqrt{2s}}}_{s\text{-times}},
\underbrace{0, \ldots, 0}_{(n - 2s)\text{-times}}
\right).
\]
Note also that $2s = n - w$. Since we only need to bound $x^2= \frac{1}{2s}$, which is decreasing in $s$, it suffices to consider the case $2s = n - 1$ for the minimum and $s=1$ for the maximum.

It remains to check the case where the vector $\boldsymbol{a}$ has the form $$(x,x,\ldots,x,y,y,\ldots,y),$$
where $x$ appears $s$ times and $y$ appears $t$ times. Without loss of generality, assume that $x>0$ and $y<0$. Then, 
from \eqref{lagr_2} and \eqref{lagr_1}, it suffices to find upper the maximum and the minimum for $x^2+xy+y^2$, under the conditions $$\begin{cases}
sx+ty=0&\\
sx^2+ty^2=1
\end{cases}$$
Solving the system we get that 
$$x^2+y^2+xy=\frac{1}{n}\left(\frac{t}{s}+\frac{s}{t}-1\right)=\frac{1}{n}\left(\frac{n-s}{s}+\frac{s}{n-s}-1\right).$$
The function in the parenthesis takes its maximum value for $s=1$ (or $s=n-1$) and its minimum value for $s=\lfloor\frac{n}{2}\rfloor$. It is easy now to compare the extrema.
\end{proof}

\begin{proof}[Proof of Proposition \ref{geom n=3}]
For (iv), we observe that since $a_1 + a_2 + a_3 = 0$ and $a_1^2 + a_2^2 + a_3^2 = 1$, it follows that
\[
\mathbb{E}\left|a_1 X_1 + a_2 X_2 + a_3 X_3\right|^2 = 1,
\]
and
\[
\mathbb{E}\left|a_1 X_1 + a_2 X_2 + a_3 X_3\right|^4 = 3 + 6(a_1^4 + a_2^4 + a_3^4) = 6.
\]

For (i), (ii) and (iii), a simple point-wise bound for $\Re\left(\phi_{\sum a_i X_i}\right)$, combined with the Fourier formulas \eqref{Fourier (0,2)}, \eqref{Fourier (2,4)} and \eqref{Fourier (4,6)} suffices.

We observe that
\[
\phi(t) = \frac{1}{(1 + i a_1 t)(1+ia_2t)(1+ia_3t)}  
= \frac{1 - i t (a_1 + a_2 + a_3) - t^2(a_1 a_2 + a_2 a_3 + a_1 a_3) - i^3t^3 a_1 a_2 a_3 }{(1 + a_1^2 t^2)(1 + a_2^2 t^2)(1 + a_3^2 t^2)}.
\]

Since $a_1 + a_2 + a_3 = 0$ and $a_1^2 + a_2^2 + a_3^2 = 1$, this simplifies to
\[
\Re(\phi(t)) = \frac{1 + t^2/2}{(1 + a_1^2 t^2)(1 + a_2^2 t^2)(1 + a_3^2 t^2)}.
\]

We now establish a bound for $\Re(\phi(t))$:
\[
(1 + t^2/2)^2 \leq (1 + a_1^2 t^2)(1 + a_2^2 t^2)(1 + a_3^2 t^2) \leq (1 + t^2/6)^2 (1 + 2t^2/3),
\]
where the upper bound is attained at $x_2$ and the lower bound at $x_1$.

This can be proven by expanding the product:
\[
(1 + a_1^2 t^2)(1 + a_2^2 t^2)(1 + a_3^2 t^2) = 1 + t^2(a_1^2 + a_2^2 + a_3^2) + t^4(a_1^2 a_2^2 + a_2^2 a_3^2 + a_1^2 a_3^2) + t^6 a_1^2 a_2^2 a_3^2.
\]

The lower bound is immediate. For the upper bound, we may assume without loss of generality that two of the variables share the same sign; let $a_1, a_2 \gr 0$.

Applying the Cauchy–Schwarz inequality yields
\[
1 = a_1^2 + a_2^2 + a_3^2 \geq 2\left(\frac{a_1 + a_2}{2}\right)^2 + a_3^2,
\]
which implies $a_3^2 \leq \frac{2}{3}$.

Using the AM–GM inequality, we obtain
\[
a_1^2 a_2^2 a_3^2 \leq \left(\frac{a_1 + a_2}{2}\right)^4 a_3^2 = \frac{a_3^6}{2^4} \leq \frac{(2/3)^3}{16} = \frac{1}{54}.
\]
For integers $k > 3$, a different approach is required.
We define the function
$$
f(a_1,a_2,a_3)=\frac{\mathbb{E}|a_1X_1+a_2X_2+a_3X_3|^{q}}{(a_1^2+a_2^2+a_3^2)^{q/2}}.
$$
Let $a_1,a_2\gr0$, $a_2\neq 0$, and $a_3\ls0$, and write $-a_3\ls0$, so that the expression becomes $a_1X_1+a_2X_2-a_3X_3$. 
Let $x=a_1/a_2$ and $y=a_3/a_2$, then it equals
$$
\frac{\mathbb{E}|xX_1+X_2-yX_3|^{q}}{(x^2+y^2+1)^{q/2}}.
$$
Since $a_1+a_2-a_3=0$ and $a_1^2+a_2^2+a_3^2=1$, it follows that
$$
f(x)=\frac{\mathbb{E}|xX_1+X_2-(1+x)X_3|^{q}}{2^{q/2}(x^2+x+1)^{q/2}}.
$$
We now  utilize  formula \eqref{interpol} and drop the $\frac{\Gamma(1+q)}{2^{q/2+1}}$ constant for simplicity:
$$
f(x)= \frac{-1 - 2 x + 2 x^{2 + q} + x^{3 + q} + ( x-1) (1 + x)^{2 + q}}{(x-1) (x+2) (2x+1)(x^2+x+1)^{q/2}}.
$$

As a motivating example, when $q=7$, we compute:
$$
f'(x)=\frac{-3 x (10 x^9 + 80 x^8 + 249 x^7 + 363 x^6 + 183 x^5 - 183 x^4 - 363 x^3 - 249 x^2 - 80 x - 10)}{2 (x^2 + (1 + x))^{9/2} (1 + 2 x)^2 (2 + x)^2}.
$$
The function $f'$ has a unique root at $x=1$ by Descartes' rule of signs.  

\begin{claim}\label{Clm2}
Let $q=k$ be a non-negative integer. The numerator of $f'$, denoted $g$, has a root at $x=1$ and exhibits exactly one sign change in its coefficients. More generally, $g$ is an anti-palindromic polynomial with a single sign change in its coefficients.
\end{claim}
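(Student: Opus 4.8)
The plan is to produce the numerator $g$ of $f'$ in closed form from \eqref{interpol}, read off its anti-palindromy from the obvious invariance $f(1/x)=f(x)$, and then control the sign pattern of the coefficients of $g$ via the unimodality of the binomial coefficients, so that Descartes' rule of signs finishes the argument. First I would record the closed form of $f$. By \eqref{interpol}, and dropping the positive constant $\Gamma(1+k)/2^{k/2+1}$,
\[
f(x)=\frac{N(x)}{(x-1)(x+2)(2x+1)(x^2+x+1)^{k/2}},\qquad
N(x)=-1-2x+2x^{k+2}+x^{k+3}+(x-1)(1+x)^{k+2}.
\]
Here $N$ is anti-palindromic of degree $k+3$ (its monomial part is an anti-palindrome, and $(x-1)(1+x)^{k+2}$ is anti-palindromic times palindromic), so $(x-1)\mid N$ and $\tilde N:=N/(x-1)$ is palindromic; carrying out the division gives the explicit formula
\[
\tilde N(x)=2+\sum_{j=1}^{k+1}\Bigl(3+\binom{k+2}{j}\Bigr)x^j+2x^{k+2},
\]
which has strictly positive coefficients, and $f(x)=\tilde N(x)\big/\bigl((x+2)(2x+1)(x^2+x+1)^{k/2}\bigr)$. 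Differentiating and clearing the common factor $(x^2+x+1)^{k/2-1}$ yields, with $m(x):=(x+2)(2x+1)$,
\[
f'(x)=\frac{g(x)}{(x+2)^2(2x+1)^2(x^2+x+1)^{k/2+1}},\qquad
g(x)=\bigl(\tilde N' m-\tilde N m'\bigr)(x^2+x+1)-\tfrac k2\,\tilde N m\,(2x+1),
\]
and since $\tilde N>0$ on $(0,\infty)$ the positive roots of $f'$ are precisely those of $g$.

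Next I would extract the symmetry. The map $x\mapsto 1/x$ only permutes and rescales the coefficient vector $(x,1,-(1+x))$, while $f$ is $0$-homogeneous and symmetric in those coefficients, so $f(1/x)=f(x)$ (equivalently, this follows from $N$ being anti-palindromic). Differentiating this identity gives $f'(1/x)=-x^2f'(x)$, and substituting the displayed formula for $f'$ forces the functional equation $x^{k+4}g(1/x)=-g(x)$. A one-line evaluation gives $g(0)=0$, so writing $g(x)=x^{v}p(x)$ with $p(0)\ne0$ and $v\ge1$, the functional equation says exactly that $p$ is anti-palindromic of degree $k+4-2v$. In particular $p(1)=0$, hence $g(1)=0$ (also immediate from $f'(1)=0$), which is the first assertion of the claim.

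It then remains to prove that $g$ — equivalently $p$ — has exactly one sign change in its coefficient sequence. By anti-palindromy it suffices to show that the lower-half coefficients of $p$ are all $\ge0$: then the upper-half coefficients are $\le0$, and reading the coefficients from the bottom up there is exactly one change of sign. Substituting the closed form of $\tilde N$ into $g$, the coefficient of each $x^i$ is a $k$-dependent combination of a few neighbouring binomial coefficients $\binom{k+2}{j}$ together with lower-order terms; the unimodality of $j\mapsto\binom{k+2}{j}$ and the one-time sign change of the differences $\binom{k+2}{j+1}-\binom{k+2}{j}=\binom{k+2}{j}\tfrac{k+1-2j}{j+1}$ then determine the sign of each coefficient. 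A cleaner bookkeeping is to use that $p$, being anti-palindromic, is divisible by $x-1$: write $g(x)=c\,x^{v}(x-1)\,r(x)$ with $r$ palindromic, and prove instead that $r$ has positive, unimodal coefficients. For such an $r$ the consecutive differences $r_{i-1}-r_i$ are first $\le0$ and then $\ge0$, so the coefficient sequence $-r_0,\ r_0-r_1,\ \dots,\ r_{m-1}-r_m,\ r_m$ of $(x-1)r(x)$ is non-positive then non-negative, i.e.\ has exactly one sign change. Once a single sign change is confirmed, Descartes' rule of signs shows $g$ has at most one positive root, hence (with $g(1)=0$) exactly one, namely $x=1$; this is consistent with the displayed computation for $k=7$.

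The whole difficulty sits in this last step. Two features make it delicate. The naive expansion of $g$ has degree $k+5$, but its top two coefficients cancel — this is forced by the functional equation together with $g(0)=0$ — so one must carry the expansion far enough to see that $\deg g=k+3$ and to identify the genuine leading term. Also, each coefficient of $g$ braids several neighbouring binomial coefficients with the constant $3$ sitting inside $\tilde N$, and the unimodality argument has to be handled with care near the endpoints $j=0$ and $j=k+2$, where $\tilde N$ carries the ``defect'' coefficient $2$ rather than $3+\binom{k+2}{j}$. Reducing to the positivity and unimodality of the palindromic factor $r$ seems the most economical way to tame this; everything else — the functional equation, the halving of the problem to one side of the coefficient vector, and the final invocation of Descartes — is soft.
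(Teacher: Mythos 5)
Your derivation of anti-palindromy is both correct and genuinely cleaner than the paper's. The paper establishes $\beta_j=-\beta_{k+2-j}$ by expanding $g$ fully with $a_j=\binom{k+2}{j}+3$ and matching coefficient pairs $z_{j,k}=-w_{j,k}$ by hand; you instead observe that the change of variable $x\mapsto1/x$ permutes the coefficient triple $(x,1,-(1+x))$ after a rescaling, so $f(1/x)=f(x)$ by homogeneity and symmetry, and then the functional equation $x^{k+4}g(1/x)=-g(x)$ drops out of $f'(1/x)=-x^2f'(x)$ once you track the degree of the denominator. That is a soft and attractive shortcut, and it also gives $g(1)=0$ for free.

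However, the core content of the claim — that $g$ has \emph{exactly one} sign change — is not actually proved in your proposal; it is only sketched. You correctly reduce the question to showing the lower-half coefficients of $p=g/x^v$ are nonnegative, and you suggest the cleaner reformulation $g=c\,x^v(x-1)r(x)$ with $r$ palindromic, reducing the task to proving $r$ has positive unimodal coefficients. But neither version of the sign estimate is carried out: you gesture at "the unimodality of $j\mapsto\binom{k+2}{j}$" and "the one-time sign change of the differences," and you explicitly concede that "the whole difficulty sits in this last step." In the paper that difficulty is dealt with concretely: each coefficient $\beta_{j+2}$ is written as $z_{j,k}=\frac{k-2j-2}{2}\bigl(\frac{(k+2)!\,(2k^2+7k-j^2+j(k-2)-3)}{(j+3)!\,(k-j+1)!}-81\bigr)$, the bracketed factor is shown to exceed $81$ for $k\ge6$ using a binomial lower bound, the boundary coefficients $\beta_0,\beta_1,\beta_2$ are evaluated separately, and the small cases $3<k\le5$ are verified directly. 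None of that (or an equivalent estimate for $r$) appears in your argument, so as written the proposal does not establish the sign-change part of the claim. To make it a proof, you would need to actually compute the coefficients of $r$ (or of $g$) from $\tilde N$, control the cancellations at the top two degrees that you correctly flag, and produce an inequality playing the role of $z_{j,k}\ge0$, including the endpoint cases where $\tilde N$ carries the anomalous coefficient $2$ instead of $3+\binom{k+2}{j}$.
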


\begin{proof}[Proof of Claim \ref{Clm2}]
From expanding $(x+1)^{k+2}$, we get
$$
f(x)=\frac{2x^{k+2}+2+\sum_{j=1}^{k+1}x^j\left(\binom{k+2}{j}+3\right)}{(x^2+x+1)^{k/2}(2x+1)(x+2)}.
$$
Direct computation shows:
\begin{align*}
g(x) &= (2x^4+7x^3+9x^2+7x+2)\left[2(k+2)x^{k+1}+ \sum_{j=1}^{k+1} jx^{j-1}\left(\binom{k+2}{j}+3\right)\right] \\
&\quad - \frac{1}{2}( x^3(4k+8) +  x^2(12k+18) +  x(9k+18) + 2 k + 10) \left(2x^{k+2}+2+\sum_{j=1}^{k+1}x^j\left(\binom{k+2}{j}+3\right)\right).
\end{align*}

Define  $a_j=\binom{k+2}{j}+3$, we notice that the constant term, $x^{k+5}$, and $x^{k+4}$ vanish, and thus a direct computation shows:
\begin{align*}
\frac{g(x)}{x} &= 
a_1(2 - k) + 4a_2 - (9k + 18) \\
&\quad + x\left(-\tfrac{9}{2}k\,a_1 + (9 - k)a_2 + 6a_3 - (12k + 18)\right) \\
&\quad + x^2\left[ 
  -(4k + 8) 
  - (6k + 2)a_1 
  + \left(9 - \tfrac{9}{2}k\right)a_2 
  + (16 - k)a_3 
  + 8a_4 
\right] \\
&\quad + \sum_{j=1}^{k - 3} x^{j+2} \Big[
  (2j - 2k - 4)a_j 
  + (7j - 6k - 2)a_{j+1} 
  + \left(9j - \tfrac{9}{2}k + 9\right)a_{j+2} 
  + (7j-k+16)a_{j+3}+(2j+8)a_{j+4}
\Big] \\
&\quad + x^{k} \Big[
  (4k + 8)  
  + (6k + 2)a_{k+1}
  - \left(9 - \tfrac{9}{2}k\right)a_k
  - (16 - k)a_{k-1} 
  - 8a_{k-2} 
\Big] \\
&\quad + x^{k+1} \left[
  \tfrac{9}{2}k\,a_{k+1} 
  - (9 - k)a_{k}
  - 6a_{k-1} 
  + (12k + 18)
\right] \\
&\quad + x^{k+2} \left[
  -(2 - k)a_{k+1} 
  - 4a_k 
  + (9k + 18)
\right]
\end{align*}

Note that $a_j=a_{k+2-j}$, and all coefficients of $g(x)$ are symmetric except for a sign flip. So we write:
$$
g(x)=x\sum_{j=0}^{k+2}\beta_jx^j,
$$
and its easy to see that $\beta_j=-\beta_{k+2-j}$, i.e., $g(x)/x$ is anti-palindromic. 
Let $z:=z_{j,k}$ be the quantity inside the sum for $j$ and $w:=w_{j,k}$ for $k-j-2$. Then,
\[
z=a_{j}(2j-2k-4)+a_{j+1}(7j-6k-2)+a_{j+2}(9j-9/2k+9)+a_{j+3}(7j-k+16)+a_{j+4}(2j+8)
\]
and 
\[
w =a_{k-j-2}(-2j-8)+a_{k-j-1}(k-7j-16)+a_{k-j}(9/2k-9j-9)+a_{k-j+1}(6k-7j+2)+a_{k-j+2}(2k-2j+4)
\]
We now notice that $z=-w$, proving anti-palindromicity and thus confirming $x=1$ is a root. 

To ensure uniqueness of the root, we show the first $\lfloor(k+2)/2\rfloor$ coefficients have the same sign. 
Since we can easily deal we the terms outside of the sum, we want to prove that $z_{j,k}\gr0$ for $j=1,2,\ldots,\lfloor(k-2)/2\rfloor$.
Direct computations show that 
\[
z_{j,k}=\frac{(k+2)!(2j-k+2)(j^2+j(2-k)-2k^2-7k+3)}{2j!(k-j-2)!(j+1)(j+2)(j+3)(k-j+1)(k-j)(k-j-1)} +\frac{81}{2}(2j-k+2)
\]
\[
=\frac{(k-2j-2)}{2}\bigg(\frac{(k+2)!(2k^2+7k-j^2+j(k-2)-3)}{(j+3)!(k-j+1)!}-81\bigg).
\]
Since $k-2j-2\gr0$, it suffices to show that the second parenthesis is non-negative. Indeed, we have that for $k\geq 6$
$$2k^2+7k-j^2+j(k-2)-3\geq \frac{7k^2}{4}+8k-4>81$$
and 
$$\frac{(k+2)!}{(j+3)!(k-j+1)!}=\binom{k+2}{j+1}\frac{1}{(j+2)(j+3)}\geq \binom{k+2}{2}\frac{4}{(k+2)(k+4)}>1.$$
This inequality holds for $k\ge6$, and for $3< k\le5$ it can be checked directly.

Finally, we verify positivity of the remaining coefficients:
\begin{align*}
\beta_0 &= (k-2)(k-4) \ge 0, \\
\beta_1 &= \frac{1}{2}(k-2)(k-4)(k+9)\ge 0, \\
\beta_2 &= \frac{1}{12}(k-4)(2k^3+15k^2+49k-270) \ge 0.
\end{align*}
\end{proof}
By Descartes' rule of signs, $f'$ has a unique root with sign pattern $+,-$ so $x=0$ is a minimum and $x=1$ a maximum.
\end{proof}

\section{Minimum of $h_{2k}$ under the $\|x\|_{\infty}$ constraint}
In our setting, the sharp lower comparison between $\|A\|_{H_d}$ and the
operator norm $\|A\|_{\mathrm{op}}$ is controlled by the minimum of $h_d$ on
the $\ell_\infty$–sphere, since
\[
\|A\|_{H_d}^d
= h_d\big(s_1(A),\ldots,s_n(A)\big)
= \|A\|_{\mathrm{op}}^d\,h_d\!\left(
\frac{s_1(A)}{\|A\|_{\mathrm{op}}},\ldots,
\frac{s_n(A)}{\|A\|_{\mathrm{op}}}
\right),
\]
so the best constant $C_{n,d}$ in an inequality of the form
\[
C_{n,d}\,\|A\|_{\mathrm{op}}\le \|A\|_{H_d}
\]
is exactly the $d$-th root of $\min\{h_d(x):\|x\|_\infty=1\}$. In
Theorem~\ref{thm:min-h-2k-linfty} below we show that every non-vertex local
minimiser of $h_{2k}$ on the $\ell_\infty$–sphere has the very rigid form
$(t,\ldots,t,1)$, with $t$ determined by a one-variable polynomial equation.
This reduces the optimal comparison between $\|A\|_{H_d}$ and
$\|A\|_{\mathrm{op}}$ to a one-dimensional optimisation problem and
characterises the extremal matrices as those whose normalised singular-value
vector has this $(t,\ldots,t,1)$ structure; see
Corollary~\ref{cor:matrix-op-vs-Hd}.
\begin{theorem}
\label{thm:min-h-2k-linfty}
Let $n\ge2$ and $k\ge1$ be integers, and let
$
S_\infty^{n-1}:=\{x\in\mathbb{R}^n:\|x\|_\infty=1\}.$
The global minimum of $h_{2k}$ on $S_\infty^{n-1}$ is always attained at a point of the form $(t,\ldots,t,1)$ where $t$ is the unique root of $$\varphi_{k,n}(t)
        := h_{2k-1}(\underbrace{t,\ldots,t}_{n-1},1)$$
in $(-1,0)$. Moreover, the value of $h_{2k}$ at this vector equals 
$$\binom{n+2k-1}{2k}\,t^{2k}.$$
\end{theorem}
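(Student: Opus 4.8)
The plan is to use the Schur-convexity of $h_{2k}$ to collapse the problem onto a one-parameter diagonal, to settle the resulting one-variable optimization by an elementary monotonicity argument, and to read off the optimal value from Pascal's identity. \emph{Step 1 (reduction to the diagonal).} Since $h_{2k}(-x)=h_{2k}(x)$ and $S_\infty^{n-1}$ is compact, a global minimizer $x$ exists, and after a global sign change and a permutation of coordinates we may assume $x_1=1$ and $x_2,\dots,x_n\in[-1,1]$. Put $\bar t:=\frac{1}{n-1}\sum_{i=2}^{n}x_i\in[-1,1]$. The key point is that
\[
\bigl(1,\underbrace{\bar t,\dots,\bar t}_{n-1}\bigr)\ \prec\ \bigl(1,x_2,\dots,x_n\bigr):
\]
the two vectors have the same sum; the largest entry of each decreasing rearrangement equals $1$, since all coordinates are $\le 1$; and for $2\le j\le n$ the sum of the $j$ largest entries of $(1,x_2,\dots,x_n)$ is $1$ plus the sum of the $j-1$ largest of $x_2,\dots,x_n$, hence at least $1+(j-1)\bar t$, because the average of any $j-1$ of $n-1$ reals is at least their overall average. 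Tao's theorem (Theorem~\ref{thm:Tao}(ii)) then gives $h_{2k}(x)\ge h_{2k}\bigl(1,\underbrace{\bar t,\dots,\bar t}_{n-1}\bigr)$, and since every diagonal vector $(\underbrace{t,\dots,t}_{n-1},1)$ lies in $S_\infty^{n-1}$, the minimum of $h_{2k}$ over $S_\infty^{n-1}$ equals the minimum of $V(t):=h_{2k}(\underbrace{t,\dots,t}_{n-1},1)$ over $t\in[-1,1]$.

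\emph{Step 2 (the one-variable problem).} By the differentiation rule \eqref{CHS der}, $V'(t)=(n-1)\,h_{2k-1}(\underbrace{t,\dots,t}_{n},1)$, and differentiating once more (using $\ell\binom{n-1+\ell}{\ell}=n\binom{n+\ell-1}{\ell-1}$) gives $\frac{d}{dt}h_{2k-1}(\underbrace{t,\dots,t}_{n},1)=n\,h_{2k-2}(\underbrace{t,\dots,t}_{n+1},1)\ge 0$, by positivity of even-degree complete homogeneous polynomials (Theorem~\ref{thm:hunter-positivity}). Hence $h_{2k-1}(\underbrace{t,\dots,t}_{n},1)$ is strictly increasing on $(-1,1)$; from the expansion $h_m(\underbrace{t,\dots,t}_{r},1)=\sum_{\ell=0}^{m}\binom{r-1+\ell}{\ell}t^{\ell}$ one reads off that it equals $1>0$ at $t=0$, while a short induction on $n$ (based on Pascal's recursion for the partial sums $\sum_{\ell=0}^{M}(-1)^\ell\binom{n-1+\ell}{\ell}$) shows it is negative at $t=-1$. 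Thus it has a unique root $t^{*}\in(-1,0)$—this is the root of $\varphi_{k,n}$ in $(-1,0)$ from the statement—and $V$ is strictly decreasing on $[-1,t^{*}]$ and strictly increasing on $[t^{*},1]$, so $\min_{[-1,1]}V=V(t^{*})$.

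\emph{Step 3 (the optimal value).} It remains to evaluate $V(t^{*})$. I will establish the polynomial identity
\[
h_{2k}\bigl(\underbrace{t,\dots,t}_{n-1},1\bigr)=(1-t)\,h_{2k-1}\bigl(\underbrace{t,\dots,t}_{n},1\bigr)+\binom{n+2k-1}{2k}\,t^{2k},
\]
which follows by expanding both complete homogeneous polynomials via the formula above and applying Pascal's rule twice: once to see that the coefficient of $t^{\ell}$ in $(1-t)\,h_{2k-1}(\underbrace{t,\dots,t}_{n},1)$ is $\binom{n-2+\ell}{\ell}$ for $0\le\ell\le 2k-1$, and again to merge the two $t^{2k}$-contributions into $\binom{n+2k-1}{2k}$. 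Setting $t=t^{*}$, where the middle term vanishes, yields $V(t^{*})=\binom{n+2k-1}{2k}(t^{*})^{2k}$. Combined with Steps 1 and 2, and with the equality case of Theorem~\ref{thm:Tao}(ii) together with the strict monotonicity of $V$, this shows that the global minimum of $h_{2k}$ on $S_\infty^{n-1}$ equals $\binom{n+2k-1}{2k}(t^{*})^{2k}$ and is attained exactly at the permutations and sign reversals of $(\underbrace{t^{*},\dots,t^{*}}_{n-1},1)$.

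\emph{Main obstacle.} The delicate point is the majorization in Step 1. The preliminary global sign change is indispensable: it guarantees that some coordinate is exactly $+1$, hence maximal, which is precisely what forces the top-one majorization inequality to be an equality and makes $(1,\bar t,\dots,\bar t)$ genuinely majorized by the original vector; without it the top-one comparison can fail. Once the problem has been put on the diagonal, the monotonicity argument of Step 2 and the Pascal telescoping of Step 3 are routine, and the one remaining elementary input—that $h_{2k-1}(\underbrace{t,\dots,t}_{n},1)$ is negative at $t=-1$—is checked by the indicated induction.
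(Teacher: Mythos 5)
Your proof is correct, and it takes a genuinely different route to the key reduction step than the paper does. The paper reduces to the diagonal form $(t,\dots,t,1)$ via Lagrange-type stationarity on a face of the cube: the stationarity condition $h_{2k-1}(x^\ast,x_i^\ast)=0$ is combined with the difference identity~\eqref{CHS diff} and the strict positivity of $h_{2k-2}$ (Theorem~\ref{thm:hunter-positivity}) to force $x_i^\ast=x_j^\ast$ for $i,j<n$, and Schur-convexity is then brought in only to rule out vertices of the cube. Your Step~1 bypasses the stationarity argument entirely: the single majorization $(1,\bar t,\dots,\bar t)\prec(1,x_2,\dots,x_n)$, valid once $x_1=1$ is the (possibly tied) maximum of the coordinates, together with Tao's Schur-convexity theorem, collapses every point of $S_\infty^{n-1}$ (vertices included) onto the diagonal in one shot, and the equality case of Schur-convexity gives the characterization of minimisers for free. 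The one delicate point you correctly flag is that the global sign change is needed before assuming $x_1=1$; I checked the majorization chain and it holds. Your Step~2 is the same monotonicity argument as the paper's Lemma~\ref{lem:root-unique}, just phrased combinatorially via $\tfrac{d}{dt}h_{2k-1}(t[n],1)=n\,h_{2k-2}(t[n+1],1)>0$ rather than probabilistically via $\E[S\,Y_t^{2k-2}]>0$. Your Step~3 identity $h_{2k}(t[n-1],1)=(1-t)\,h_{2k-1}(t[n],1)+\binom{n+2k-1}{2k}t^{2k}$ is exactly the combination of the paper's \eqref{eq:lift-drop-zero} with the one-term relation between $H_{2k}^{(n)}$ and $H_{2k-1}^{(n)}$, and I verified it by matching coefficients with Pascal's rule; it is correct. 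Net effect: same conclusion, cleaner route to the structural result, at the modest cost of invoking Tao's equality case where the paper uses Hunter positivity directly.

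One side remark, not a defect of your proof but worth noting: the stationarity condition you (correctly) derive is $h_{2k-1}(\underbrace{t,\dots,t}_{n},1)=0$ with $n$ copies of $t$ (this also matches the paper's Lemma~\ref{lem:closed-form-critical} and the final value $\binom{n+2k-1}{2k}t^{2k}$, as one can check at $n=2$, $k=1$ where the root is $t=-\tfrac12$ and the minimum is $\tfrac34$). The theorem statement's displayed $\varphi_{k,n}(t):=h_{2k-1}(\underbrace{t,\dots,t}_{n-1},1)$, with $n-1$ copies of $t$, appears to be a misprint; when you write that your $t^\ast$ ``is the root of $\varphi_{k,n}$ from the statement,'' you are in fact using the corrected form with $n$ copies, which is the right one.
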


\begin{proof}
Since $h_{2k}$ is continuous and $S_\infty^{n-1}$ is compact, $h_{2k}$ attains a global minimum on $S_\infty^{n-1}$. Let $x^\ast=(x_1^\ast,\ldots,x_n^\ast)\in S_\infty^{n-1}$ be a global minimiser which is not a vertex of $\{\pm1\}^n$. Since $h_{2k}$ is invariant under permutations of the coordinates and under the global sign change $x\mapsto -x$, we may assume without loss of generality that \(
x_n^\ast = 1\). We will first show that 
$$|x_i^\ast|<1,$$ for $i=1,\ldots,n-1$. If this is not the case, then there exists some $j,k$, such that $|x_j^\ast|=1$ and $|x_k^\ast|=s<1$. Using Theorem \ref{thm:Tao} 
$$h_{2k}(x^\ast)> h_{2k}(y^\ast),$$ where $y^\ast$ has exactly the same coordinates as $x^\ast$, except $x_j^\ast,x_k^\ast$, where $y^\ast$ has their arithmetic mean. Since $y^\ast\in S_{\infty}^{n-1}$, we arrive at a contradiction.  
Thus $x^\ast$ lies in the relative interior of the $(n-1)$–dimensional face
\[
F := \{x\in[-1,1]^n : x_n=1\}.
\]
On the relative interior of $F$ the coordinates $x_1,\ldots,x_{n-1}$ are unconstrained, so the restriction of $h_{2k}$ to $F$ has vanishing gradient at $x^\ast$, i.e.
\[
\frac{\partial}{\partial x_i} h_{2k}(x^\ast) = 0,
\]
for all $i=1,\ldots,n-1$.
By Lemma~\ref{Hunter lemma} we have, for each $i$,
\[
\frac{\partial}{\partial x_i} h_{2k}(x)
    = h_{2k-1}(x,x_i),
\]
Hence
\[
h_{2k-1}(x^\ast,x_i^\ast)=0
\]
for all $i=1,\ldots,n-1$.
Fix distinct indices $i,j\in\{1,\ldots,n-1\}$. Using the difference identity~\eqref{CHS diff} with
\[
x=(x_1^\ast,\ldots,x_n^\ast),\quad a=x_i^\ast,\quad b=x_j^\ast,
\]
we obtain
\[
h_{2k-1}(x^\ast,x_i^\ast)-h_{2k-1}(x^\ast,x_j^\ast)
    =(x_i^\ast-x_j^\ast)\,h_{2k-2}(x^\ast,x_i^\ast,x_j^\ast).
\]
The left-hand side vanishes, so
\[
(x_i^\ast-x_j^\ast)\,h_{2k-2}(x^\ast,x_i^\ast,x_j^\ast)=0.
\]
By Hunter’s positivity theorem (Theorem~\ref{thm:hunter-positivity}), the even-degree polynomial $h_{2k-2}$ is strictly positive at every non-zero vector. The vector
\[
(x^\ast,x_i^\ast,x_j^\ast)
=(x_1^\ast,\ldots,x_n^\ast,x_i^\ast,x_j^\ast)
\]
is non-zero because $x_n^\ast = 1$, hence 
$h_{2k-2}(x^\ast,x_i^\ast,x_j^\ast)>0$.
Therefore we must have $x_i^\ast=x_j^\ast$ for all $1\le i,j\le n-1$. This shows that every non-vertex local minimiser has the form
\[
x^\ast=(t,\ldots,t,1)
\]
for some $t\in(-1,1)$.

\noindent To find the global minimiser we need to check the value of $h_{2k}$ at every vertex of the cube $\{\pm1\}^n$. However, if $(\epsilon_1,\ldots,\epsilon_{n-1})\in \{\pm 1\}^{n-1}$, using Theorem \ref{thm:Tao}, we obtain 
$$h_{2k}(\epsilon_1,\ldots,\epsilon_{n-1},1)\geq h_{2k}(A,\ldots,A,1),  $$
where $$A=\frac{\sum_{i=1}^{n-1}\epsilon_i}{n-1}\in [-1,1],$$ therefore the minimum is not attained at a vertex, except possibly the vertices $(1,\ldots, 1)$ and $(-1,\ldots,-1,1)$. We know that the global maximum is attained in the first one. For the latter, observe that Theorem \ref{thm:Tao} reassures that $h_{2k}$ achieves a smaller value at $(1,1,\ldots, 1,0,0)$. This observation completes the proof. The uniqueness of $t$ and the exact form of the minimum will follow from the lemmata below.  
\begin{lemma}\label{lem:closed-form-critical}
Fix integers $n\ge2$ and $k\ge1$. For $t\in\mathbb{R}$ and $m\in\mathbb{N}$ set
\[
H_m^{(n)}(t):=h_m(\underbrace{t,\ldots,t}_{n},1).
\]
Then
\begin{equation}\label{eq:explicit-Hm}
H_m^{(n)}(t)
=\sum_{j=0}^{m} \binom{n+j-1}{j}\,t^{j}.
\end{equation}
In particular, there exists a unique $t_{n,k}\in(-1,0)$ such that
\[
H_{2k-1}^{(n)}(t_{n,k})=h_{2k-1}(t_{n}[n],1)=0,
\]
and the corresponding interior critical point for the minimization of
$h_{2k}$ on the $\ell_\infty$–sphere $\{a\in\mathbb{R}^n:\|a\|_\infty=1\}$
is \(
a^\ast=(\underbrace{t_{n,k},\ldots,t_{n,k}}_{n-1},1).
\)
At this point we have the closed form
\begin{equation}\label{eq:value-at-critical}
h_{2k}(a^\ast)
=h_{2k}(\underbrace{t_{n,k},\ldots,t_{n,k}}_{n-1},1)
=\binom{n+2k-1}{2k}\,t_{n,k}^{2k}.
\end{equation}
\end{lemma}

\begin{proof}
The generating function representation
\[
\sum_{m=0}^\infty h_m(a_1,\ldots,a_{n+1})z^m
=\frac{1}{(1-a_1z)\cdots(1-a_{n+1}z)}
\]
with $a_1=\cdots=a_n=t$ and $a_{n+1}=1$ gives
\[
\sum_{m=0}^\infty H_m^{(n)}(t)\,z^m
=\frac{1}{(1-tz)^n(1-z)}.
\]
Expanding
\[
\frac{1}{(1-tz)^n}
=\sum_{j=0}^\infty \binom{n+j-1}{j} t^j z^j,
\qquad
\frac{1}{1-z}=\sum_{r=0}^\infty z^r
\]
and collecting the coefficient of $z^m$ yields \eqref{eq:explicit-Hm}.

We next relate $h_{2k}$ on $(t,\ldots,t,1)$ and on
$(t,\ldots,t,1,0)$.  Using the difference identity
$h_{m-1}(x,a)-h_{m-1}(x,b)=(a-b)\,h_{m-2}(x,a,b)$ from
Lemma~\ref{Hunter lemma} with $a=t$ and $b=0$ we obtain, for any $m\ge1$,
\[
h_m(t[n-1],1)
=h_m(t[n],1,0)-t\,h_{m-1}(t[n-1],1,t,0).
\]
By symmetry of $h_m$ this becomes
\begin{equation}\label{eq:lift-drop-zero}
h_m(\underbrace{t,\ldots,t}_{n-1},1)
=h_m(\underbrace{t,\ldots,t}_{n},1)
 -t\,h_{m-1}(\underbrace{t,\ldots,t}_{n},1).
\end{equation}

Let $t_{n,k}\in(-1,0)$ be a solution of the stationarity equation
\[
h_{2k-1}(t_{n,k}[n],1)=H_{2k-1}^{(n)}(t_{n,k})=0.
\]
(Existence and uniqueness of such a solution in $(-1,0)$ is stated in
Lemma~\ref{lem:root-unique} below.)  Plugging $m=2k$ and $t=t_{n,k}$
into \eqref{eq:lift-drop-zero} and using
$h_{2k-1}(t_{n,k}[n],1)=0$ gives
\begin{equation}\label{eq:drop-zero-at-root}
h_{2k}(t_{n,k}[n-1],1) = h_{2k}(t_{n,k}[n],1)
=H_{2k}^{(n)}(t_{n,k}).
\end{equation}

Finally, applying \eqref{eq:explicit-Hm} with $m=2k$ we have
\[
H_{2k}^{(n)}(t)
=\sum_{j=0}^{2k} \binom{n+j-1}{j}\,t^{j}
=H_{2k-1}^{(n)}(t)+\binom{n+2k-1}{2k}t^{2k}.
\]
Evaluating at $t=t_{n,k}$ and using
$H_{2k-1}^{(n)}(t_{n,k})=0$ yields
\[
H_{2k}^{(n)}(t_{n,k})=\binom{n+2k-1}{2k}t_{n,k}^{2k}.
\]
Combining this with \eqref{eq:drop-zero-at-root} gives
\eqref{eq:value-at-critical}.
\end{proof}

\begin{lemma}\label{lem:root-unique}
For each $n\ge1$ and $k\ge1$ there exists a unique $t_{n,k}\in(-1,0)$
such that
\[
h_{2k-1}(\underbrace{t_{n,k},\ldots,t_{n,k}}_{n},1)=0.
\]
The sequence
$\{t_{n,k}\}_{k\ge1}$ satisfies
\[
\lim_{k\to\infty}t_{n,k}=-1.
\]
\end{lemma}
\begin{proof}
Let $X_1,\dots,X_{n+1}$ be i.i.d.\ standard exponential random variables. By
the moment representation of complete homogeneous symmetric polynomials,
\[
\Phi(t)
= h_{2k-1}(\underbrace{t,\dots,t}_{n},1)
= \frac{1}{(2k-1)!}\,
\mathbb{E}\bigl( t(X_1+\cdots+X_n)+X_{n+1}\bigr)^{2k-1}.
\]
Set
\[
S:=X_1+\cdots+X_n,\qquad Y_t:=tS+X_{n+1},
\]
so that
\[
\Phi(t)=\frac{1}{(2k-1)!}\,\mathbb{E}\bigl(Y_t^{\,2k-1}\bigr).
\]

Since all moments of $Y_t$ are finite and $t\mapsto Y_t$ is affine, we may
differentiate under the expectation to obtain, for every $t\in\mathbb{R}$,
\[
\Phi'(t)
= \frac{1}{(2k-1)!}\,\mathbb{E}\!\left[(2k-1)S\,Y_t^{\,2k-2}\right]
= \frac{1}{(2k-2)!}\,\mathbb{E}\bigl[S\,Y_t^{\,2k-2}\bigr].
\]

Now fix $t\in(-1,0)$. Then $S>0$ almost surely, and $2k-2$ is even, so
$Y_t^{\,2k-2}\ge0$ almost surely. Moreover, the joint law of $(S,X_{n+1})$ is
absolutely continuous, hence
\[
\mathbb{P}\bigl(S=0\bigr)=\mathbb{P}\bigl(Y_t=0\bigr)=0,
\]
which implies
\[
S\,Y_t^{\,2k-2}>0 \quad\text{almost surely}.
\]
Therefore
\[
\Phi'(t)=\frac{1}{(2k-2)!}\,\mathbb{E}\bigl[S\,Y_t^{\,2k-2}\bigr]>0
\qquad\text{for all }t\in(-1,0),
\]
so $\Phi$ is strictly increasing on $(-1,0)$.

Consequently $\Phi$ can have at most one zero in $(-1,0)$. On the other hand,
\[
\Phi(0)=h_{2k-1}(0,\dots,0,1)=1>0,
\]
and a direct computation from the explicit formula for $h_{2k-1}(t,\dots,t,1)$
shows that $\Phi(-1)<0$. By the intermediate value theorem there exists
$t_\ast\in(-1,0)$ with $\Phi(t_\ast)=0$, and by strict monotonicity this zero
is unique. Since $\Phi'(t_\ast)>0$, the root is simple.

To understand the asymptotic behaviour, note that for $|\rho|<1$
\[
(1+\rho)^{-n}
=\sum_{j=0}^{\infty} (-1)^j
\binom{n+j-1}{j}\rho^j.
\]
Thus, for $0<\rho<1$ we may write
\[
(1+\rho)^{-n}
=P_{n,k}(\rho)+R_{n,k}(\rho),
\qquad
R_{n,k}(\rho)
:=\sum_{j=2k}^{\infty} (-1)^j
\binom{n+j-1}{j}\rho^j.
\]
At the root $\rho=\rho_{n,k}=-t_{n,k}$ we have $P_{n,k}(\rho_{n,k})=0$, and
hence
\begin{equation}\label{eq:root-tail-identity}
(1+\rho_{n,k})^{-n}=R_{n,k}(\rho_{n,k}).
\end{equation}

Fix $\varepsilon\in(0,1)$ and set $q:=1-\varepsilon$.  For
$0<\rho\le q$ and all $j\ge2k$ we use the crude bound
$\binom{n+j-1}{j}\le C_n j^{n-1}$ (for some constant $C_n$ depending
only on $n$) to obtain
\[
|R_{n,k}(\rho)|
\le\sum_{j=2k}^\infty \binom{n+j-1}{j}\rho^j
\le C_n\sum_{j=2k}^\infty j^{n-1} q^j
\le C'_n (2k)^{n-1} q^{2k},
\]
for a suitable constant $C'_n$.  The right-hand side tends to $0$ as
$k\to\infty$, uniformly in $0<\rho\le q$.  On the other hand
$(1+\rho)^{-n}\ge(1+q)^{-n}>0$ for all such $\rho$.  Therefore, for
$k$ sufficiently large the identity \eqref{eq:root-tail-identity}
cannot hold with $\rho_{n,k}\le q$.  Since $q<1$ was arbitrary, it
follows that $\rho_{n,k}\to1$ as $k\to\infty$, and hence
$t_{n,k}=-\rho_{n,k}\to-1$.
\end{proof}
\end{proof}

\begin{corollary}\label{cor:matrix-op-vs-Hd}
Let $d=2k$ be an even integer, and let $A\in M_n(\mathbb{C})$ have singular values
$s_1(A)\ge\cdots\ge s_n(A)\ge0$. Define the norm induced by the complete homogeneous
symmetric polynomial $h_d$ by
\[
\|A\|_{H_d} := h_d\big(s_1(A),\ldots,s_n(A)\big)^{1/d}.
\]
Then,
\[
\binom{n+2k-1}{2k}^{1/2k}\,|t_{n,k}|\,\|A\|_{\mathrm{op}}
\;\le\;
\|A\|_{H_{2k}}
\;\le\;
\binom{n+2k-1}{2k}^{1/2k}\,\|A\|_{\mathrm{op}},
\]
where $t_{n,k}$ is the unique number in $(-1,0)$ such that \[
h_{2k-1}(\underbrace{t_{n,k},\ldots,t_{n,k}}_{n},1)=0,
\]
and the bound is optimal.
Equality in the lower bound can occur
only if the normalised singular-value vector $s(A)/\|A\|_{\mathrm{op}}$ is a minimiser
of $h_d$ on $S_\infty^{n-1}$. 
\end{corollary}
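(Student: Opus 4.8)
The plan is to obtain the corollary as a direct consequence of Theorem~\ref{thm:min-h-2k-linfty}, using only the common homogeneity of $h_d$ and of $\|\cdot\|_{\mathrm{op}}$. Write $d=2k$, assume $A\neq0$, and set
\[
x:=\frac{(s_1(A),\ldots,s_n(A))}{\|A\|_{\mathrm{op}}}.
\]
Since $s_1(A)=\|A\|_{\mathrm{op}}$ and $0\le s_i(A)\le s_1(A)$ for all $i$, the vector $x$ has every coordinate in $[0,1]$ and $\|x\|_\infty=1$, so $x\in S_\infty^{n-1}$. As $h_d$ is homogeneous of degree $d$,
\[
\|A\|_{H_d}^{\,d}=h_d\big(s_1(A),\ldots,s_n(A)\big)=\|A\|_{\mathrm{op}}^{\,d}\,h_d(x),
\]
so the whole statement is equivalent, after taking $d$-th roots, to the two-sided bound
\[
\binom{n+2k-1}{2k}\,t_{n,k}^{2k}\ \le\ h_d(x)\ \le\ \binom{n+2k-1}{2k}
\]
for $x$ in the non-negative part of $S_\infty^{n-1}$.

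For the upper inequality I would use that $h_d$ has non-negative coefficients and every $x_i\in[0,1]$, so term by term $h_d(x)\le h_d(1,\ldots,1)=\binom{n+d-1}{d}=\binom{n+2k-1}{2k}$, with equality exactly when all $x_i=1$. Taking $d$-th roots gives $\|A\|_{H_{2k}}\le\binom{n+2k-1}{2k}^{1/2k}\|A\|_{\mathrm{op}}$, with equality precisely when all singular values of $A$ agree, i.e.\ $A$ is a scalar multiple of a unitary; this exhibits the upper constant as optimal.

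For the lower inequality I would invoke Theorem~\ref{thm:min-h-2k-linfty} directly: the global minimum of $h_{2k}$ over the entire sphere $S_\infty^{n-1}$ equals $\binom{n+2k-1}{2k}\,t_{n,k}^{2k}$, attained exactly at the points obtained from $(t_{n,k},\ldots,t_{n,k},1)$ by permutations and the global sign change. Hence $h_d(x)\ge\binom{n+2k-1}{2k}\,t_{n,k}^{2k}$ for our $x$, and since $2k$ is even, $\big(t_{n,k}^{2k}\big)^{1/2k}=|t_{n,k}|$; taking $d$-th roots yields $\binom{n+2k-1}{2k}^{1/2k}|t_{n,k}|\,\|A\|_{\mathrm{op}}\le\|A\|_{H_{2k}}$. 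If equality held, then $h_d(x)$ would equal the global minimum of $h_{2k}$ on $S_\infty^{n-1}$, so $x$ would have to be one of the minimisers just described; since every such minimiser has a strictly negative coordinate while $x\ge0$, equality cannot occur for a genuine matrix. Thus the lower inequality is strict for all $A$, yet the constant $\binom{n+2k-1}{2k}^{1/2k}|t_{n,k}|$ is the sharp value of $\big(\min_{S_\infty^{n-1}}h_d\big)^{1/d}$ produced by the homogeneity reduction, which is the precise sense of optimality recorded in the statement and the reason the extremality condition is phrased at the level of the normalised singular-value vector.

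Given Theorem~\ref{thm:min-h-2k-linfty} the argument is essentially bookkeeping, so there is no real obstacle; the only points needing care are the correct handling of the even exponent ($t_{n,k}^{2k}=|t_{n,k}|^{2k}$) when passing to $d$-th roots, and the precise formulation of optimality, namely distinguishing the sharp constant for the functional inequality on $S_\infty^{n-1}$ from the strictly larger best constant for matrices and noting, via the rigidity in Theorem~\ref{thm:min-h-2k-linfty}, that the former is never attained by an actual singular-value vector.
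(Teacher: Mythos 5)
Your argument follows the paper's route exactly: normalize by $\|A\|_{\mathrm{op}}$ so that $\|A\|_{H_d}^d=\|A\|_{\mathrm{op}}^d\,h_d(x)$ with $x\in S_\infty^{n-1}$, then invoke the extremal values of $h_d$ on that sphere, with the lower value supplied by Theorem~\ref{thm:min-h-2k-linfty}. The bookkeeping (taking $2k$-th roots, handling $t_{n,k}^{2k}=|t_{n,k}|^{2k}$) is done correctly, and your elementary justification of the upper bound via the non-negativity of the coefficients of $h_d$ and $x\in[0,1]^n$ is a clean substitute for the paper's implicit appeal to Schur-convexity. On these points the proposal is faithful to the paper and complete.

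Where you go beyond the paper, and are in fact more careful, is the optimality discussion for the lower bound. You correctly note that the minimiser $(t_{n,k},\ldots,t_{n,k},1)$ has a strictly negative coordinate, so no matrix has it (up to permutation and global sign) as its normalised singular-value vector; hence the lower inequality is \emph{strict} for every $A$, and the constant $\binom{n+2k-1}{2k}^{1/2k}|t_{n,k}|$ is optimal only in the sense that it equals $\big(\min_{S_\infty^{n-1}}h_d\big)^{1/d}$, not as the best constant in the matrix inequality (which would be governed by the infimum of $h_d$ over the non-negative part of $S_\infty^{n-1}$, strictly larger here). The paper's own proof says optimality is ``immediate by evaluating at matrices whose normalised singular-value vector is a minimiser of $h_d$ on $S_\infty^{n-1}$,'' but no such matrix exists; your reading exposes this slip. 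The corollary's stated necessary condition for equality remains technically true, being vacuous, but your formulation is the more accurate one.
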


\begin{proof}
Let $A\in M_n(\mathbb{C})$ and put $x=s(A)/\|A\|_{\mathrm{op}}$. Then
$\|x\|_\infty=1$ and, by definition,
\[
\|A\|_{H_d}^d = h_d\big(s_1(A),\ldots,s_n(A)\big)
= \|A\|_{\mathrm{op}}^d\, h_d(x).
\]
Taking the minimum and maximum of $h_d(x)$ over $\{x\in\mathbb{R}^n:\|x\|_\infty=1\}$
yields the stated inequality. Optimality is
immediate by evaluating at matrices whose normalised singular-value vector is a
minimiser of $h_d$ on $S_\infty^{n-1}$; the structural description of such
minimisers is given by Theorem~\ref{thm:min-h-2k-linfty}.
\end{proof}

\noindent

{\bf Acknowledgements} The  authors acknowledges support by the Hellenic Foundation for Research and Innovation (H.F.R.I.) under the call “Basic research Financing (Horizontal support of all Sciences)” under the National Recovery and Resilience Plan “Greece 2.0” funded by the European Union–NextGeneration EU (H.F.R.I. Project Number:15445).

\bibliographystyle{plain}
\bibliography{citations}

\end{document}